\xpatchcmd{\paragraph}{\normalfont}{{\normalfont\bfseries}}{}{}
\theoremstyle{plain} 
\newtheorem{theorem}{Theorem}[section]
\newtheorem{prop}[theorem]{Proposition}
\newtheorem{lemma}[theorem]{Lemma}
\newtheorem{cor}[theorem]{Corollary}
\newtheorem{conj}[theorem]{Conjecture}
\theoremstyle{remark}
\newtheorem{rmk}{Remark}[section]
\theoremstyle{definition}
\newtheorem{defn}{Definition}[section]
\newtheorem{example}{Example}
\renewcommand{\mod}[1]{{\ifmmode\text{\rm\ (mod~$#1$)}\else\discretionary{}{}{\hbox{ }}\rm(mod~$#1$)\fi}}
\renewcommand{\a}{\alpha}
\renewcommand{\b}{\beta}
\newcommand{\g}{\gamma}
\newcommand{\Z}{\mathbb Z}
\newcommand{\C}{\mathbb C}
\newcommand{\G}{\mathcal G}
\renewcommand{\l}{\lambda}
\DeclareMathOperator{\wt}{wt}
\DeclareMathOperator{\spin}{spin}
\DeclareMathOperator{\SSRT}{SSRT}
\newcolumntype{M}[1]{>{\centering\arraybackslash}m{#1}}
\newcommand{\calL}[0]{\mathcal L}
\newcommand{\calR}[0]{\mathcal R}
\newcommand{\calQ}[0]{\mathcal Q}
\newcommand{\arrowleft}[2]{\filldraw[color=yellow!10!red] (#1-0.1,#2)--(#1+0.1,#2-0.07)--(#1+0.1,#2+0.07)--cycle;}
\newcommand{\arrowdown}[2]{\filldraw[color=black!20!blue] (#1,#2-0.1)--(#1-0.07,#2+0.1)--(#1+0.07,#2+0.1)--cycle;}
\newcommand{\arrowright}[2]{\filldraw[color=black!30!blue] (#1-0.1,#2+0.07)--(#1+0.1,#2)--(#1-0.1,#2-0.07)--cycle;}
\newcommand{\arrowup}[2]{\filldraw[color=yellow!10!red] (#1,#2+0.1)--(#1+0.07,#2-0.1)--(#1-0.07,#2-0.1)--cycle ;}
\newcommand{\arrowleftcolor}[3]{\filldraw[color=#3] (#1-0.09063,#2+0.04226)--(#1+0.05682,#2-0.11477)--(#1+0.12444,#2+0.03024)--cycle;}
\newcommand{\arrowdowncolor}[3]{\filldraw[color=#3] (#1,#2-0.1)--(#1-0.07,#2+0.1)--(#1+0.07,#2+0.1)--cycle;}
\newcommand{\arrowrightcolor}[3]{\filldraw[color=#3] (#1-0.05682,#2+0.11477)--(#1+0.09063,#2-0.04226)--(#1-0.12444,#2-0.03024)--cycle;}
\newcommand{\arrowupcolor}[3]{\filldraw[color=#3] (#1,#2+0.1)--(#1+0.07,#2-0.1)--(#1-0.07,#2-0.1)--cycle ;}
\newcommand{\arrowul}[2]{\filldraw[color=yellow!10!red,draw=none] (#1-0.07071067811,#2+0.07071067811)--(#1+0.1202081528,#2-0.02121320343)--(#1+0.02121320343,#2-0.1202081528);}
\newcommand{\arrowdl}[2]{\filldraw[color=yellow!10!red,draw=none] (#1-0.07071067811,#2-0.07071067811)--(#1+0.1202081528,#2+0.02121320343)--(#1+0.02121320343,#2+0.1202081528);}
\newcommand{\arrowur}[2]{\filldraw[color=black!30!blue,draw=none] (#1+0.07071067811,#2+0.07071067811)--(#1-0.1202081528,#2-0.02121320343)--(#1-0.02121320343,#2-0.1202081528);}
\newcommand{\arrowdr}[2]{\filldraw[color=black!30!blue,draw=none] (#1+0.07071067811,#2-0.07071067811)--(#1-0.1202081528,#2+0.02121320343)--(#1-0.02121320343,#2+0.1202081528);}
\newcommand{\ribbonvertex}[3]{
\coordinate (h#3) at (#1,#2+1);
		\coordinate (t#3) at (#1,#2-1);
		\coordinate (ra#3) at (0.5+#1,#2+3/5);
		\coordinate (rb#3) at (0.5+#1, #2+1/5);
		\coordinate (rc#3) at (0.5+#1, #2-1/5);
		\coordinate (rd#3) at (0.5+#1,#2-3/5);
		\coordinate (la#3) at (-0.5+#1,#2+3/5);
		\coordinate (lb#3) at (-0.5+#1,#2+ 1/5);
		\coordinate (lc#3) at (-0.5+#1, #2-1/5);
		\coordinate (ld#3) at (-0.5+#1,#2-3/5);
	    \draw (h#3)--(t#3);
	    \draw (ld#3) cos (#1,#2) sin (ra#3); 
	    \draw (la#3)--(rb#3);
	    \draw (lb#3)--(rc#3);
	    \draw (lc#3)--(rd#3);
}
\newcommand{\ribbonvertexsix}[3]{
\coordinate (h#3) at (#1,#2+1.3);
		\coordinate (t#3) at (#1,#2-1.3);
		\coordinate (rz#3) at (0.5+#1,#2+1);
		\coordinate (ra#3) at (0.5+#1,#2+3/5);
		\coordinate (rb#3) at (0.5+#1, #2+1/5);
		\coordinate (rc#3) at (0.5+#1, #2-1/5);
		\coordinate (rd#3) at (0.5+#1,#2-3/5);
		\coordinate (re#3) at (0.5+#1,#2-1);
		\coordinate (lz#3) at (-0.5+#1,#2+1);
		\coordinate (la#3) at (-0.5+#1,#2+3/5);
		\coordinate (lb#3) at (-0.5+#1,#2+ 1/5);
		\coordinate (lc#3) at (-0.5+#1, #2-1/5);
		\coordinate (ld#3) at (-0.5+#1,#2-3/5);
		\coordinate (le#3) at (-0.5+#1,#2-1);
	    \draw (h#3)--(t#3);
	    \draw (le#3) cos (#1,#2) sin (rz#3); 
	    \draw (lz#3)--(ra#3);
	    \draw (la#3)--(rb#3);
	    \draw (lb#3)--(rc#3);
	    \draw (lc#3)--(rd#3);
	    \draw (ld#3)--(re#3);
}
\newcommand{\ribbonarrow}[3]{

}
\newcommand{\goodlookingvertex}[3]{
\coordinate (h#3) at (#1,#2+1);
		\coordinate (t#3) at (#1,#2-1);
		\coordinate (ra#3) at (#1+0.67,#2+0.67);
		\coordinate (rb#3) at (#1+0.67, #2+0.33);
		\coordinate (rc#3) at (#1+0.67, #2);
		\coordinate (rd#3) at (#1+0.67,#2-0.33);
		\coordinate (re#3) at (#1+0.67,#2-0.67);
		\coordinate (la#3) at (#1-0.67,#2+0.67);
		\coordinate (lb#3) at (#1-0.67, #2+0.33);
		\coordinate (lc#3) at (#1-0.67, #2);
		\coordinate (ld#3) at (#1-0.67,#2-0.33);
		\coordinate (le#3) at (#1-0.67,#2-0.67);
	    \draw [thick](h#3)--(t#3);
	    \draw (le#3) cos (#1,#2) sin (ra#3); 
	    \draw (la#3)--(rb#3);
	    \draw [dotted] (lb#3)--(rc#3);
	    \draw (lc#3)--(rd#3);
	    \draw (ld#3)--(re#3);
}
\newcommand{\spiderR}[3]{
\coordinate(0#3) at (#1,0+#2);
\coordinate (1#3) at (#1+1.2,0.4+#2);
	\coordinate (2#3) at (#1+1.2,0.8+#2);
	\coordinate (3#3) at (#1+1.2,1.2+#2);
	\coordinate (a#3) at (#1+1.2,1.6+#2);
	\coordinate (b#3) at (#1-1.2,1.6+#2);
	\coordinate (4#3) at (-1.2+#1,1.2+#2);
	\coordinate (5#3) at (-1.2+#1,0.8+#2);
	\coordinate (6#3) at (-1.2+#1,0.4+#2);
	\coordinate (7#3) at (-1.2+#1,-0.4+#2);
	\coordinate (8#3) at (-1.2+#1,-0.8+#2);
	\coordinate (9#3) at (-1.2+#1,-1.2+#2);
	\coordinate (c#3) at (#1-1.2,-1.6+#2);
	\coordinate (d#3) at (#1+1.2,-1.6+#2);
	\coordinate (10#3) at (1.2+#1,-1.2+#2);
	\coordinate (11#3) at (1.2+#1,-0.8+#2);
	\coordinate (12#3) at (1.2+#1,-0.4+#2);
	\draw (1#3)--(c#3);
	\draw (2#3)--(9#3);
	\draw (3#3)--(8#3);
	\draw (4#3)--(11#3);
	\draw (5#3)--(10#3);
	
	\draw (a#3)--(7#3);
	\draw (6#3)--(d#3);
	\draw (b#3)--(12#3);
	}
\newcommand{\ybevertexl}[0]{
\node at (-2.3,1.6) {$b_1$};
		\node at (-2.3,1.2) {$b_2$};
		\node at (-2.3,0.8) {$\cdots$};
		\node at (-2.3,-1.2) {$\cdots$};
		\node at (-2.3,-0.8) {$c_2$};
		\node at (-2.3,-0.4) {$c_1$};
		\node at (1.65,-1.6) {$\beta_n$};
		\node at (1.65,-1.2) {$\cdots$};
		\node at (1.65,-0.8) {$\beta_2$};
		\node at (1.65,1.2) {$\gamma_2$};
		\node at (1.65,0.8) {$\cdots$};
		\node at (1.65,0.4) {$\gamma_n$};
		\node [fill=white, inner sep= .1]at (-0.9,1.18) {$b_1$};
		\node [fill=white, inner sep= .2]at (-0.94,0.8) {$b_2$};
		\node [fill=white, inner sep= .1]at (-0.94,0.4) {$\cdots$};
		\node [fill=white, inner sep= .2]at (-0.9,-1.6) {$\cdots$};
		\node [fill=white, inner sep= .3]at (-0.9,-1.2) {$c_2$};
		\node [fill=white, inner sep= .3]at (-0.9,-0.8) {$c_1$};
}
\newcommand{\ybevertexr}[0]{
\node at (2.3,-1.6) {$\beta_n$};
		\node at (2.3,-1.2) {$\cdots$};
		\node at (2.3,-0.8) {$\beta_2$};
        \node at (2.3,1.2) {$\gamma_2$};
		\node at (2.3,0.8) {$\cdots$};
		\node at (2.3,0.4) {$\gamma_n$};
		\node [fill=white, inner sep= .3]at (0.93,1.6) {$\gamma_2$};
		\node [fill=white, inner sep= .1]at (0.95,1.2) {$\cdots$};
		\node [fill=white, inner sep= .3]at (0.93,0.8) {$\gamma_n$};
		\node [fill=white, inner sep= .3]at (0.94,-1.16) {$\beta_n$};
		\node [fill=white, inner sep= .1]at (0.94,-0.8) {$\cdots$};
		\node [fill=white, inner sep= .3]at (0.94,-0.36) {$\beta_2$};
		\node at (-1.65,-1.2) {$\cdots$};
		\node at (-1.65,-0.8) {$c_2$};
		\node at (-1.65,-0.4) {$c_1$};
		\node at (-1.65,1.6) {$b_1$};
		\node at (-1.65,1.2) {$b_2$};
		\node at (-1.65,0.8) {$\cdots$};
}
\newcommand{\lybearrows}[6]{
\ifthenelse{#1=1}{\arrowup{-1.5}{2}}{\arrowdown{-1.5}{2}}
\ifthenelse{#2=1}{\arrowleft{-2.1}{0.4}}{\arrowright{-2.1}{0.4}}
\ifthenelse{#3=1}{\arrowleft{-2.1}{-1.6}}{\arrowright{-2.1}{-1.6}}
\ifthenelse{#4=1}{\arrowup{-1.5}{-2}}{\arrowdown{-1.5}{-2}}
\ifthenelse{#5=1}{\arrowul{1.42}{-0.42}}{\arrowdr{1.45}{-0.45}}
\ifthenelse{#6=1}{\arrowdl{1.33}{1.55}}{\arrowur{1.345}{1.55}}
}
\newcommand{\lybearrowsin}[3]{
\ifthenelse{#1=1}{\arrowul{-0.943}{1.541}}{\arrowdr{-0.943}{1.557}}
\ifthenelse{#2=1}{\arrowup{-1.5}{0}}{\arrowdown{-1.5}{0}}
\ifthenelse{#3=1}{\arrowdl{-0.94}{1.55-1.89}}{\arrowur{-0.94}{1.55-1.91}}
}
\newcommand{\rybearrows}[6]{
\ifthenelse{#1=1}{\arrowup{1.5}{2}}{\arrowdown{1.5}{2}}
\ifthenelse{#2=1}{\arrowul{-1.42}{-0.42+0.84}}{\arrowdr{-1.38}{-0.45+0.84}}
\ifthenelse{#3=1}{\arrowdl{-1.35}{-1.55}}{\arrowur{-1.345}{-1.56}}
\ifthenelse{#4=1}{\arrowup{1.5}{-2}}{\arrowdown{1.5}{-2}}
\ifthenelse{#5=1}{\arrowleft{2.1}{-1.6+1.2}}{\arrowright{2.1}{-1.6+1.2}}
\ifthenelse{#6=1}{\arrowleft{2.1}{0.4+1.2}}{\arrowright{2.1}{0.4+1.2}}
}
\newcommand{\rybearrowsin}[3]{
\ifthenelse{#1=1}{\arrowdl{0.943}{1.556-1.2}}{\arrowur{0.935}{0.338}}
\ifthenelse{#2=1}{\arrowup{1.5}{0}}{\arrowdown{1.5}{0}}
\ifthenelse{#3=1}{\arrowul{0.94}{1.536-1.89-1.2}}{\arrowdr{0.935}{1.57-1.91-1.2}}
}
\newcommand{\ybeleft}[9]{
\tikz[baseline=.1ex,scale=1]{
		\spiderR{0.2}{0}{1}
		\ribbonvertex{-1.5}{1}{1}
		\ribbonvertex{-1.5}{-1}{1}
		\lybearrows{#1}{#2}{#3}{#4}{#5}{#6}
		\lybearrowsin{#7}{#8}{#9}
		\ybevertexl
		\node [fill=white, inner sep = 1]at (0.2,0){$\calL$};
		\node [fill=white, inner sep = 1]at (-1.5,1) {$v_j$};
		\node [fill=white, inner sep = 1]at (-1.5,-1) {$v_i$};
		}
		}
\newcommand{\ybeleftb}[9]{
\tikz[baseline=.1ex,scale=1]{
		\spiderR{0.2}{0}{1}
		\ribbonvertex{-1.5}{1}{1}
		\ribbonvertex{-1.5}{-1}{1}
		\lybearrows{#1}{#2}{#3}{#4}{#5}{#6}
		\lybearrowsin{#7}{#8}{#9}
		\ybevertexl
		\node [fill=white, inner sep = 1]at (0.2,0){$\calK$};
		\node [fill=white, inner sep = 1]at (-1.5,1) {$u_j$};
		\node [fill=white, inner sep = 1]at (-1.5,-1) {$u_i$};
		}
		}
\newcommand{\yberight}[9]{
\tikz[baseline=.1ex,scale=1]{
		\spiderR{-0.2}{0}{1}
		\ribbonvertex{1.5}{1}{1}
		\ribbonvertex{1.5}{-1}{1}
		\rybearrows{#1}{#2}{#3}{#4}{#5}{#6}
		\rybearrowsin{#7}{#8}{#9}
		\ybevertexr
		\node [fill=white, inner sep = 1]at (-0.2,0){$\calR$};
		\node [fill=white, inner sep = 1]at (1.5,1) {$w_i$};
		\node [fill=white, inner sep = 1]at (1.5,-1) {$w_j$};
		}
		}
\newcommand{\yberightb}[9]{
\tikz[baseline=.1ex,scale=1]{
		\spiderR{-0.2}{0}{1}
		\ribbonvertex{1.5}{1}{1}
		\ribbonvertex{1.5}{-1}{1}
		\rybearrows{#1}{#2}{#3}{#4}{#5}{#6}
		\rybearrowsin{#7}{#8}{#9}
		\ybevertexr
		\node [fill=white, inner sep = 1]at (-0.2,0){$\calQ$};
		\node [fill=white, inner sep = 1]at (1.5,1) {$y_i$};
		\node [fill=white, inner sep = 1]at (1.5,-1) {$y_j$};
		}
		}
\newcommand{\ybeleftn}[9]{
\tikz[baseline=.1ex,scale=0.8]{
		\spiderR{0.2}{0}{1}
		\ribbonvertex{-1.5}{1}{1}
		\ribbonvertex{-1.5}{-1}{1}
		\lybearrows{#1}{#2}{#3}{#4}{#5}{#6}
		\lybearrowsin{#7}{#8}{#9}
		}
		}
\newcommand{\yberightn}[9]{
\tikz[baseline=.1ex,scale=0.8]{
		\spiderR{-0.2}{0}{1}
		\ribbonvertex{1.5}{1}{1}
		\ribbonvertex{1.5}{-1}{1}
		\rybearrows{#1}{#2}{#3}{#4}{#5}{#6}
		\rybearrowsin{#7}{#8}{#9}
		}
		}
\newcommand{\north}{
\begin{tikzpicture}
\draw [Latex-,thick](0,1) -- (1,0);
\draw [-Latex,thick](0,0) -- (1,1);
\draw [-Latex,thick](-0.05,-0.05) -- (0.25,0.25);
\draw [thick](1.05,1.05) -- (0.75,0.75);
\draw [thick](-0.05,1.05) -- (0.25,0.75);
\draw [-Latex,thick](1.05,-0.05) -- (0.75,0.25);
\end{tikzpicture} 
}
\newcommand{\southsouth}{
\begin{tikzpicture}
\draw [thick](0,1) -- (1,0);
\draw [Latex-Latex,thick](0,0) -- (1,1);
\draw [thick](-0.05,-0.05) -- (0.25,0.25);
\draw [thick](1.05,1.05) -- (0.75,0.75);
\draw [-Latex,thick](-0.05,1.05) -- (0.25,0.75);
\draw [-Latex,thick](1.05,-0.05) -- (0.75,0.25);
\end{tikzpicture}}
\newcommand{\west}{
\begin{tikzpicture}
\draw [Latex-, thick](0,1) -- (1,0);
\draw [Latex-,thick](0,0) -- (1,1);
\draw [thick](-0.05,-0.05) -- (0.25,0.25);
\draw [-Latex, thick](1.05,1.05) -- (0.75,0.75);
\draw [thick](-0.05,1.05) -- (0.25,0.75);
\draw [-Latex,thick](1.05,-0.05) -- (0.75,0.25);
\end{tikzpicture}}
\newcommand{\east}{
\begin{tikzpicture}
\draw [-Latex, thick](0,1) -- (1,0);
\draw [-Latex,thick](0,0) -- (1,1);
\draw [-Latex,thick](-0.05,-0.05) -- (0.25,0.25);
\draw [thick](1.05,1.05) -- (0.75,0.75);
\draw [-Latex,thick](-0.05,1.05) -- (0.25,0.75);
\draw [thick](1.05,-0.05) -- (0.75,0.25);
\end{tikzpicture}}
\newcommand{\northnorth}{
\begin{tikzpicture}
\draw [Latex-Latex, thick](0,1) -- (1,0);
\draw [thick](0,0) -- (1,1);
\draw [-Latex,thick](-0.05,-0.05) -- (0.25,0.25);
\draw [-Latex, thick](1.05,1.05) -- (0.75,0.75);
\draw [thick](-0.05,1.05) -- (0.25,0.75);
\draw [thick](1.05,-0.05) -- (0.75,0.25);
\end{tikzpicture}
}
\newcommand{\south}{
\begin{tikzpicture}
\draw [-Latex, thick](0,1) -- (1,0);
\draw [Latex-, thick](0,0) -- (1,1);
\draw [thick](-0.05,-0.05) -- (0.25,0.25);
\draw [-Latex, thick](1.05,1.05) -- (0.75,0.75);
\draw [-Latex,thick](-0.05,1.05) -- (0.25,0.75);
\draw [thick](1.05,-0.05) -- (0.75,0.25);
\end{tikzpicture}
}
\newcommand{\northp}{
\begin{tikzpicture}
\draw [{Latex[color=red]}-,thick, red](0,1) -- (1,0);
\draw [-{Latex[color=red]},thick, red](0,0) -- (1,1);
\draw [-{Latex[color=red]},thick,red](-0.05,-0.05) -- (0.25,0.25);
\draw [thick,red](1.05,1.05) -- (0.75,0.75);
\draw [thick,red](-0.05,1.05) -- (0.25,0.75);
\draw [-{Latex[color=red]},thick,red](1.05,-0.05) -- (0.75,0.25);
\end{tikzpicture}
}
\newcommand{\southsouthp}{
\begin{tikzpicture}
\draw [thick](0,1) -- (0.5,0.5);
\draw [thick,red] (0.5,0.5) -- (1,0);
\draw [Latex-,thick] (0,0) -- (0.5,0.5);
\draw[-{Latex[color=red]},thick,red](0.5,0.5) -- (1,1);
\draw [thick](-0.05,-0.05) -- (0.25,0.25);
\draw [thick,red](1.05,1.05) -- (0.75,0.75);
\draw [-Latex,thick](-0.05,1.05) -- (0.25,0.75);
\draw [-{Latex[color=red]},thick,red](1.05,-0.05) -- (0.75,0.25);
\end{tikzpicture}
}
\newcommand{\westp}{
\begin{tikzpicture}
\draw [{Latex[color=red]}-, thick,red](0,1) -- (1,0);
\draw [Latex-,thick](0,0) -- (1,1);
\draw [thick](-0.05,-0.05) -- (0.25,0.25);
\draw [-Latex, thick](1.05,1.05) -- (0.75,0.75);
\draw [thick,red](-0.05,1.05) -- (0.25,0.75);
\draw [-{Latex[color=red]},thick,red](1.05,-0.05) -- (0.75,0.25);
\end{tikzpicture}}
\newcommand{\eastp}{
\begin{tikzpicture}
\draw [-Latex, thick](0,1) -- (1,0);
\draw [-{Latex[color=red]},thick,red](0,0) -- (1,1);
\draw [-{Latex[color=red]},thick,red](-0.05,-0.05) -- (0.25,0.25);
\draw [thick,red](1.05,1.05) -- (0.75,0.75);
\draw [-Latex,thick](-0.05,1.05) -- (0.25,0.75);
\draw [thick](1.05,-0.05) -- (0.75,0.25);
\end{tikzpicture}}
\newcommand{\northnorthp}{
\begin{tikzpicture}
\draw [{Latex[color=red]}-,thick,red] (0,1) -- (0.5,0.5);
\draw [-Latex, thick](0.5,0.5) -- (1,0);
\draw [thick,red](0,0) -- (0.5,0.5);
\draw [thick](0.5,0.5) -- (1,1);
\draw [-{Latex[color=red]},thick,red](-0.05,-0.05) -- (0.25,0.25);
\draw [-Latex, thick](1.05,1.05) -- (0.75,0.75);
\draw [thick,red](-0.05,1.05) -- (0.25,0.75);
\draw [thick](1.05,-0.05) -- (0.75,0.25);
\end{tikzpicture}
}
\newcommand{\southp}{
\begin{tikzpicture}
\draw [-Latex, thick](0,1) -- (1,0);
\draw [Latex-, thick](0,0) -- (1,1);
\draw [thick](-0.05,-0.05) -- (0.25,0.25);
\draw [-Latex, thick](1.05,1.05) -- (0.75,0.75);
\draw [-Latex,thick](-0.05,1.05) -- (0.25,0.75);
\draw [thick](1.05,-0.05) -- (0.75,0.25);
\end{tikzpicture}
}
\newcommand{\calK}[0]{\mathcal{K}}
\newcommand\Top{\rule{0pt}{2.6ex}}       
\newcommand\Bottom{\rule[-1.2ex]{0pt}{0pt}} 
\begin{document}

\title{A Lattice Model for super LLT Polynomials}
\author[M. J. Curran]{Michael J. Curran}
\address{Mathematical Institute, University of Oxford, Oxford, OX2 6GG, United Kingdom.}
\email{michael.curran@maths.ox.ac.uk} 

\author[C. Frechette]{Claire Frechette}
\address{School of Mathematics, University of Minnesota, Minneapolis, MN 55455, United States}
\email{frech014@umn.edu}

\author[C. Yost--Wolff]{Calvin Yost--Wolff}
\address{}
\email{}

\author[S. W. Zhang]{Sylvester W. Zhang}
\address{School of Mathematics, University of Minnesota, Minneapolis, MN 55455, United States.}
\email{swzhang@umn.edu}

\author[V. Zhang]{Valerie Zhang}
\address{}
\email{}

\date{} 
\maketitle
\thispagestyle{empty}

\begin{abstract}
We introduce a solvable lattice model for supersymmetric LLT polynomials, also known as \emph{super LLT polynomials}, based upon particle interactions in super $n$-ribbon tableaux. Using operators on a Fock space, we prove a Cauchy identity for super LLT polynomials, simultaneously generalizing the Cauchy and dual Cauchy identities for LLT polynomials. Lastly, we construct a solvable semi-infinite Cauchy lattice model with a surprising Yang-Baxter equation and examine its connections to the Cauchy identity.
\end{abstract}


\section{Introduction}

LLT polynomials, also known as \emph{ribbon functions}, are a $q$-analog of products of Schur functions and were first introduced by Lascoux, Leclerc, and Thibon in \cite{LLT97} as a family of  polynomials realized as generating functions over $n$-ribbon tableaux. Like many other families of functions which originated over tableaux, LLT polynomials have since been shown to satisfy certain nice properties:
\begin{itemize}
    \item they are symmetric,
    \item they may be written as operators on a Fock space representation, in this case that of the quantum group $U_q(\mathfrak{sl}_n)$,
    \item they satisfy Cauchy and Dual Cauchy identities, and
    \item they may be realized as the partition function of a lattice model.
\end{itemize}

The Cauchy and Dual Cauchy identities were first proved by Thomas Lam in \cite{lam-ribbon-heisenberg}, in which Lam also constructed a supersymmetric analogue of LLT polynomials, called \emph{superLLT polynomials}, defined as a generating function over super $n$-ribbon tableaux. By adding an extra set of parameters, super LLT polynomials enable us to track vertical and horizontal information about a tableaux at the same time, corresponding to considering two different types of operators on the Fock space. In this paper, we prove a general Cauchy/Dual Cauchy identity for the super LLT polynomials, making a supersymmetric analogue of the approach using in \cite{LamBF} to prove general Cauchy identities for symmetric polynomials using Heisenberg algebras. In fact, we find that the Cauchy identity for super LLT polynomials specializes to both the Cauchy and Dual Cauchy identities for LLT polynomials. This remarkable property comes from the fact that the super LLT polynomials may be specialized to LLT polynomials in two separate ways, corresponding to the relationship between a partition and its conjugate, or equivalently to the duality between horizontal and vertical ribbon strips. Consequently, a Dual Cauchy identity for super LLT polynomials is merely a rephrasing of the Cauchy identity. Our Cauchy identity also generalizes the Cauchy identity for metaplectic symmetric functions proven by Brubaker, Buciumas, Bump, and Gustafsson in \cite{BBBG}.

A lattice model for LLT polynomials was first developed by Curran, Yost-Wolff, Zhang, and Zhang \cite{LLTREU} as part of the 2019 REU at the University of Minnesota, using the definition over $n$-ribbon tableaux. This model was proven solvable for n=1,2,3, and was conjectured to be solvable for all $n$. Independently, Corteel, Gitlin, Keating, Meza \cite{CorteelCo} developed a lattice model for coinversion LLT polynomials using an alternate definition as generating polynomials over tuples of skew tableaux. These tuples are related to ribbon tableaux by a weight-preserving bijection developed by Stanton and White \cite{stantonwhite}. Since the coinversion LLT polynomials give the LLT polynomials up to a correction factor of $q$ depending only on the shape $\lambda/\mu$, these lattice models are equivalent under this bijection up to that factor of $q$, but the underlying structures of each reveal different properties of the polynomials. Later, Aggarwal, Borodin and Wheeler \cite{ABW} developed a general lattice model that specializes to both of the above models, which they use to reprove many interesting combinatorial identities satisfied by these and similar families of symmetric polynomials.

In this paper, we generalize the Curran-Yost-Wolff-Zhang-Zhang model to a novel lattice model whose partition function produces the super LLT polynomials. We then prove that this super model is solvable, thus covering the solvability conjecture of \cite{LLTREU}, and use this solvability to provide a lattice model proof of certain symmetry properties of the super LLT polynomials. Recently, Gitlin and Keating \cite{GitlinKeating} have independently generalized the coinversion LLT model of \cite{CorteelCo} to produce coinversion supersymmetric LLT polynomials; see section 5 of \cite{GitlinKeating} for a discussion of the relationship between our supersymmetric models.

In Section \ref{sec:tableaux}, we recall the details of the definition of super LLT polynomials as a generating function over $n$-ribbon tableaux. In Section \ref{sec:heisenberg}, we reformulate these polynomials in terms of operators generated from a Heisenberg algebra acting on a Fock space and prove Cauchy and Dual Cauchy identities of super LLT polynomials using these additional operators. Changing gears, Section \ref{sec:latticemodel} explains the (super)ribbon lattice model and proves that its partition function gives the super LLT polynomial. We then show in Section \ref{sec:solvability} that this model is solvable for all $n$ and use this solvability to reprove interesting symmetry relations on the super LLT polynomials. Finally, Section \ref{sec:cauchyish} examines the implications of this model for Cauchy identities on semi-infinite lattices.

\subsection*{Acknowledgements}

This research project was started as part of the 2019 Combinatorics REU program at the School of Mathematics of the University of Minnesota - Twin Cities, supported by NSF RTG grant DMS-1148634. The authors would like to thank Ben Brubaker and Katy Weber for their mentorship and guidance.

\section{Background on Tableaux and Symmetric Functions}\label{sec:tableaux}

Like many other interesting symmetric functions, super LLT polynomials arise as generating functions over a specific class of tableaux. 

A \emph{partition} $\l$ is any decreasing sequence $\l = (\l_1, \l_2, \ldots, \l_r)$ of nonnegative integers. We denote the \textit{length}, or number of parts, of $\l$ by $\ell(\l)$ and  the \textit{size} of $\l$ by $| \l | = \l_1 + \l_2 + \ldots + \l_r$. We will occasionally use the notation $\lambda \vdash n$ to say $\lambda$ partitions $n$. A \emph{composition} $\a = (\a_1,...,\a_r)$ is a non-ordered list of nonnegative integers; it is important not to confuse these with partitions. We will also define $m_k(\lambda)$ to be the number of parts $\lambda_i = k$ and set $\lambda'$ to be the conjugate partition of $\lambda$.

A partition $\l = (\l_1, \l_2, \ldots, \l_r)$  can be visualized by its \emph{Young diagram}, which consists of horizontal boxes arranged in left-justified rows, where  the $i^\text{th}$ row contains $\l_i$ boxes; abusing notation, we will frequently conflate a partition with its Young diagram. If $\mu \subset \l$ for two partitions $\l,\mu$, that is if $\mu_i\leq \l_i$ for all $i$, then $\l/\mu$ is a \textit{skew shape} or \emph{skew partition} with size $|\l/\mu| = |\l| - |\mu|.$ We will identify a partition $\l$ with the skew shape $\l/\emptyset$, and will also occasionally identify partitions that differ only by a sequence of parts $\lambda_i = 0$ when it should not cause any confusion. 

Fillings of the Young diagram according to a given alphabet are called \emph{tableaux}: there are many different kinds of tableaux, each defined by different rules for how one fills the shape. In this paper, we focus on \emph{ribbon tableaux}, first defined by Stanton and White in \cite{stantonwhite}. Let $n$ be a fixed positive integer. An \emph{$n$-ribbon} is a skew shape $r$ containing $n$ boxes that is connected and contains no 2 by 2 squares. The \textit{spin} of a ribbon $r$ is defined to be the height of $r$ minus one, and is denoted $\spin(r).$ 

\begin{example} Of the six arrangements possible of 3 boxes, four of them are 3-ribbons, which we see below labelled by their spin. The last two are not $3$-ribbons because they cannot be realized as skew shapes $\l/\mu$. 
\begin{center}
\begin{tabular}{|cccc|cc|}
\hline
\multicolumn{4}{|c|}{$3$-ribbons} & \multicolumn{2}{c|}{non-ribbons}
\\
\hline
&&&&&\\
  \begin{tikzpicture}[scale=0.5]
  	\foreach \i in {0,...,5}{
  	\foreach \j in {0,...,5}{
  	\coordinate (\i\j) at (\i,\j);
  	}
  	}
  	\draw (00)--(10)--(13)--(03)--(00);
  \end{tikzpicture} &
  \begin{tikzpicture}[scale=0.5]
  	\foreach \i in {0,...,5}{
  	\foreach \j in {0,...,5}{
  	\coordinate (\i\j) at (\i,\j);
  	}
  	}
  	\draw (01)--(11)--(12)--(22)--(23)--(03)--cycle;
  \end{tikzpicture}&
  \begin{tikzpicture}[scale=0.5,rotate=90]
  	\foreach \i in {0,...,5}{
  	\foreach \j in {0,...,5}{
  	\coordinate (\i\j) at (\i,\j);
  	}
  	}
  	\draw (00)--(10)--(13)--(03)--(00);
  \end{tikzpicture} 
  &
  \begin{tikzpicture}[scale=0.5,rotate=180]
  	\foreach \i in {0,...,5}{
  	\foreach \j in {0,...,5}{
  	\coordinate (\i\j) at (\i,\j);
  	}
  	}
  	\draw (01)--(11)--(12)--(22)--(23)--(03)--cycle;
  \end{tikzpicture}
  & 
   \begin{tikzpicture}[scale=0.5,rotate=-90]
  	\foreach \i in {0,...,5}{
  	\foreach \j in {0,...,5}{
  	\coordinate (\i\j) at (\i,\j);
  	}
  	}
  	\draw (01)--(11)--(12)--(22)--(23)--(03)--cycle;
  \end{tikzpicture}&
   \begin{tikzpicture}[scale=0.5,rotate=90]
  	\foreach \i in {0,...,5}{
  	\foreach \j in {0,...,5}{
  	\coordinate (\i\j) at (\i,\j);
  	}
  	}
  	\draw (01)--(11)--(12)--(22)--(23)--(03)--cycle;
  \end{tikzpicture}
  	
    \\
  2 & 1& 0& 1&&\\
  \hline
  
\end{tabular}
\end{center}
\end{example}

Given a skew partition $\l/\mu$, a tiling of $\l/\mu$ by $n$-ribbons is called a \emph{$n$-horizontal strip} if
the top-right-most square of each ribbon touches the northern boundary of $\l/\mu$ (\cref{fig:h-strip-example}). A tiling is called a \emph{n-vertical strip} if the bottom-left-most square of each ribbon lies along the northern boundary of $\lambda/\mu$. Equivalently, a vertical strip is a collection of ribbons that, when collectively flipped over the $y=-x$ antidiagonal, produces a $n$-horizontal strip.

\begin{figure}[h!]
\centering
\begin{tikzpicture}[scale=0.7]
	\draw (0,0)--(3,0)--(3,1)--(4,1)--(4,2)--(1,2)--(1,1)--(0,1)--cycle;
	\draw (2,0)--(2,2);
	\draw [green,thick] (1.5,1.5) circle (0.3); 
	\draw [green,thick] (3.5,1.5) circle (0.3);
\end{tikzpicture}
	\quad\quad\quad
\begin{tikzpicture}[scale=0.7]
	\draw (0,0)--(3,0)--(3,1)--(4,1)--(4,2)--(1,2)--(1,1)--(0,1)--cycle;
	\draw (1,1)--(3,1);
	\draw [red,thick] (2.3,0.7)--(2.7,0.3);
	\draw [red,thick] (2.3,0.3)--(2.7,0.7);
	\draw [green,thick] (3.5,1.5) circle (0.3);
\end{tikzpicture}
	\quad\quad\quad
\begin{tikzpicture}[scale=0.7]
	\draw (0,0)--(0,3)--(1,3)--(1,4)--(2,4)--(2,1)--(1,1)--(1,0)--cycle;
	\draw (0,2)--(2,2);
	\draw [green,thick] (0.5,0.5) circle (0.3); 
	\draw [green,thick] (0.5,2.5) circle (0.3);
\end{tikzpicture}
\quad\quad\quad
\begin{tikzpicture}[scale=0.7]
	\draw (0,0)--(0,3)--(1,3)--(1,4)--(2,4)--(2,1)--(1,1)--(1,0)--cycle;
	\draw (1,1)--(1,3);
	\draw [green,thick] (0.5,0.5) circle (0.3);
	\draw [red,thick] (1.3,1.7)--(1.7,1.3);
	\draw [red,thick] (1.3,1.3)--(1.7,1.7);
\end{tikzpicture}
\caption{From left to right: a $3$-horizontal strip, a non-example of a horizontal strip, a $3$-vertical strip, and a non-example of a vertical strip.}
\label{fig:h-strip-example}
\end{figure}

A tiling of $\lambda/\mu$ by $n$-ribbons labelled with positive integers is called a \emph{ribbon tableau}. Given such a tableau, we can define a sequence of partitions $\mu = \l^0 \subset \l^1 \subset \cdots \subset \l^{\ell} = \l$, where $\l_i$ consists of the subshape of the tableau with labels less than or equal to $i$.

\begin{defn}
A \textit{semistandard $n$-ribbon tableau} (SSRT) of skew shape $\l/\mu$ is a tiling of $\l/\mu$ with $n$-ribbons such that the induced sequence $\mu = \l^0 \subset \l^1 \subset \cdots \subset \l^{\ell} = \l$ has the property that for each $i$ the skew shape $\l_i/\l_{i-1}$ is a {horizontal $n$-ribbon strip}.
 We define the \textit{weight} of such a tableau to be  the composition $\wt(T)$ such that $$\wt(T)_i = \#\{ n\text{-ribbons labelled }i\text{ in }T\}$$ and we define the \textit{spin} of $T$ to be the sum of the spins of the $n$-ribbon tiles of $T$. The set of all semistandard $n$-ribbon tableaux is denoted by $\SSRT_n(\l)$. Note that is not always possible to tile a given partition $\l$ with $n$-ribbons, and we will restrict our attention to partitions or skew shapes that are tileable by $n$-ribbons.
\end{defn}

We can now define the polynomials formerly known as \emph{$n$-ribbon Schur functions}, first introduced by Lascoux, Leclerc, and Thibon in \cite{LLT97}.
\begin{defn}
Let $n \geq 1$ be fixed and $\l/\mu$ a skew shape tileable by $n$-ribbons. Then the \textit{($n$-)LLT polynomial of shape $\l/\mu$} is defined as the generating function
\begin{equation}
    \G_{\l/\mu}^{(n)}(X;q) = \sum_{T \in \SSRT_n(\l/\mu)} q^{\spin(T)} x^{\wt(T)}.
\end{equation}
\end{defn}

\begin{example}\label{exampleLLT}
For example, suppose that $\l = (3,3)$ and $n = 2$. The only possible 2-ribbon tableaux with shape $\l$ are those depicted below.

\begin{center}
\begin{tikzpicture}[scale = 0.5]
\draw (0,0)--(0,2)--(3,2)--(3,0)--cycle;
    \draw (1,0)--(1,2);
    \draw (2,0)--(2,2);
    \draw (0.5,1) node {$1$};
    \draw (1.5,1) node {$1$};
    \draw (2.5,1) node {$1$};
    
\draw (3.5,0)--(3.5,2)--(6.5,2)--(6.5,0)--cycle;
    \draw (4.5,0)--(4.5,2);
    \draw (5.5,0)--(5.5,2);
    \draw (4,1) node {$1$};
    \draw (5,1) node {$1$};
    \draw (6,1) node {$2$};
    
\draw (7,0)--(7,2)--(10,2)--(10,0)--cycle;
    \draw (8,0)--(8,2);
    \draw (9,0)--(9,2);
    \draw (7.5,1) node {$1$};
    \draw (8.5,1) node {$2$};
    \draw (9.5,1) node {$2$};
    
\draw (10.5,0)--(10.5,2)--(13.5,2)--(13.5,0)--cycle;
    \draw (11.5,0)--(11.5,2);
    \draw (12.5,0)--(12.5,2);
    \draw (11,1) node {$2$};
    \draw (12,1) node {$2$};
    \draw (13,1) node {$2$};
    
\draw (14,0)--(14,2)--(17,2)--(17,0)--cycle;
    \draw (15,0)--(15,2);
    \draw (15,1)--(17,1);
    \draw (14.5,1) node {$1$};
    \draw (16,1.5) node {$1$};
    \draw (16,0.5) node {$2$};
    
\draw (17.5,0)--(17.5,2)--(20.5,2)--(20.5,0)--cycle;
    \draw (19.5,0)--(19.5,2);
    \draw (17.5,1)--(19.5,1);
    \draw (18.5,1.5) node {$1$};
    \draw (18.5,0.5) node {$2$};
    \draw (20,1) node {$2$};
\end{tikzpicture}
\end{center}
Then,
\begin{equation*} 
    \G_{(3,3)}^{(2)}(x_1, x_2; q) = q^3({x_1}^3 + {x_1}^2x_2 + x_1{x_2}^2 + {x_2}^3) + q({x_1}^2x_2 + x_1{x_2}^2).
\end{equation*}
\end{example}

\noindent We will generally write $\G_{\l/\mu}^{(n)}(X;q) = \G_{\l/\mu}(X;q)$ when $n$ is understood to be fixed. Note: the LLT polynomials are $q$-analogues of the Schur functions in the sense that $\G_{\l/\mu}(X;1)$ is equal to a product of $n$-Schur functions, as proven in \cite{LLT97}.

The central object of this paper is the supersymmetric generalization of the LLT polynomials, known as \emph{super LLT polynomials}. As with LLT polynomials, these new polynomials may be expressed as a generating function over tableaux, in this case \emph{super ribbon tableaux}, which were first defined in Lam in \cite{lam-ribbon-heisenberg}.

\begin{defn}
Choose a total order $\prec$ on two ordered alphabets $A, A'$. Let a \emph{super $n$-ribbon tableau} of shape $\lambda/\mu$ be a tiling by $n$-ribbons labeled in the alphabets $A$ and $A'$ such that the ribbon shapes labeled by $a \in A$ form horizontal ribbon strips and those labeled by $b \in A'$ form vertical ribbon strips. Furthermore, we require that the skew shapes generated by this labelling respect the total order on $A$ and $A'$. That is to say, the shape formed by removing the ribbons $\succ i$ gives a skew shape $\lambda_{\prec i}/\mu$ for every $i \in A\cup A'$. We denote the set of super $n$-ribbon tableaux of shape $\l/\mu$ by $^s\negthinspace RT_n(\lambda/\mu)$.
\end{defn}

In this paper, we will consistently choose the first alphabet to be a subset of our usual alphabet of positive integers, so $A = \{1,2,3,...,r\}$ under the usual ordering, and set $A' = \{ 1',2',...,s'\}$ under a similar ordering. The total ordering between the two will vary, although two common choices we will use in examples will be $1 < 1' < 2 < 2' < \cdots$ and $1 < 2 < \cdots < r < 1' < 2' < \cdots <s'$.

\begin{defn} Fix $n \geq 1$ and let $\lambda/\mu$ be a shape tileable by $n$-ribbons. 
The \emph{super LLT polynomial}, or \emph{super ribbon function}, $\mathcal{G}_{\lambda/\mu}^{(n)}(X/Y;q)$ is the generating function
\[ \mathcal{G}_{\lambda/\mu}^{(n)}(X/Y;q) = \sum_{T\in ^s\negthinspace RT_n(\lambda/\mu)} q^{\spin(T)}x^{\wt(T)}(-y)^{\wt'(T)}\]
where $\wt(T),\wt'(T)$ are the weights in the alphabets $A,A'$ respectively.
\end{defn}

\begin{example}\label{examplesuperLLT}
Suppose that $\lambda = (3,3), \mu = \emptyset$ and $n=2$. Let $A = \{1,2\}$ and $A' = \{1'\}$ under the total order $1 < 1' < 2$. The super ribbon tableaux in $^{s}RT(\lambda/\mu)$ for this ordering, which include those from Example \ref{exampleLLT}, are 
\begin{center}
    \begin{tikzpicture}[scale = 0.5]
\draw (0,0)--(0,2)--(3,2)--(3,0)--cycle;
    \draw (1,0)--(1,2);
    \draw (2,0)--(2,2);
    \draw (0.5,1) node {$1$};
    \draw (1.5,1) node {$1$};
    \draw (2.5,1) node {$1$};
    
\draw (3.5,0)--(3.5,2)--(6.5,2)--(6.5,0)--cycle;
    \draw (4.5,0)--(4.5,2);
    \draw (5.5,0)--(5.5,2);
    \draw (4,1) node {$1$};
    \draw (5,1) node {$1$};
    \draw (6,1) node {$2$};
    
\draw (7,0)--(7,2)--(10,2)--(10,0)--cycle;
    \draw (8,0)--(8,2);
    \draw (9,0)--(9,2);
    \draw (7.5,1) node {$1$};
    \draw (8.5,1) node {$2$};
    \draw (9.5,1) node {$2$};
    
\draw (10.5,0)--(10.5,2)--(13.5,2)--(13.5,0)--cycle;
    \draw (11.5,0)--(11.5,2);
    \draw (12.5,0)--(12.5,2);
    \draw (11,1) node {$2$};
    \draw (12,1) node {$2$};
    \draw (13,1) node {$2$};
    
\draw (14,0)--(14,2)--(17,2)--(17,0)--cycle;
    \draw (15,0)--(15,2);
    \draw (16,0)--(16,2);
    \draw (14.5,1) node {$1$};
    \draw (15.5,1) node {$1$};
    \draw (16.5,1) node {$1'$};

\draw (17.5,0)--(17.5,2)--(20.5,2)--(20.5,0)--cycle;
    \draw (18.5,0)--(18.5,2);
    \draw (19.5,0)--(19.5,2);
    \draw (18,1) node {$1$};
    \draw (19,1) node {$1'$};
    \draw (20,1) node {$2$};
    
\draw (21,0)--(21,2)--(24,2)--(24,0)--cycle;
    \draw (22,0)--(22,2);
    \draw (23,0)--(23,2);
    \draw (21.5,1) node {$1'$};
    \draw (22.5,1) node {$2$};
    \draw (23.5,1) node {$2$};

\draw (0,-2.5)--(0,-0.5)--(3,-0.5)--(3,-2.5)--cycle;
    \draw (1,-2.5)--(1,-0.5);
    \draw (1,-1.5)--(3,-1.5);
    \draw (0.5,-1.5) node {$1$};
    \draw (2,-1) node {$1$};
    \draw (2,-2) node {$2$};
    
\draw (3.5,-2.5)--(3.5,-0.5)--(6.5,-0.5)--(6.5,-2.5)--cycle;
    \draw (4.5,-2.5)--(4.5,-0.5);
    \draw (4.5,-1.5)--(6.5,-1.5);
    \draw (4,-1.5) node {$1$};
    \draw (5.5,-1) node {$1$};
    \draw (5.5,-2) node {$1'$};
    
\draw (7,-2.5)--(7,-0.5)--(10,-0.5)--(10,-2.5)--cycle;
    \draw (8,-0.5)--(8,-2.5);
    \draw (8,-1.5)--(10,-1.5);
    \draw (7.5,-1.6) node {$1$};
    \draw (9,-1) node {$1'$};
    \draw (9,-2) node {$2$};
    
\draw (10.5,-0.5)--(10.5,-2.5)--(13.5,-2.5)--(13.5,-0.5)--cycle;
    \draw (11.5,-0.5)--(11.5,-2.5);
    \draw (11.5,-1.5)--(13.5,-1.5);
    \draw (11,-1.5) node {$1$};
    \draw (12.5,-1) node {$1'$};
    \draw (12.5,-2) node {$1'$};
    
\draw (14,-0.5)--(14,-2.5)--(17,-2.5)--(17,-0.5)--cycle;
    \draw (14,-1.5)--(16,-1.5);
    \draw (16,-0.5)--(16,-2.5);
    \draw (15,-1) node {$1$};
    \draw (15,-2) node {$2$};
    \draw (16.5,-1.6) node {$2$};

\draw (17.5,-0.5)--(17.5,-2.5)--(20.5,-2.5)--(20.5,-0.5)--cycle;
    \draw (17.5,-1.5)--(19.5,-1.5);
    \draw (19.5,-0.5)--(19.5,-2.5);
    \draw (18.5,-1) node {$1$};
    \draw (18.5,-2) node {$1'$};
    \draw (20,-1.5) node {$2$};
    
\draw (21,-0.5)--(21,-2.5)--(24,-2.5)--(24,-0.5)--cycle;
    \draw (21,-1.5)--(23,-1.5);
    \draw (23,-0.5)--(23,-2.5);
    \draw (22,-1) node {$1'$};
    \draw (22,-2) node {$2$};
    \draw (23.5,-1.5) node {$2$};
    
\draw (24.5,-0.5)--(24.5,-2.5)--(27.5,-2.5)--(27.5,-0.5)--cycle;
    \draw (24.5,-1.5)--(26.5,-1.5);
    \draw (26.5,-0.5)--(26.5,-2.5);
    \draw (25.5,-1) node {$1'$};
    \draw (25.5,-2) node {$1'$};
    \draw (27,-1.5) node {$2$};
    
\end{tikzpicture}
\end{center}

We then compute that
\begin{align*}
    \mathcal{G}^{(2)}_{(3,3)}(X/Y;q) &= q^3(x_1^3 + x_1^2x_2 +x_1x_2^2 + x_2^3 -x_1^2y_1 - x_1x_2y_1 - x_2^2y_1)\\ &\quad\quad + q(x_1^2x_2 + x_1x_2^2 -x_1^2y_1 - 2x_1x_2y_1 - x_2^2y_1 + x_1y_1^2 + x_2y_1^2).
\end{align*}
\end{example}

Observe that this super LLT polynomial is symmetric in the $X$ and $Y$ variables separately. Also, while the set of tableaux in Example \ref{examplesuperLLT} depends on the total order, it turns out the super LLT polynomial does not: the reader may find it interesting to compute the tableaux for the total order $1 < 2 < 1'$ and note that they generate the same polynomial. These are in fact both general properties of super LLT polynomials, and were originally proven in \cite{lam-ribbon-heisenberg} using the machinery we will discuss in Section \ref{sec:heisenberg}. We also reprove them using the solvability of lattice models in Section \ref{sec:latticemodel}.

\begin{prop}[\cite{lam-ribbon-heisenberg}, Prop 30]\label{prop:orderdoesntmatter}
The function $\mathcal{G}_{\lambda/\mu}(X/Y;q)$ is symmetric in each of $X$ and $Y$ and does not depend on the total order fixed between $A$ and $A'$.
\end{prop}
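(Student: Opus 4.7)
My plan is to reduce the proposition to invariance under three types of adjacent transpositions in the total order $\prec$, together with swaps of consecutive variables $x_i \leftrightarrow x_{i+1}$ and $y_j \leftrightarrow y_{j+1}$. Specifically, I will prove that $\mathcal{G}_{\lambda/\mu}(X/Y;q)$ is unchanged when we swap two elements that are adjacent in $\prec$, allowing for the appropriate relabeling of variables. Since every permutation of $A$, every permutation of $A'$, and every change of total order can be built up from such adjacent transpositions, this suffices. For each local swap I need to exhibit a spin-preserving bijection on $^s\negthinspace RT_n(\lambda/\mu)$ that tracks weights correctly. I will treat the ``ribbons labeled by the two swapped symbols'' as a local region $R$ and leave all other ribbons fixed, reducing everything to a local combinatorial statement about $R$.

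The first two cases, swapping adjacent $i, i+1 \in A$ or adjacent $j', (j+1)' \in A'$, reduce directly to symmetry statements already established for LLT polynomials. For the $A$-case, the region $R$ is tiled by two successive horizontal $n$-ribbon strips, and the $x_i \leftrightarrow x_{i+1}$ symmetry of the (non-super) LLT polynomial $\mathcal{G}_{\nu/\mu}^{(n)}(X;q)$ from \cite{LLT97} provides a spin-preserving involution on two-color tilings of $R$ that swaps the number of $i$- and $(i+1)$-ribbons. For the $A'$-case, I apply the same argument after conjugating the entire Young diagram: conjugation exchanges horizontal and vertical ribbon strips, preserves the underlying ribbon shapes, and preserves spin (since conjugation of an $n$-ribbon of height $h$ produces an $n$-ribbon of width $h$, whose spin equals $n-h$; careful bookkeeping shows the total spin is preserved, possibly after a global shift). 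Thus both intra-alphabet swaps follow from the classical LLT symmetry applied to an appropriate sub-region.

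The main obstacle is the mixed case, where we swap an adjacent pair $a \in A$ and $b \in A'$ in the total order. Say originally $a \prec b$, so that we build $\lambda_{\preceq a} = \lambda_{\prec a} \cup R_a$ by adding a horizontal $n$-ribbon strip $R_a$ of $a$-labels, and then $\lambda_{\preceq b} = \lambda_{\preceq a} \cup R_b$ by adding a vertical $n$-ribbon strip $R_b$ of $b$-labels. After the swap I need a vertical strip $R'_b$ on $\lambda_{\prec a}$ followed by a horizontal strip $R'_a$ on $\lambda_{\prec a} \cup R'_b$, with $|R_a| = |R'_a|$, $|R_b| = |R'_b|$, and the same total spin. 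My approach is to first show that the ambient region $R = R_a \cup R_b = R'_a \cup R'_b$ is the same skew shape in both orderings (this is the crux: the union $R$ equals $\lambda_{\preceq b} \setminus \lambda_{\prec a}$, which depends only on which symbols lie $\preceq b$ and which lie $\prec a$, not on the internal order of $a$ and $b$), and then to construct a bijection on labeled two-color tilings of $R$ by induction on the number of ribbons in $R$. The inductive step would analyze an ``extremal'' ribbon of $R$ — one that can be removed without violating the horizontal/vertical strip conditions on either side — showing that such a ribbon can be reassigned between the $a$-strip and the $b$-strip in a controlled way that preserves spin. Once the bijection on $R$ is established with the correct weight and spin behavior, combining it with the identity on the complement yields the desired spin-preserving bijection on $^s\negthinspace RT_n(\lambda/\mu)$, completing the proof.
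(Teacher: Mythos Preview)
Your reduction to adjacent transpositions and the localization to the two-color region $R$ are sound, and the $A$-case (two horizontal rows) genuinely reduces to the known symmetry of ordinary LLT polynomials. The $A'$-case via conjugation also works once you track the spin correctly: a ribbon of spin $s$ conjugates to one of spin $n-1-s$, so the total spin shifts by $k(n-1)$ where $k=|R|/n$ is fixed, and the symmetry transfers as you say. The phrase ``possibly after a global shift'' should be replaced by this explicit computation, but there is no real obstruction there.

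The genuine gap is the mixed case. You correctly observe that the ambient region $R$ is the same in either order, and that what must be produced is a spin-preserving bijection between decompositions $R = R_a \cup R_b$ (horizontal then vertical) and $R = R'_b \cup R'_a$ (vertical then horizontal) with $|R_a|=|R'_a|$, $|R_b|=|R'_b|$. But the proposal stops at ``analyze an extremal ribbon \ldots\ reassign in a controlled way that preserves spin,'' which is precisely the content that needs to be supplied. It is not clear which ribbon is ``extremal,'' why removing it leaves both a valid horizontal-then-vertical and a valid vertical-then-horizontal decomposition on the remainder, or why the reassignment preserves both the ribbon counts and the total spin. Unlike the intra-alphabet cases, there is no off-the-shelf LLT symmetry to invoke here; you would have to build the involution from scratch, and that is the heart of the proposition.

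By contrast, the paper avoids constructing any bijection at all. It realizes $\mathcal{G}_{\lambda/\mu}(X/Y;q)$ as the partition function of a solvable lattice model (Theorem~\ref{mainthm}) and then proves Yang--Baxter equations for swapping two horizontal rows, two vertical rows, and a horizontal with a vertical row (Theorems~\ref{thm:horizontalYBE}, \ref{thm:verticalYBE}, \ref{thm:mixedYBE}). A standard train argument---attach the $R^{(n)}$-vertex on one side, push it through, and compare the resulting boundary $R$-vertices, which are forced to be identical---then shows that swapping adjacent rows of any two types leaves the partition function unchanged. The paper also notes that Lam's original proof goes through the Heisenberg-operator description: all of the $D_\alpha$ and $\widetilde D_\beta$ are polynomials in the $B_k$ for $k>0$, which pairwise commute, so the order of application is immaterial. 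Either route handles the mixed case uniformly with the other two, whereas your bijective approach treats it as a separate (and, as written, unfinished) problem.
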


It would be natural to suppose that the super LLT polynomials for a skew partition $\lambda/\mu$ and its conjugate skew partition $\lambda'/\mu'$ are closely related, since flipping a super ribbon tableau results in a new super ribbon tableau for the conjugate shape. Combining this with Proposition \ref{prop:orderdoesntmatter} gives us a precise relationship between the two polynomials.

\begin{prop}\label{prop:flipover} Let $\lambda'$ denote the conjugate partition of $\lambda$. For any $n$ and any skew partition $\lambda/\mu$,
\[ \mathcal{G}_{\lambda/\mu}(X/Y;q) = (-1)^{\frac{|\lambda-\mu|}{n}} q^{(n-1)\frac{|\lambda-\mu|}{n}}\mathcal{G}_{\lambda'/\mu'}(Y/X;q^{-1}).\]
\end{prop}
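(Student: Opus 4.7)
The plan is to exhibit a bijection $T \mapsto T'$ between ${}^s\negthinspace RT_n(\l/\mu)$ and ${}^s\negthinspace RT_n(\l'/\mu')$ given by reflecting each super tableau through the antidiagonal $y=-x$, then track how the $\spin$ and $\wt$ statistics transform. By Proposition \ref{prop:orderdoesntmatter}, both super LLT polynomials are independent of the total order chosen on $A \cup A'$, so we are free to reinterpret the alphabets after reflecting. Since the reflection interchanges horizontal and vertical $n$-ribbon strips by the very definition of the latter, an $A$-label in $T$ (which sits on a horizontal strip) ends up on a vertical strip in $T'$, and dually for $A'$. Conjugation commutes with the skew-subshape filtration, so $T'$ is a bona fide super ribbon tableau of shape $\l'/\mu'$ once the two alphabets are assigned their swapped roles.

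Next I would compute how $\spin$ and $\wt$ transform. Each $n$-ribbon has height plus width equal to $n+1$, so the antidiagonal reflection sends a ribbon of spin $s = h-1$ to one of spin $n-1-s$. Summing over the $N := |\l/\mu|/n$ ribbons yields $\spin(T') = (n-1)N - \spin(T)$. The multisets of labels in $T$ and $T'$ are identical but with swapped roles, so $T'$ viewed as a super tableau of $\l'/\mu'$ has horizontal weight $\wt_{A'}(T)$ and vertical weight $\wt_A(T)$. Substituting into the definition produces
\[
\mathcal{G}_{\l'/\mu'}(Y/X;q^{-1}) = q^{-(n-1)N} \sum_T q^{\spin(T)}\, (-1)^{|\wt_A(T)|}\, x^{\wt_A(T)}\, y^{\wt_{A'}(T)},
\]
where $|\wt_A(T)|$ denotes the number of $A$-labeled ribbons. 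Multiplying by $(-1)^N q^{(n-1)N}$ and using $N = |\wt_A(T)| + |\wt_{A'}(T)|$ to rewrite $(-1)^{N + |\wt_A(T)|}$ as $(-1)^{|\wt_{A'}(T)|}$ collapses the right-hand side to $\sum_T q^{\spin(T)} x^{\wt_A(T)} (-y)^{\wt_{A'}(T)} = \mathcal{G}_{\l/\mu}(X/Y;q)$.

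The main obstacle is verifying that the antidiagonal reflection truly constitutes a bijection of super ribbon tableaux once the alphabet roles are swapped — in particular, that the total-order filtration condition on skew subshapes is respected. This is where Proposition \ref{prop:orderdoesntmatter} is essential: because both sides of the identity may be computed with respect to any convenient ordering on $A \cup A'$, the induced order on the relabeled $T'$ causes no difficulty. The remaining bookkeeping follows from the involution $s \mapsto n-1-s$ on ribbon spins and the elementary sign manipulation above.
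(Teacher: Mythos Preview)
Your proof is correct and follows essentially the same approach as the paper: both argue via the antidiagonal reflection bijection $T\mapsto T'$, both use the identity $\spin(r)+\spin(r')=n-1$ on individual ribbons to obtain $\spin(T')=(n-1)N-\spin(T)$, both invoke Proposition~\ref{prop:orderdoesntmatter} to handle the total order after swapping alphabet roles, and both track the sign change arising from the interchange of $x$- and $(-y)$-weights. Your sign bookkeeping via $(-1)^{N+|\wt_A(T)|}=(-1)^{|\wt_{A'}(T)|}$ is a bit more explicit than the paper's, but the substance is identical.
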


\begin{proof}
Consider the left hand side as a generating function over super ribbon tableaux of shape $\lambda/\mu$ under the alphabet $A, A'$. Given one of these tableau $T$, flipping it over the anti-diagonal produces a new ribbon tableau $T'$ of shape $\lambda'/\mu'$ with the alphabets flipped, since horizontal ribbon strips become vertical strips and vice versa. We then compare the weights of $T$ and $T'$. In $T$, ribbons labeled $i\in A$ add a factor of $x_i$ and those labeled $j \in A'$ add a factor of $(-y_{j})$. In $T'$, the alphabets swap roles, which amounts to changing the sign of each ribbon. That is, if we let $wt(-)$ be the weight in the horizontal alphabet and $wt'(-)$ that in the vertical alphabet on each side, we have
\[ x^{wt(T)}(-y)^{wt'(T)} = (-1)^{\frac{|\lambda-\mu|}{n}} (-x)^{wt'(T')}y^{wt(T')}.\]
For the sake of clarity, we maintain the same total order on our alphabets, regardless of the fact that their roles have changed, since by Proposition \ref{prop:orderdoesntmatter} the polynomials are independent of the chosen ordering.
Regarding the power of $q$, each individual ribbon $r$ has the property that its spin satisfies $s(r) + s(r') = n-1$, so the full ribbon tableaux are related by 
\[ s(T) + s(T') = (n-1)\frac{|\lambda-\mu|}{n}.\]
\end{proof}

\section{Heisenberg Algebras and the Cauchy Identity}\label{sec:heisenberg}

Many symmetric polynomials may be represented using representations of a Heisenberg algebra generalized from the fermion side of the classical Boson-Fermion correspondence. In this context, elements of the algebra become operators on a Fock space indexed by partitions $\lambda$, encoding the algebraic framework behind adding or removing boxes in the generating tableaux shapes. Examining the interactions of these operators provides elegant proofs of identities that are difficult or impossible to prove from the tableaux definition. Conversely, families of polynomials generated by such operators may frequently be written as generating functions over tableaux satisfying a similar list of identities (see \cite{LamBF} for a convenient reference). In this section, we use the operator definition for super LLT polynomials to prove a Cauchy identity.

In order to properly construct and examine these operators, we first recall a few concepts from symmetric function theory. Let $\Lambda$ denote the ring of symmetric functions over $\mathbb{C}$. As is standard, let $h_k$ denote the homogeneous symmetric functions, $e_k$ the elementary symmetric functions, and $p_k$ the power sum symmetric functions. Define $p_\lambda := p_{\lambda_1}p_{\lambda_2}\cdots p_{\lambda_r}$ for $\lambda = (\lambda_1,\lambda_2...,\lambda_r)$. Noting that $p_\lambda$ form a basis for $\Lambda$, the expansions for $h_k$ and $e_k$ are
\begin{align*}
    h_k = \sum_{\lambda \vdash k} z_\lambda^{-1} p_\lambda
\hspace{2cm}
    e_k = (-1)^k \sum_{\lambda \vdash k} (-1)^{\ell(\lambda)} z_\lambda^{-1} p_\lambda
\end{align*}
where $z_\lambda = 1^{m_1(\lambda)} m_1(\lambda)! \cdots n^{m_n(\lambda)} m_n(\lambda)!$.

For the remainder of this section, fix a positive integer $n$. Let the Heisenberg algebra $H = H[a_i]$ be the associative algebra with identity generated by the set $\{B_k : k \in \Z \setminus 0\}$ with the relation
\begin{align}
     [B_j,B_k] = j\cdot a_k \cdot \delta_{j,-k}
\end{align}\label{eq:commutator}
\noindent with $a_k := \frac{1-q^{2nk}}{1-q^{2k}} = 1 + q^{2k} + \cdots + q^{2k(n-1)}$. This algebra and its representations are well studied, first by Lascoux, Leclerc, and Thibon \cite{LLT97} and then by Lam for generic $a_i$ \cite{LamBF}. Here, we only need the original representation used by LLT on the Fock space $\boldsymbol{F}$ generated over $\mathbb{C}$ by the symbols $|\lambda\rangle$ for partitions $\lambda$. This space is equipped with an inner product $\langle -,-\rangle$, so for notational convenience, we will simplify $|\lambda\rangle$ to $\lambda$ when it appears within the inner product, which is defined by $\langle \lambda, \mu\rangle = \delta_{\lambda \mu}$. It is useful to note that the actions of $B_k$ and $B_{-k}$ on $\boldsymbol{F}$ are adjoint with respect to this inner product.

Let $B_\lambda : = B_{\lambda_1}B_{\lambda_2} \cdots B_{\lambda_{\ell(\lambda)}}$ and define $B_{-\lambda}$ similarly. We define four operators within the Heisenberg algebra:
\begin{align*}
    D_k &:= \sum_{\lambda \vdash k} z_\lambda^{-1} B_\lambda
    &\widetilde{D}_k := (-1)^k \sum_{\lambda \vdash k} (-1)^{\ell(\lambda)} z_\lambda^{-1} B_\lambda\\
    U_k &:= \sum_{\lambda \vdash k} z_\lambda^{-1} B_{-\lambda}
    &\widetilde{U}_k := (-1)^k \sum_{\lambda \vdash k} (-1)^{\ell(\lambda)} z_\lambda^{-1} B_{-\lambda}.
\end{align*}    
Note that we have designed $D_k$ (respectively $\widetilde{D}_k$) to relate to $B_\lambda$ in the same way as $h_k$ (respectively $e_k$) does to $p_\lambda$.

\begin{defn}
Let $\lambda,\mu$ be partitions. Then, we define the generating functions
\begin{align*}
    F_{\lambda/\mu}(X/Y;q) &= \sum_{\a,\b} X^\a (-Y)^\b \langle U_{\a_l}U_{\a_{l-1}}\cdots U_{\a_1}\widetilde{U}_{\b_m}\widetilde{U}_{\b_{m-1}}\cdots \widetilde{U}_{\b_1} \cdot \mu, \lambda\rangle,\\
    G_{\lambda/\mu}(X/Y;q) &= \sum_{\a,\b} X^\a (-Y)^\b \langle D_{\a_l}D_{\a_{l-1}}\cdots D_{\a_1}\widetilde{D}_{\b_m}\widetilde{D}_{\b_{m-1}}\cdots \widetilde{D}_{\b_1} \cdot \lambda, \mu\rangle
\end{align*}
where the sums run over all compositions $\a,\b$.
\end{defn}

It is helpful to think of these as tableaux style definitions: the $U$ operators add $n$-ribbon strips to a tableaux of shape $\mu$: $U_k$ adding horizontal strips and $\widetilde{U}_k$ vertical strips, and similarly, the $D$ operators remove $n$-ribbon strips from a tableaux of shape $\lambda$: $D_k$ horizontal and $\widetilde{D}_k$ vertical. Note: in \cite{lam-ribbon-heisenberg}, the operators $D_k$ are denoted $\mathcal{V}_k$. From this viewpoint, we realize that both $F_{\lambda/\mu}$ and $G_{\lambda/\mu}$ recover the super LLT polynomials defined in the previous section. This reformulation allowed Lam to prove the symmetry properties of super LLT polynomials stated in Proposition \ref{prop:orderdoesntmatter}, which are much more difficult to see from the tableaux definition. On the other hand, using the operator definition, symmetry in $X$ (respectively $Y$) results from the fact that the $D_\a$ (respectively   $\widetilde{D}_\a$) operators commute with themselves. Likewise, the fact that the total order does not affect the polynomial comes from the fact that the $D_\a$ and $\widetilde{D}_\a$ operators commute.

We next consider the interaction between the actions of adding and removing ribbon strips of different types. Let $\kappa: \Lambda \rightarrow \C$ be the algebra homomorphism defined by $\kappa(p_k) = a_k$. 

\begin{lemma}\label{lemma:commute} We have the following identities as elements in $H[a_i]$:
\begin{align*} D_b U_a &= \sum_{j=0}^m \kappa(h_j) U_{a-j}D_{b-j}
&\widetilde{D}_b \widetilde{U}_a = \sum_{j=0}^m \kappa(h_j) \widetilde{U}_{a-j}\widetilde{D}_{b-j}\\
\widetilde{D}_b U_a &= \sum_{j=0}^m \kappa(e_j) U_{a-j}\widetilde{D}_{b-j}
&D_b \widetilde{U}_a = \sum_{j=0}^m \kappa(e_j) \widetilde{U}_{a-j}D_{b-j}
\end{align*}
where $m = \min\{a,b\}$. Equivalently, if we let $U(x) := 1 + \sum_{i\geq 0} U_ix^i$ and define $D(x), \widetilde{U}(x), \widetilde{D}(x)$ similarly, we have the following commutation relations
\begin{align*}
    [D(y), U(x)] = [\widetilde{D}(y),\widetilde{U}(x)] &= 1 + \sum_{i\geq 0} \kappa(h_i)(xy)^i\\
    [\widetilde{D}(y), U(x)] = [D(y),\widetilde{U}(x)] &= 1 + \sum_{i\geq 0} \kappa(e_i)(xy)^i\\
\end{align*}
\end{lemma}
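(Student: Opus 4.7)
The plan is to repackage each of the four operators as an exponential generating function in the $B_k$'s, at which point the commutation relations follow from a Baker–Campbell–Hausdorff argument with a central commutator.

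First I would establish, using the classical exponential formulas
\[ \sum_{k \geq 0} h_k t^k = \exp\!\left(\sum_{j \geq 1} \frac{p_j}{j}\, t^j\right), \qquad \sum_{k \geq 0} e_k t^k = \exp\!\left(\sum_{j \geq 1} (-1)^{j-1} \frac{p_j}{j}\, t^j\right), \]
together with the observation that $\{B_j : j > 0\}$ and $\{B_{-j} : j > 0\}$ are each pairwise commuting (immediate from the fact that $[B_j, B_k] = j a_k \delta_{j, -k}$ vanishes whenever $j, k$ have the same sign), the generating function identities
\[ U(x) = \exp\!\left(\sum_{j\geq 1} \frac{B_{-j}}{j} x^j\right), \qquad \widetilde U(x) = \exp\!\left(\sum_{j\geq 1}(-1)^{j-1}\frac{B_{-j}}{j} x^j\right), \]
and the analogous formulas for $D(y)$ and $\widetilde D(y)$ with $B_{-j}$ replaced by $B_j$. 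Expanding the exponentials and collecting coefficients recovers the definitions $U_k = \sum_{\lambda \vdash k} z_\lambda^{-1} B_{-\lambda}$ etc., using $z_\lambda = \prod_i i^{m_i(\lambda)} m_i(\lambda)!$.

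Because $[B_j, B_{-k}]$ is a scalar (hence central), the standard identity $e^X e^Y = e^{[X,Y]}\, e^Y e^X$ valid whenever $[X,Y]$ is central applies directly. Taking $X = \sum_j \frac{B_j}{j} y^j$ and $Y = \sum_k \frac{B_{-k}}{k} x^k$, I compute
\[ [X,Y] = \sum_{j,k \geq 1} \frac{y^j x^k}{jk}\, [B_j, B_{-k}] = \sum_{j \geq 1} \frac{a_j}{j}(xy)^j. \]
Applying the $\C$-algebra homomorphism $\kappa : \capl \to \C$ with $\kappa(p_j) = a_j$ to the exponential formula for the $h_k$ immediately identifies $\exp([X,Y]) = \sum_{k \geq 0} \kappa(h_k)(xy)^k$, yielding $D(y)\,U(x) = \bigl(\sum_k \kappa(h_k)(xy)^k\bigr) U(x)\, D(y)$. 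Extracting the coefficient of $x^a y^b$ from this formal identity (a routine double-sum manipulation) produces the first coefficient-wise identity $D_b U_a = \sum_{j=0}^{\min(a,b)} \kappa(h_j) U_{a-j} D_{b-j}$.

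The remaining three identities follow by the identical argument, with only sign bookkeeping to track. For $\widetilde D(y)\widetilde U(x)$ the two factors $(-1)^{j-1}$ in the exponents combine to $+1$ on the $j=k$ diagonal, so the exponent is again $\sum_j \frac{a_j}{j}(xy)^j$ and $\kappa(h_k)$ reappears. For the mixed pairs $D(y)\widetilde U(x)$ and $\widetilde D(y) U(x)$ a single sign $(-1)^{j-1}$ survives, giving $\exp\!\left(\sum_j (-1)^{j-1}\frac{a_j}{j}(xy)^j\right) = \sum_k \kappa(e_k)(xy)^k$ after applying $\kappa$ to the elementary-symmetric exponential formula. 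I do not anticipate a serious obstacle; the only point requiring care is verifying the transition between the generating-function identity $D(y)U(x) = (\cdots)U(x)D(y)$ and the coefficient formula, which reduces to reindexing the triple sum via $a' = a - j$, $b' = b - j$.
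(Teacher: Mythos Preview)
Your argument is correct and is a genuinely different route from the paper's. The paper works entirely at the level of partitions: it expands $\widetilde D_b \widetilde U_a$ as a double sum over $\pi \vdash b$, $\rho \vdash a$, then repeatedly applies the Heisenberg relation to move each $B_{\pi_i}$ past each $B_{-\rho_j}$, picking up a scalar whenever two parts match. The combinatorics of this commutation is organized by the sub-partition $\lambda$ of matched parts, and the sum $\sum_{\lambda \vdash j} z_\lambda^{-1}\kappa(p_\lambda)$ is then recognized as $\kappa(h_j)$ (or $\kappa(e_j)$ in the mixed cases, where a stray sign $(-1)^{j+\ell(\lambda)}$ survives). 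Your approach instead packages each operator as an exponential, so that the commutation becomes a single application of $e^Xe^Y = e^{[X,Y]}e^Ye^X$ with central $[X,Y]$, and the identification of the scalar factor with $\sum_k \kappa(h_k)(xy)^k$ or $\sum_k \kappa(e_k)(xy)^k$ is immediate from the classical exponential formulas. Your method is cleaner and makes the structural reason for the four identities transparent (two sign insertions that either cancel or do not); the paper's method is more hands-on and shows explicitly at the monomial level how the $h_j$ versus $e_j$ distinction arises from the parity of $\ell(\lambda)$. Both are standard in the Heisenberg/boson--fermion literature, and neither has a gap.
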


\begin{proof} The relation of $D_b$ and $U_a$ is proven as Lemma 8 in \cite{LamBF}. We follow the general thread of that proof for the remaining relations: consider the second desired identity with $\widetilde{D}_b$ and $\widetilde{U}_a$. Expanding via definition, the left hand side becomes 
\begin{align*}\widetilde{D}_b \widetilde{U}_a  &= (-1)^a(-1)^b\sum_{\substack{\rho \vdash a\\\pi \vdash b}} (-1)^{\ell(\rho) + \ell(\pi)} z_\rho^{-1}z_\pi^{-1} B_\pi B_{-\rho}
\intertext{Then, apply the commutation relation (\ref{eq:commutator}) to swap the order of positively and negatively-signed $B$ operators. Effectively, this process builds a smaller partition $\lambda$ out of parts that are in both $\pi$ and $\rho$; we must account for all the ways to build a given $\lambda$ in this manner. If $\mu = \pi \setminus \lambda$ and $\nu = \rho \setminus \lambda$, the expression above becomes}
&= (-1)^a(-1)^b \sum_{\substack{\rho \vdash a\\\pi \vdash b}} (-1)^{\ell(\pi) + \ell(\rho)} \sum_{\substack{\lambda \subset \rho\\\lambda \subset \pi}} z_{\mu}^{-1}z_{\nu}^{-1} z_\lambda^{-1} \kappa(p_\lambda) B_{-\mu} B_{\nu}
\intertext{Reindexing over the possible sizes $j$ of $\lambda$, cancelling matching powers of -1, and using the expansion of $h_j$ in terms of $p_\lambda$, we have}
&= \sum_{j=0}^m (-1)^{a-j}(-1)^{b-j} \kappa(h_j) \sum_{\substack{\mu \vdash a-j\\\nu \vdash b-j}}  (-1)^{\ell(\mu) + \ell(\nu)}z_{\mu}^{-1} z_{\nu}^{-1} B_{-\mu}B_\nu
\end{align*}
which by definition is the desired right hand side. The remaining identities follow similarly, except that since we only have one tilde-type operator, the power of $(-1)^{j + \ell(\lambda)}$ doesn't cancel out. Instead, it folds in with the $\kappa(p_\lambda)$ according to the expansion of $e_j$ in terms of $p_\lambda$, so we have a coefficient of $\kappa(e_j)$ instead of $\kappa(h_j)$.
\end{proof}

Using this operator definition, we can construct a generalized Cauchy identity. Note that this theorem applies to any function that can be achieved through specialization of $a_i$ in the operators, not just to super LLT polynomials, however, we will focus on the implications for super LLT polynomials in this paper. See also Hardt \cite{Hardt} or Aggarwal, Borodin, Petrov, and Wheeler \cite{ABPW} for an interesting investigation of this identity in terms of Hamiltonian operators; our Theorem \ref{thm:cauchy} is similar to their Proposition 2.10 and Proposition 3.7, respectively.

\begin{theorem}\label{thm:cauchy} Let $\mu,\nu$ be partitions. Then
\[\sum_{\lambda} F_{\lambda/\mu}(X/Y) G_{\lambda/\nu} (W/Z) = [\ast]\cdot \sum_{\lambda} F_{\nu/\lambda}(X/Y) G_{\mu/\lambda}(W/Z)\]
where
\[ [\ast] = \prod_{i,j,k,\ell} [D(w_k), U(x_i)]\cdot  [\widetilde{D}(-z_\ell),\widetilde{U}(-y_j)]\cdot  [\widetilde{D}(-z_\ell),U(x_i)] \cdot [D(w_k), \widetilde{U}(-y_j)].\]
\end{theorem}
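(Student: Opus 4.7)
The plan is to recognize both sides of the Cauchy identity as matrix coefficients of operator strings on the Fock space $\boldsymbol{F}$, and then transport the annihilation operators past the creation operators using Lemma \ref{lemma:commute}. Each swap produces a scalar commutation factor, and these factors assemble into exactly $[\ast]$.

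First, I would package the definitions into generating series. Setting
\[ U(X) := \prod_i U(x_i),\quad \widetilde{U}(-Y) := \prod_j \widetilde{U}(-y_j),\quad D(W) := \prod_k D(w_k),\quad \widetilde{D}(-Z) := \prod_\ell \widetilde{D}(-z_\ell),\]
the sums over compositions in the definitions of $F$ and $G$ collapse to
\[ F_{\lambda/\mu}(X/Y;q) = \langle U(X)\widetilde{U}(-Y)\mu,\lambda\rangle,\qquad G_{\lambda/\nu}(W/Z;q) = \langle D(W)\widetilde{D}(-Z)\lambda,\nu\rangle.\]
Using orthonormality of the basis $\{|\lambda\rangle\}$, both sides of the desired identity become single matrix coefficients:
\begin{align*}
\sum_\lambda F_{\lambda/\mu}(X/Y;q)\,G_{\lambda/\nu}(W/Z;q) &= \langle D(W)\widetilde{D}(-Z)\,U(X)\widetilde{U}(-Y)\,\mu,\nu\rangle,\\
\sum_\lambda F_{\nu/\lambda}(X/Y;q)\,G_{\mu/\lambda}(W/Z;q) &= \langle U(X)\widetilde{U}(-Y)\,D(W)\widetilde{D}(-Z)\,\mu,\nu\rangle.
\end{align*}
Thus it suffices to prove the operator identity
\[ D(W)\widetilde{D}(-Z)\,U(X)\widetilde{U}(-Y) \;=\; [\ast]\cdot U(X)\widetilde{U}(-Y)\,D(W)\widetilde{D}(-Z). \]

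To establish this, I invoke Lemma \ref{lemma:commute} in its generating-series form. Each pairing of an annihilation operator from $\{D(w_k), \widetilde{D}(-z_\ell)\}$ with a creation operator from $\{U(x_i), \widetilde{U}(-y_j)\}$ satisfies a relation of the type $D(y)U(x) = [D(y),U(x)]\cdot U(x)D(y)$, where the factor $[D(y),U(x)]$ is a scalar power series in $xy$ (and analogously for the three other pairings, with $\kappa(h_i)$ replaced by $\kappa(e_i)$ as appropriate). Because these commutation factors are scalars, they slide freely past every remaining operator. I would sweep each $D(w_k)$ and $\widetilde{D}(-z_\ell)$ rightward past each $U(x_i)$ and $\widetilde{U}(-y_j)$, collecting one scalar per pair; the four products that result are precisely the four factors appearing in $[\ast]$.

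The main obstacle is bookkeeping rather than substance: one must verify that the iterated sweep is unambiguous, which follows from the commutation facts noted just before Lemma \ref{lemma:commute} (the $D$'s commute among themselves and with the $\widetilde{D}$'s, and the same holds on the creation side). With those in hand, the sweep can be performed in any order and produces exactly the scalar prefactor $[\ast]$, which completes the proof.
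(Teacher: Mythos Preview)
Your proposal is correct and follows essentially the same approach as the paper: collapse each sum over $\lambda$ into a single matrix coefficient using orthonormality of the Fock basis, then use the generating-series form of Lemma~\ref{lemma:commute} to commute all the $D$-type operators past all the $U$-type operators, collecting the scalar factors into $[\ast]$. Your explicit remark that the creation operators commute among themselves and likewise for the annihilation operators (so the sweep order is immaterial) is a helpful clarification that the paper leaves implicit.
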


\begin{proof}
By definition, we have $F_{\lambda/\mu}(X/Y) = \langle \cdots U(x_2)U(x_1) \cdots \widetilde{U}(-y_2) \widetilde{U}(-y_1) \cdot \mu, \lambda\rangle$ and $G_{\lambda/\mu}(X/Y) = \langle \cdots D(x_2)D(x_1) \cdots \widetilde{D}(-y_2) \widetilde{D}(-y_1) \cdot \lambda, \mu\rangle$. Then using the properties of the inner product and the commutation relation of Lemma \ref{lemma:commute}, we have
\begin{align*}
    \sum_{\lambda} F_{\lambda/\mu}(X/Y) &G_{\lambda/\nu} (W/Z) = \sum_\lambda \langle \cdots U(x_2)U(x_1) \cdots \widetilde{U}(-y_2) \widetilde{U}(-y_1) \cdot \mu, \lambda\rangle \\ &\hspace{3cm} \cdot\langle \cdots D(w_2)D(w_1) \cdots \widetilde{D}(-z_2) \widetilde{D}(-z_1) \cdot \lambda, \nu\rangle\\
    &= \langle \cdots D(w_2)D(w_1) \cdots \widetilde{D}(-z_2) \widetilde{D}(-z_1) \cdots U(x_2)U(x_1) \cdots \widetilde{U}(-y_2) \widetilde{U}(-y_1) \cdot \mu, \nu\rangle\\
    &= [\ast] \langle \cdots U(x_2)U(x_1) \cdots \widetilde{U}(-y_2) \widetilde{U}(-y_1) \cdots D(w_2)D(w_1) \cdots \widetilde{D}(-z_2) \widetilde{D}(-z_1)  \cdot \mu, \nu\rangle\\
    &=[\ast] \cdot\sum_\lambda \langle \cdots D(w_2)D(w_1) \cdots \widetilde{D}(-z_2) \widetilde{D}(-z_1) \cdot \mu, \lambda\rangle \\ &\hspace{3cm} \cdot \langle \cdots U(x_2)U(x_1) \cdots \widetilde{U}(-y_2) \widetilde{U}(-y_1) \cdot \lambda, \nu\rangle\\
    &= [\ast] \cdot \sum_\lambda F_{\nu/\lambda}(X/Y) G_{\mu/\lambda}(W/Z).
\end{align*}
\end{proof}

Specializing $\mu$ and $\nu$ to an $n$-core partition gives a more ``standard" Cauchy identity formula with a sum side and a product side. 

\begin{cor}\label{cor:cauchy}
For $\delta$ an $n$-core, 
\[\sum_{\lambda: \widetilde{\lambda} = \delta} \mathcal{G}_{\lambda/\delta}(X/Y;q) \mathcal{G}_{\lambda/\delta}(W/Z;q) = \prod_{i,j,k,\ell} \prod_{t = 0}^{n-1} \frac{(1- q^{2t}x_iz_\ell)(1-q^{2t}y_jw_k)}{(1-q^{2t}x_iw_k)(1-q^{2t}y_jz_\ell)}.\]
\end{cor}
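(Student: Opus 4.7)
The plan is to specialize Theorem \ref{thm:cauchy} to $\mu = \nu = \delta$ and evaluate each side explicitly. First I would handle the right-hand side: because $\delta$ is an $n$-core, it admits no removable $n$-ribbons, so $F_{\delta/\lambda}(X/Y)$ and $G_{\delta/\lambda}(W/Z)$ both vanish unless $\lambda = \delta$, in which case they equal $1$. Hence the RHS collapses to the single scalar $[\ast]$.

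On the left-hand side, the tableaux interpretation of the $U$ and $D$ operators identifies both $F_{\lambda/\delta}(X/Y)$ and $G_{\lambda/\delta}(X/Y)$ with the super LLT polynomial $\mathcal{G}_{\lambda/\delta}(X/Y;q)$. Moreover, $\mathcal{G}_{\lambda/\delta}$ vanishes whenever $\lambda/\delta$ is not tileable by $n$-ribbons, which is exactly the condition $\widetilde{\lambda} = \delta$. Hence the LHS of Theorem \ref{thm:cauchy} becomes the LHS of the corollary.

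The main technical step is to evaluate $[\ast]$ in product form. Using Lemma \ref{lemma:commute}, I would first establish the plethystic identities
\[\sum_{i \geq 0} \kappa(h_i) s^i = \prod_{t=0}^{n-1} \frac{1}{1 - q^{2t} s}, \qquad \sum_{i\geq 0} \kappa(e_i) s^i = \prod_{t=0}^{n-1}(1 + q^{2t} s),\]
which follow by applying $\kappa$ to the classical exponential formulas $\sum_i h_i s^i = \exp\!\bigl(\sum_k p_k s^k/k\bigr)$ and $\sum_i e_i s^i = \exp\!\bigl(\sum_k (-1)^{k-1} p_k s^k/k\bigr)$ together with $\kappa(p_k) = a_k = \sum_{t=0}^{n-1} q^{2tk}$. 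Substituting $s = x_i w_k$ into $[D(w_k), U(x_i)]$, $s = y_j z_\ell$ into $[\widetilde{D}(-z_\ell), \widetilde{U}(-y_j)]$ (where the two signs cancel), $s = -x_i z_\ell$ into $[\widetilde{D}(-z_\ell), U(x_i)]$, and $s = -y_j w_k$ into $[D(w_k), \widetilde{U}(-y_j)]$, and then multiplying over all $i,j,k,\ell$, yields precisely the product on the right-hand side of the corollary. The main obstacle is the plethystic computation together with careful bookkeeping of signs and of which bracket yields an $h$-type versus an $e$-type factor (the former when both operators share tilded status, the latter when they do not); once those are in hand, the corollary follows by direct substitution.
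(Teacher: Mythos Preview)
Your proposal is correct and follows essentially the same approach as the paper: specialize Theorem~\ref{thm:cauchy} to $\mu=\nu=\delta$, use the $n$-core property to collapse the right-hand sum to $[\ast]$, identify $F_{\lambda/\delta}=G_{\lambda/\delta}=\mathcal{G}_{\lambda/\delta}$, and evaluate the commutators in closed form. Your treatment is in fact slightly more explicit than the paper's, as you spell out the plethystic derivation of $\sum_i \kappa(h_i)s^i$ and $\sum_i \kappa(e_i)s^i$ and track the sign substitutions carefully, whereas the paper simply quotes the resulting product formulas for $[D(y),U(x)]$ and $[\widetilde{D}(y),U(x)]$.
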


\begin{proof}
Set $\mu = \nu = \delta$ and consider Theorem \ref{thm:cauchy}, recalling that for the given Heisenberg parameters $a_k =1 + q^{2k} + \cdots q^{2k(n-1)}$, both $F_{\lambda/\mu}$ and $G_{\lambda/\mu}$ produce the super LLT polynomial $\mathcal{G}_{\lambda/\mu}(X/Y;q)$. There is only one nonzero term in the right-hand sum, because you can't take any $n$-ribbon strips away from an $n$-core, so the only term is 1 from $\a = \b = \emptyset$. It remains then to simplify the commutation factor: we have
\[  [D(y),U(x)] = \prod_{t = 0}^{n-1} \frac{1}{1-q^{2t}xy}
\quad\quad \text{and}\quad\quad
 [\widetilde{D}(y), U(x)] = \prod_{t = 0}^{n-1} (1+q^{2t}xy)\]
so taking the product over the appropriate combinations of signed $x,y,z,w$ variables produces the desired statement.
\end{proof}

\begin{rmk}
Specializing $Y= vX$ and $Z = vW$ in  recovers the Cauchy identity for metaplectic Whittaker functions $\mathcal{M}^n_{\lambda}(\boldsymbol{z})$ given in \cite{BBBG}, as $\mathcal{M}^n_{\lambda/\mu}(\boldsymbol{z}) = \mathcal{G}^n_{\lambda/\mu}(\boldsymbol{z}/ v\boldsymbol{z})$.
\end{rmk}

In search of a Dual Cauchy Identity to match this Cauchy Identity, we revisit the Cauchy and Dual Cauchy identities for LLT polynomials: since $\mathcal{G}_{\lambda/\mu}(X/0;q) = \mathcal{G}_{\lambda/\mu}(X;q)$, Corollary \ref{cor:cauchy} recovers the Cauchy identity for LLT polynomials:

\begin{cor}[Cauchy identity for LLT] For an $n$-core $\delta$,
\[ \sum_{\lambda: \widetilde{\lambda} = \delta} \mathcal{G}_{\lambda/\delta}(X/0;q)\cdot \mathcal{G}_{\lambda/\delta}(Y/0;q)= \prod_{i,j} \prod_{t=0}^{n-1} \frac{1}{1-x_iy_jq^{2t}}.\]
\end{cor}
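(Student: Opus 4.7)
The plan is to deduce this directly from Corollary \ref{cor:cauchy} by specializing the primed alphabets. Specifically, I would set the $Y$-variables and $Z$-variables (corresponding to the alphabet $A'$) all to zero, and relabel the remaining $W$ variables as $Y$ at the end.

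First, I would justify that $\mathcal{G}_{\lambda/\delta}(X/0;q) = \mathcal{G}_{\lambda/\delta}(X;q)$. Expanding the super LLT polynomial as a generating function over super $n$-ribbon tableaux, each tableau $T$ contributes a monomial $x^{\wt(T)}(-y)^{\wt'(T)}$. Setting all $y_j = 0$ kills every monomial in which $\wt'(T) \neq 0$, so only tableaux using exclusively letters from the unprimed alphabet $A$ survive. These are precisely the semistandard $n$-ribbon tableaux of shape $\lambda/\delta$, so the surviving sum is exactly $\mathcal{G}_{\lambda/\delta}(X;q)$. The same argument shows $\mathcal{G}_{\lambda/\delta}(W/0;q) = \mathcal{G}_{\lambda/\delta}(W;q)$.

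Next, I would specialize the product side of Corollary \ref{cor:cauchy}. Setting $y_j = z_\ell = 0$ makes every factor involving a $y_j$ or $z_\ell$ equal to $1$, leaving only
\[\prod_{i,k}\prod_{t=0}^{n-1}\frac{1}{1-q^{2t}x_iw_k}.\]
Combining these two specializations in Corollary \ref{cor:cauchy} yields
\[\sum_{\lambda:\widetilde{\lambda}=\delta}\mathcal{G}_{\lambda/\delta}(X;q)\,\mathcal{G}_{\lambda/\delta}(W;q) = \prod_{i,k}\prod_{t=0}^{n-1}\frac{1}{1-q^{2t}x_iw_k},\]
and relabeling $W$ as $Y$ produces the stated identity. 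There is essentially no obstacle here: the whole content of the result lies in Corollary \ref{cor:cauchy}, and the proof amounts to observing that the super framework specializes cleanly to the classical one by zeroing out the vertical-strip variables on both sides.
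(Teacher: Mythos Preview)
Your proposal is correct and follows exactly the same approach as the paper: the paper simply notes that $\mathcal{G}_{\lambda/\mu}(X/0;q) = \mathcal{G}_{\lambda/\mu}(X;q)$ and then specializes Corollary \ref{cor:cauchy}, which is precisely what you do, with slightly more detail supplied on the specialization of both sides.
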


Using the relationship between $\mathcal{G}(X/Y)$ and $\mathcal{G}(Y/X)$ developed in Proposition \ref{prop:flipover}, we see that $q^{(n-1)\frac{|\lambda-\mu|}{n}} \mathcal{G}_{\lambda'/\mu'}(X;q^{-1}) =  \mathcal{G}_{\lambda/\mu}(0/-X;q)$, so Corollary \ref{cor:cauchy} also recovers the Dual Cauchy identity for LLT polynomials.

\begin{cor}[Dual Cauchy identity for LLT] For an $n$-core $\delta$,
\begin{align*} \prod_{i,j} \prod_{t=0}^{n-1} (1+x_iy_jq^{2t}) &= \sum_\lambda q^{(n-1)\frac{|\lambda-\mu|}{n}} \mathcal{G}_{\lambda/\mu}(X/0;q)\cdot \mathcal{G}_{\lambda'/\mu'}(Y/0;q^{-1})\\
&= \sum_\lambda \mathcal{G}_{\lambda/\mu}(X/0;q)\cdot \mathcal{G}_{\lambda/\mu}(0/-Y;q).
\end{align*}
\end{cor}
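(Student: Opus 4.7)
The plan is to derive the identity from Corollary \ref{cor:cauchy} by a short specialization, then use Proposition \ref{prop:flipover} to pass between the two stated forms. (In the statement $\mu$ should be read as the $n$-core $\delta$, so the summand only contributes when $\lambda \supseteq \delta$ and $\widetilde{\lambda} = \delta$.)

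I would begin with Corollary \ref{cor:cauchy} taken with $\mu = \nu = \delta$ and specialize $Y \to 0$, $W \to 0$, and $Z \to -Y$. On the sum side, killing the vertical alphabet of the first polynomial recovers the ordinary LLT polynomial $\mathcal{G}_{\lambda/\delta}(X/0;q)$, while killing the horizontal alphabet of the second polynomial yields $\mathcal{G}_{\lambda/\delta}(0/-Y;q)$ after the sign substitution. On the product side every factor containing $y_j$ or $w_k$ collapses to $1$, leaving $\prod_{i,j}\prod_{t=0}^{n-1}(1+q^{2t}x_iy_j)$. This establishes the second equality directly.

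To obtain the first form, I would apply Proposition \ref{prop:flipover} with $X = 0$ and the vertical variable $-Y$, obtaining
\[
\mathcal{G}_{\lambda/\delta}(0/-Y;q) \;=\; (-1)^{|\lambda-\delta|/n}\, q^{(n-1)|\lambda-\delta|/n}\, \mathcal{G}_{\lambda'/\delta'}(-Y/0;q^{-1}).
\]
Since $\mathcal{G}_{\lambda'/\delta'}(\,\cdot\,/0;q^{-1})$ is homogeneous of degree $|\lambda'-\delta'|/n = |\lambda-\delta|/n$ in the horizontal variables (every contributing tableau uses exactly that many ribbons), pulling the sign out of $-Y$ produces another factor $(-1)^{|\lambda-\delta|/n}$ that cancels the one above. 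What remains is $q^{(n-1)|\lambda-\delta|/n}\,\mathcal{G}_{\lambda'/\delta'}(Y/0;q^{-1})$, which matches the summand in the first equality.

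There is no real obstacle once Corollary \ref{cor:cauchy} and Proposition \ref{prop:flipover} are in hand; the entire argument is bookkeeping. The only point that requires a moment of care is the double sign cancellation in the last step, which hinges on the basic observation that every ribbon tableau contributing to $\mathcal{G}_{\lambda/\delta}$ uses the fixed number $|\lambda-\delta|/n$ of ribbons, making the weight homogeneous of that degree in each alphabet separately.
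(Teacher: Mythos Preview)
Your proof is correct and follows essentially the same approach as the paper: specialize Corollary \ref{cor:cauchy} (setting $Y\to 0$, $W\to 0$, $Z\to -Y$) to obtain the second equality, then invoke Proposition \ref{prop:flipover} together with the homogeneity of the LLT polynomial to cancel the signs and pass to the first equality. The paper only sketches this in one line preceding the corollary, so your write-up is in fact more explicit about the sign cancellation than the original.
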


We may think of this relationship between Dual Cauchy and Cauchy identities for the LLT polynomials as the fact that not all semi-standard ribbon tableaux are also semi-standard (vertical) ribbon tableaux, so comparing horizontal to vertical fillings will return only finitely many options for tableaux shapes, but comparing horizontal to horizontal or vertical to vertical have infinitely many partitions that may be admissibly filled. However, since super ribbon tableaux combine the notions of ``horizontally" and ``vertically" semi-standard, the analogue of a Dual Cauchy Identity for super LLT polynomials is in fact the consequence of combining Proposition \ref{prop:flipover} with Corollary \ref{cor:cauchy}.

\begin{cor}[``Dual" Cauchy Identity for super LLT]
For $\delta$ an $n$-core, 
\begin{align*}\sum_{\lambda: \widetilde{\lambda}= \delta} q^{(n-1)\frac{|\lambda-\mu|}{n}} \mathcal{G}_{\lambda/\delta}(X/Y;q) \mathcal{G}_{\lambda'/\delta'}(W/Z;q^{-1}) &= \prod_{i,j,k,\ell} \prod_{t = 0}^{n-1} \frac{(1+q^{2t}x_iw_k)(1+q^{2t}y_jz_\ell)}{(1+ q^{2t}x_iz_\ell)(1+q^{2t}y_jw_k)}.
\end{align*}
\end{cor}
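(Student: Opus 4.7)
The plan is to obtain the Dual Cauchy identity directly from Corollary \ref{cor:cauchy} by using Proposition \ref{prop:flipover} to convert the conjugate-partition/inverse-$q$ factor $\mathcal{G}_{\lambda'/\delta'}(W/Z;q^{-1})$ into an ordinary $\mathcal{G}_{\lambda/\delta}$, and then absorbing the resulting sign by a homogeneity trick before applying the Cauchy identity.

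First, I apply Proposition \ref{prop:flipover} with the substitutions $\lambda\mapsto\lambda'$, $\mu\mapsto\delta'$, $(X,Y)\mapsto(W,Z)$, $q\mapsto q^{-1}$. Since $|\lambda'-\delta'|=|\lambda-\delta|$ and conjugation is an involution, this yields
$$\mathcal{G}_{\lambda'/\delta'}(W/Z;q^{-1}) \;=\; (-1)^{|\lambda-\delta|/n}\,q^{-(n-1)|\lambda-\delta|/n}\,\mathcal{G}_{\lambda/\delta}(Z/W;q).$$
Multiplying both sides by $q^{(n-1)|\lambda-\delta|/n}$ cancels the $q$-power that appears in front of the sum and leaves only a sign:
$$q^{(n-1)|\lambda-\delta|/n}\,\mathcal{G}_{\lambda'/\delta'}(W/Z;q^{-1}) \;=\; (-1)^{|\lambda-\delta|/n}\,\mathcal{G}_{\lambda/\delta}(Z/W;q).$$

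Second, I use the observation that $\mathcal{G}_{\lambda/\delta}(X/Y;q)$ is jointly homogeneous of degree $|\lambda-\delta|/n$ in the combined alphabet $\{x_i\}\cup\{y_j\}$: each super ribbon tableau of shape $\lambda/\delta$ has exactly $|\lambda-\delta|/n$ ribbons, and each ribbon contributes a single factor $x_i$ or $-y_j$. Consequently the sign can be absorbed as $(-1)^{|\lambda-\delta|/n}\mathcal{G}_{\lambda/\delta}(X/Y;q)=\mathcal{G}_{\lambda/\delta}(-X/-Y;q)$, so the left-hand side of the target identity equals
$$\sum_{\lambda:\widetilde{\lambda}=\delta}\mathcal{G}_{\lambda/\delta}(-X/-Y;q)\,\mathcal{G}_{\lambda/\delta}(Z/W;q).$$

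Third, I apply Corollary \ref{cor:cauchy} to this sum, taking its first alphabet pair to be $(-X,-Y)$ and its second alphabet pair to be $(Z,W)$ (i.e.\ with the roles of $W$ and $Z$ swapped relative to the original statement). Each of the four product factors $(1-q^{2t}\cdot\text{variable}\cdot\text{variable})$ picks up exactly one sign flip from the $-X,-Y$ substitution, turning minus signs into plus signs; the simultaneous swap $W\leftrightarrow Z$ in the second factor permutes the indices $k\leftrightarrow\ell$ in the product, and a trivial relabeling of dummy indices produces precisely the claimed right-hand side. The only real obstacle is the careful bookkeeping of which variable sits in the numerator versus denominator after the combined $(-X,-Y)$ and $W\leftrightarrow Z$ substitution, but this is purely formal: no new combinatorial input beyond the homogeneity observation and the two cited results is required.
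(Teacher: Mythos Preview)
Your proof is correct and follows essentially the same approach as the paper, which simply states that the identity ``is in fact the consequence of combining Proposition \ref{prop:flipover} with Corollary \ref{cor:cauchy}'' without supplying details. Your homogeneity observation to absorb the sign $(-1)^{|\lambda-\delta|/n}$ is exactly the missing ingredient needed to make that combination work, and your bookkeeping of the variable substitutions is accurate.
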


Examining the way we apply these operators to add or remove horizontal and vertical strips, from the view of particle interactions inside the individual ribbon, we can construct a lattice model whose partition function gives the super LLT polynomials. In Section \ref{sec:cauchyish}, we will then investigate how we can see relationships between the Cauchy identity above and a solvable \emph{Cauchy lattice model} built out of this lattice model.

\section{A Lattice Model for Super LLT Polynomials}\label{sec:latticemodel}

Many interesting classes of polynomials and special functions can be represented as the partition functions of \emph{solvable} lattice models, i.e. weighted functions on a grid system that satisfy Yang-Baxter equations. In this section, we show that super LLT polynomials appear as the partition functions of a \emph{n-ribbon} lattice model, an $n$-stranded generalization of the 5 vertex lattice model for Schur polynomials. 

Throughout this section, fix an integer $n \geq 1$.  To define a \emph{$n$-ribbon lattice model}, construct a two-dimensional square grid and place vertices at the intersections. Every vertical edge will be assigned a \emph{label} from the set $\{\wedge,\vee\}$; this label is sometimes also called a ``spin," but we will refrain from using that term to avoid confusion with the spin ($q$-power) of a ribbon. Every horizontal edge will be assigned an $n$-tuple of labels from the set $\{<,>\}$. However, distinguishing this model from merely being a collapsed version of a grid $n$ times taller, we will require a curious condition on these tuple labels: given a vertex $v$, denote the north, south, east and west entries of a vertex $v$ by $v_N, v_S,v_W$ and $v_S$ respectively.

\begin{defn}
	We say a $n$-ribbon vertex $v$ is \emph{admissible} if it satisfies the following conditions:
	\begin{enumerate}[nosep]
		\item The number of arrows pointing inwards equals the number of arrows pointing outwards.
		\item The labels $v_E(i) = v_W(i+1)$ for all $i \in \{1,2,\ldots, n-1\}$. Note that $v_E(n)$ need not be equal to $v_W(1)$.
	\end{enumerate}
\end{defn}

We may visualize this condition by expanding the tuple-labelled horizontal edge crossing a vertex into $n$ horizontal edges with one distinguished edge to represent that from $v_W(1)$ to $v_E(n)$ (see Figure \ref{fig:nvertex}). We will call this distinguished edge the \emph{twisted edge}, and every other horizontal edge will be called \emph{straight}. Then we can summarize the second condition concisely by requiring that arrows do not change along any straight (horizontal) edge.

\begin{figure}[h]
\centering
	\begin{tikzpicture}
		\draw [thick](0,0.7)--(0,-0.7);
		\draw [thick](0.7,0)--(-0.7,0);
		\node () at (0,1) {$v_N$};
		\node () at (0,-1) {$v_S$};
		\node () at (-1,0) {$v_W$};
		\node () at (1,0) {$v_E$};
		
	\end{tikzpicture}
	\quad\quad\quad
		\begin{tikzpicture}
			\coordinate (h) at (0,1);
		\coordinate (t) at (0,-1);
		\coordinate (r1) at (0.67,0.67);
		\coordinate (r2) at (0.67, 0.33);
		\coordinate (r3) at (0.67, 0);
		\coordinate (r4) at (0.67,-0.33);
		\coordinate (r5) at (0.67,-0.67);
		\coordinate (l1) at (-0.67,0.67);
		\coordinate (l2) at (-0.67, 0.33);
		\coordinate (l3) at (-0.67, 0);
		\coordinate (l4) at (-0.67,-0.33);
		\coordinate (l5) at (-0.67,-0.67);
	    \draw [thick](h)--(t);
	    \draw (l5) cos (0,0) sin (r1); 
	    \draw (l1)--(r2);
	    \draw [dotted] (l2)--(r3);
	    \draw (l3)--(r4);
	    \draw (l4)--(r5);

	    \node (vd) at (0,-1.2) {$v_S$};
	    \node (vu) at (0,1.2) {$v_N$};
	    
	    \node (vr1) at (1.16,0.67) {\tiny$v_E(n)$};
	    \node (vr2) at (1.4,0.33) {\tiny$v_E(n-1)$};
	    \node (vr4) at (1.16,-0.33) {\tiny$v_E(2)$};
	    \node (vr5) at (1.16,-0.67) {\tiny$v_E(1)$};
	    
	    \node (vl1) at (-1.16,0.67) {\tiny$v_W(n)$};
	    \node (vl2) at (-1.16,0) {\tiny$v_W(3)$};
	    \node (vl4) at (-1.16,-0.33) {\tiny$v_W(2)$};
	    \node (vl5) at (-1.16,-0.67) {\tiny$v_W(1)$};
	\end{tikzpicture}
	\caption{Two renderings of a $n$-ribbon vertex, where the one on the left uses tuple labels $v_W=(v_W(1),\cdots,v_W(n))$ and $v_E=(v_E(1)\cdots v_E(n))$ to collapse the strands. In both renderings, $v_N,v_S \in \{\wedge,\vee\}$ and $v_E(i),v_W(i)\in \{<,>\}$. We will tend to use the right one, as it is more diagramatically convenient for our applications.}
	\label{fig:nvertex}
\end{figure}

We call a set of label conditions on the boundary edges of the grid a \emph{system}, allowing the labels on interior edges to vary. A given assignment of labels for all these interior edges is called a \emph{state} of that system. We say a state is \emph{admissible} each of its vertices is admissible. Under the stranded rendering, we see that an admissible state has chains of arrows travelling $n$ vertices without changing label. That is, that every left (respectively right) arrow occurring at $v_E(n)$ for some vertex $v$ must be followed by $n-1$ consecutive left (respectively right) arrows as we travel to the right along its strand before it is allowed to change label (see Figure \ref{fig:left_arrows_following}). It is helpful to think of this condition in terms of a path model for particles: if we consider particles travelling through our lattice model along ``up" and ``left" arrows (where then edges labelled ``down" or ``right" have no particle), this condition tells us that particles \emph{must} travel in steps of $n$ vertices at a time.
 
\begin{figure}[h]

\centering
\begin{tikzpicture}
    \foreach \x in {0,...,4}{
\ribbonvertex{\x}{0}{\x}
}
\arrowleftcolor{1-0.5+0.02}{0+3/5-0.04}{red}
\arrowleftcolor{2-0.5}{0+1/5}{red}
\arrowleftcolor{3-0.5}{0-1/5}{red}
\arrowleftcolor{4-0.5}{0-3/5}{red}
\end{tikzpicture}
\caption{The $v_E(4)$ entry of the leftmost vertex is a left arrow, which must be following by 3 left arrows from the right.}
\label{fig:left_arrows_following}
\end{figure}

\begin{defn} Given a skew partition $\lambda/\mu$ and an positive integer $r$, define the system of boundary conditions $\mathcal{B}_{\lambda/\mu}$ on a $n$-ribbon lattice model, letting $\rho = (r,r-1,...,2,1)$, as: 
\begin{itemize}
    \item there are $r$ rows and $\lambda_1 + r$ columns,
    \item all edges on the left and right boundaries are labelled $>$,
    \item numbering the columns left to right starting with 1, edges on the bottom boundary that appear as parts of $\lambda + \rho$ are labelled $\wedge$, while all other edges on the top boundary are labelled $\vee$, and
    \item similarly, edges on the top boundary that appear as parts of $\mu + \rho$ are labelled $\wedge$, while all other edges on the bottom boundary are labelled $\vee$.
\end{itemize}
\end{defn}

We then define a generating function on the lattice model.

\begin{defn}\label{def:ribbonweights} Let $\wt(v)$ denote the Boltzmann weight of a vertex. Define the Boltzmann weight of a state $\mathfrak{s}$ to be the product over the weights of its vertices and the Boltzmann weight of a system, more commonly known as the \emph{partition function} of that system, to be the sum over the weights of its states. That is,
\[ \wt(\mathfrak{s}) = \prod_{v\in \mathfrak{s}} \wt(v) \quad\quad \text{ and } \quad\quad Z(B) := \wt(\mathcal{B}) = \sum_{\mathfrak{s} \in \mathcal{B}} \wt(\mathfrak{s}).\]
Any vertex that is not admissible has weight 0. Given any admissible vertex $v$, its weight will depend on whether it is in a \emph{horizontal} row or a \emph{vertical} row. We will suggestively label the horizontal strip rows with numbers $i \in A$ and the vertical strip rows with numbers $i' \in A'$, and assign spectral parameters $x_i$ to horizontal strip rows and $y_{i'}$ to vertical strip rows. The weights for vertices in a horizontal strip row $i$ are given in Figure \ref{fig:RibbonWts} and those for vertices in a vertical strip row $i'$ are given in Figure \ref{fig:columnRibbonWts}.
\end{defn}

We now define two sets of Boltzmann weights for the admissible vertices, all of which are monomials in $q$ and the a spectral parameter assigned to the row in which a vertex appears. We suggestively label the first set as \emph{horizontal strip row} weights, shown in Figure \ref{fig:RibbonWts}, and the latter as \emph{vertical strip row} weights, shown in Figure \ref{fig:columnRibbonWts}. Note that in these tables, we label the different types of vertices based on the edges pointing into the vertex from either the column edge or the twisted edge. We may generally think of the twisted edge as controlling the power of the spectral parameter and the straight edges as controlling the power of $q$.

\begin{figure}[H]
\centering 
 \begin{tabular}{| M{1.1cm} |M{1.47cm} | M{1.47cm} | M{1.47cm} | M{1.47cm} | M{1.47cm} | M{1.47cm} |} 
 \hline
Label & SW & NS & SE & NW & EW & NE \\
 \hline 
Vertex&\qquad
\begin{tikzpicture}[scale=0.8]
	\goodlookingvertex{-4.5}{1.5}{3}
	\arrowup{-4.5}{-2.5+3}
		\arrowright{-4.5+0.67}{1.5+0.64}
		\arrowup{-4.5}{-0.5+3}
		\arrowright{-4.5-0.67}{1.5-0.64}
		
		\draw [blue](-4.5+0.67,1.5-0.16) ellipse (0.22cm and 0.64cm);
\end{tikzpicture}     &

\qquad \begin{tikzpicture}[scale=0.8]
\goodlookingvertex{-1.5}{1.5}{1}
	\arrowdown{-1.5}{2.5}
		\arrowup{-1.5}{0.5}
		\arrowright{-1.5+0.67}{1.5+0.64}
		\arrowleft{-1.5-0.67}{1.5-0.64}
		\draw [blue](-1.5+0.67,1.5-0.16) ellipse (0.22cm and 0.64cm);
\end{tikzpicture}  & 

\qquad \begin{tikzpicture}[scale=0.8]
\goodlookingvertex{-1.5}{1.5}{1}
	\arrowup{-1.5}{2.5}
		\arrowup{-1.5}{0.5}
		\arrowleft{-1.5+0.67}{1.5+0.64}
		\arrowleft{-1.5-0.67}{1.5-0.64}
\end{tikzpicture} & 
\qquad \begin{tikzpicture}[scale=0.8]
\goodlookingvertex{-1.5}{1.5}{1}
	\arrowdown{-1.5}{2.5}
		\arrowdown{-1.5}{0.5}
		\arrowright{-1.5+0.67}{1.5+0.64}
		\arrowright{-1.5-0.67}{1.5-0.64}
\end{tikzpicture} & 
\qquad \begin{tikzpicture}[scale=0.8]
\goodlookingvertex{1.5}{1.5}{2}
	\arrowup{1.5}{2.5}
		\arrowleft{1.5+0.67}{1.5+0.64}
		\arrowdown{1.5}{0.5}
		\arrowright{1.5-0.67}{1.5-0.64}
		\draw [blue](1.5+0.67,1.5-0.16) ellipse (0.22cm and 0.64cm);
\end{tikzpicture} & 
\qquad \begin{tikzpicture}[scale=0.8]
\goodlookingvertex{4.5}{1.5}{2}
\arrowdown{4.5}{1.5+1}
		\arrowleft{4.5+0.67}{1.5+0.64}
		\arrowdown{4.5}{1.5-1}
		\arrowleft{4.5-0.67}{1.5-0.64}
	\draw [blue](4.5+0.67,1.5-0.16) ellipse (0.22cm and 0.64cm);
\end{tikzpicture}
\\

\hline
Weight & $q^s$ & $q^s$ & 0 & 1 & $q^sx_i$ & $q^sx_i$  \\
\hline

 \end{tabular}
 \caption{The weights of vertices lying in \emph{horizontal strip row} $i$ for the $n$ super ribbon lattice model, where $s$ is the number of left arrows in the blue circled area.}
 \label{fig:RibbonWts}
\end{figure}

\begin{figure}[h]
\centering 
 \begin{tabular}{| M{1.1cm} |M{1.47cm} | M{1.47cm} | M{1.47cm} | M{1.47cm} | M{1.47cm} | M{1.47cm} |} 
 \hline
Label & SW & NS & SE & NW & EW & NE \\
 \hline 
Vertex&\qquad
\begin{tikzpicture}[scale=0.8]
	\goodlookingvertex{-4.5}{1.5}{3}
	\arrowup{-4.5}{-2.5+3}
		\arrowright{-4.5+0.67}{1.5+0.64}
		\arrowup{-4.5}{-0.5+3}
		\arrowright{-4.5-0.67}{1.5-0.64}
		
		\draw [blue](-4.5+0.67,1.5-0.16) ellipse (0.22cm and 0.64cm);
\end{tikzpicture}     &

\qquad \begin{tikzpicture}[scale=0.8]
\goodlookingvertex{-1.5}{1.5}{1}
	\arrowdown{-1.5}{2.5}
		\arrowup{-1.5}{0.5}
		\arrowright{-1.5+0.67}{1.5+0.64}
		\arrowleft{-1.5-0.67}{1.5-0.64}
\end{tikzpicture}  & 

\qquad \begin{tikzpicture}[scale=0.8]
\goodlookingvertex{-1.5}{1.5}{1}
	\arrowup{-1.5}{2.5}
		\arrowup{-1.5}{0.5}
		\arrowleft{-1.5+0.67}{1.5+0.64}
		\arrowleft{-1.5-0.67}{1.5-0.64}
\end{tikzpicture} & 
\qquad \begin{tikzpicture}[scale=0.8]
\goodlookingvertex{-1.5}{1.5}{1}
	\arrowdown{-1.5}{2.5}
		\arrowdown{-1.5}{0.5}
		\arrowright{-1.5+0.67}{1.5+0.64}
		\arrowright{-1.5-0.67}{1.5-0.64}
\end{tikzpicture} & 
\qquad \begin{tikzpicture}[scale=0.8]
\goodlookingvertex{1.5}{1.5}{2}
	\arrowup{1.5}{2.5}
		\arrowleft{1.5+0.67}{1.5+0.64}
		\arrowdown{1.5}{0.5}
		\arrowright{1.5-0.67}{1.5-0.64}
\end{tikzpicture} & 
\qquad \begin{tikzpicture}[scale=0.8]
\goodlookingvertex{4.5}{1.5}{2}
\arrowdown{4.5}{1.5+1}
		\arrowleft{4.5+0.67}{1.5+0.64}
		\arrowdown{4.5}{1.5-1}
		\arrowleft{4.5-0.67}{1.5-0.64}
\end{tikzpicture}
\\
\hline
Weight & $q^s$ & $1$ & $-y_{i'}$ & 1 & $-y_{i'}$ & $0$  \\
\hline

 \end{tabular}
 \caption{The weights of vertices lying in \emph{vertical strip row} $i' \in A'$ for the $n$ super ribbon lattice model, where $s$ is the number of left arrows in the blue circled area.}
 \label{fig:columnRibbonWts}
\end{figure} 

In situations where we intend to consider the partition function of a system, we may further specify our boundary conditions to include a choice of spectral parameters. 

\begin{defn}
Given two ordered alphabets $A, A'$, set a total ordering between them that respects their individual orderings. Then, let $\mathcal{B}_{\lambda/\mu}(X/Y)$ be the family of lattice models with boundary conditions $\mathcal{B}_{\lambda/\mu}$ under the additional requirements:
\begin{itemize}
    \item \emph{horizontal} rows labelled $i\in A$ have spectral parameters $x_i$,
    \item \emph{vertical} rows labelled $i' \in A'$ have spectral parameters $y_{i'}$, and
    \item reading from top to bottom, rows are labelled in increasing order according to the total order.
\end{itemize}
\end{defn}

\begin{theorem}\label{mainthm}
Given a skew partition $\lambda/\mu$ and a total order on alphabets $A,A'$, we have
 \[Z\left(\mathcal{B}_{\l/\mu}(X/Y)\right)=\G_{\l/\mu}^{(n)}(X/Y;q).\] 
\end{theorem}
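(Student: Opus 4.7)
The plan is to establish a weight-preserving bijection between admissible states of $\mathcal{B}_{\lambda/\mu}(X/Y)$ and super $n$-ribbon tableaux of shape $\lambda/\mu$, then verify that the Boltzmann weight of each state matches the monomial contribution of the corresponding tableau. The proof generalizes the Curran--Yost-Wolff--Zhang--Zhang argument for LLT polynomials by handling horizontal-strip and vertical-strip rows uniformly.

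First I would set up the bijection using the standard Maya-diagram (edge-sequence) correspondence applied to each horizontal cross-section of the grid. Reading the vertical edge labels between rows $i$ and $i+1$ from left to right, the positions of $\wedge$-labels encode a shifted partition $\lambda^{(i)} + \rho$, giving a chain $\mu = \lambda^{(0)}, \lambda^{(1)}, \ldots, \lambda^{(r+s)} = \lambda$ forced by the prescribed bottom and top boundary conditions. To show each consecutive pair $\lambda^{(i)}/\lambda^{(i-1)}$ is an $n$-ribbon strip of the correct type, I would analyze the admissibility condition locally: the arrow-conservation rule combined with the stranded/straight-edge condition forces any leftward particle entering the east end of the twisted strand to propagate through $n-1$ straight strands before it can turn upward or exit. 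Tracking such a turn as the placement of one $n$-ribbon in $\lambda^{(i)}/\lambda^{(i-1)}$, one sees that the forbidden (weight-$0$) SE vertex in horizontal rows forbids two ribbons from stacking vertically, yielding a horizontal strip, while the forbidden NE vertex in vertical rows symmetrically yields a vertical strip. The top-to-bottom ordering of rows, labeled by the chosen total order on $A\cup A'$, matches the admissibility condition on super tableaux requiring that removing ribbons $\succ i$ produces a valid skew shape.

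Next I would verify the weight identity row by row. In a horizontal row $i\in A$, each ribbon in $\lambda^{(i)}/\lambda^{(i-1)}$ contains exactly one vertex at which the twisted strand carries a left-arrow entering without a matching exit (the NE or EW types), contributing one factor of $x_i$ and accounting for the monomial $x^{\wt(T)}$. In a vertical row $i'\in A'$, the analogous count of SE and EW vertices produces one factor of $-y_{i'}$ per ribbon, giving $(-y)^{\wt'(T)}$. All other admissible vertex types in both tables contribute $1$ or a pure power of $q$.

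The main obstacle, as is typical for such lattice-model realizations, is matching the $q$-exponent with $\spin(T)$. For each ribbon I would identify its trajectory through the grid as the path of the associated particle across the $n$ straight strands of its row, and observe that the circled left-arrow count defining the exponent $s$ in the SW, NS, EW, and NE weights records the number of straight strands the particle must traverse while another particle is "above" it on the same row. Matching this count to the height-minus-one definition of $\spin$ requires careful bookkeeping, but it ultimately reduces to the observation that each unit of spin corresponds to exactly one row-crossing interaction recorded at a single vertex. Summing over ribbons gives $\prod_{r\in T} q^{\spin(r)} = q^{\spin(T)}$, and combining with the previous paragraph yields
\[
Z(\mathcal{B}_{\lambda/\mu}(X/Y)) \;=\; \sum_{T \in {}^{s}RT_n(\lambda/\mu)} q^{\spin(T)} x^{\wt(T)}(-y)^{\wt'(T)} \;=\; \mathcal{G}_{\lambda/\mu}^{(n)}(X/Y;q),
\]
completing the proof.
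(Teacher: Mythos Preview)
Your proposal is correct and follows essentially the same approach as the paper: both establish a weight-preserving bijection between admissible states and super ribbon tableaux by reading off intermediate partitions $\lambda^{(i)}$ from the column labels between rows, using the weight-$0$ SE (resp.\ NE) vertices to force horizontal (resp.\ vertical) strip conditions, and matching the $q$-power to spin via particle-crossing counts. The paper organizes the argument slightly more hierarchically---first a lemma for a single ribbon, then propositions for horizontal and vertical strips separately, then stacking rows---whereas you proceed more directly, but the content is the same.
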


To prove Theorem \ref{mainthm}, we construct a weight preserving bijection between the set of all super ribbon tableaux and the set of all admissible ribbon lattices with the corresponding boundary conditions. We start with an important lemma about individual ribbons.

\begin{lemma}\label{lem:ribbonbij}
There is a weight-preserving bijection between $n$-ribbons and ribbon lattice models with one row and $n+1$ columns, using either the weights from Figure \ref{fig:RibbonWts} or from Figure \ref{fig:columnRibbonWts}.
\end{lemma}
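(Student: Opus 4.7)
The plan is to construct an explicit bijection $\Phi$ between $n$-ribbon shapes and pairs (boundary condition, admissible state) on the one-row, $(n{+}1)$-column lattice, and then verify that weights match. Via the classical abacus correspondence between partitions and bead sequences, an $n$-ribbon of spin $s$ corresponds to a single bead moving $n$ positions past $s$ stationary beads. Accordingly, I will send each ribbon $r$ of spin $s$ to the boundary where the top has $\wedge$ at column $1$ (the source of the moving bead) and at the $s$ columns in $\{2,\dots,n\}$ occupied by the stationary beads, the bottom has $\wedge$ at column $n{+}1$ (the destination) and at those same $s$ columns, all other vertical labels are $\vee$, and all horizontal boundary arrows are $>$. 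The $\binom{n-1}{s}$ choices of stationary-bead positions in $\{2,\dots,n\}$ exactly match the $\binom{n-1}{s}$ spin-$s$ ribbon shapes, so $\Phi$ is well-defined on ribbons.

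Next, I will show that each such boundary admits a unique admissible state by propagating left-to-right through the $n{+}1$ vertices. At each vertex $v_k$, the straight-strand rule $v_W(i{+}1) = v_E(i)$ for $i<n$, the inductively determined left-incoming labels, and the ins-equals-outs constraint jointly force all arrow values, including the twisted-edge label $v_k^E(n)$. This yields a unique sequence of vertex types drawn from $\{\mathrm{SW},\mathrm{NS},\mathrm{SE},\mathrm{NW},\mathrm{EW},\mathrm{NE}\}$, and hence a unique state.

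Finally, I will verify weights. With horizontal-row weights from Figure \ref{fig:RibbonWts}, the state's Boltzmann weight is the product of the $n{+}1$ vertex weights. Each time the moving bead ``crosses'' a stationary one, a $q$-factor appears via a straight-left arrow in the blue-circled region at the corresponding vertex; summing over the $s$ crossings produces $q^{\spin(r)}$. In addition, exactly one vertex (of type EW or NE, at the source column of the moving bead) carries the spectral factor $x_i$, giving a total state weight of $q^{\spin(r)} x_i$, which matches the ribbon's weight in a horizontal strip. The vertical-row case (Figure \ref{fig:columnRibbonWts}) is analogous: type SW now supplies the $q^s$-factors and types SE/EW contribute $-y_{i'}$, yielding a total of $q^{\spin(r)}(-y_{i'})$.

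The main obstacle is this weight verification---specifically, confirming that each crossing between the moving bead and a stationary one produces exactly one $q$-factor in the appropriate blue-circled region, and that the spectral factor appears at exactly one vertex. This requires tracking how the moving bead's strand shifts through each vertex under the twisted-edge rule and checking that straight-left arrows accumulate precisely at the stationary-bead columns. Once this bookkeeping is in place, bijectivity follows from the uniqueness of the state at each boundary condition, completing the proof.
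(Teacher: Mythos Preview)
Your proposal is correct and follows essentially the same route as the paper's proof: both use the abacus/edge-sequence correspondence to determine the boundary, argue uniqueness of the one-row state, and verify the weight by counting crossings of the single moving strand with the stationary beads. The paper is a bit more precise in pinning down the vertex types---column $1$ is always type EW, column $n{+}1$ is always type NS, and the intermediate columns are only SW or NW (never NE or SE)---which makes the weight bookkeeping immediate and explains at once why the horizontal and vertical weight tables give the same answer for a single ribbon.
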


\begin{proof}
The bijection itself is easy to describe: we send the $n$-ribbon of shape $\lambda/\mu$ to the one-row lattice model with boundary conditions $\mathcal{B}_{\lambda/\mu}$. To show that it is weight-preserving is a bit more complicated, and we take this time to develop some machinery that will be eminently useful later.

Consider an $n$-ribbon of shape $\lambda/\mu$. Starting in the bottom right corner, label each vertical (resp. horizontal) edge red (resp. blue) and number their positions increasingly along each of the upper and lower edges of the ribbon. We call this the \emph{edge sequence path} of the ribbon. Note that red edges occur on labels that occur as parts of $\mu + \rho$ on the top boundary and on labels in $\lambda + \rho$ on the bottom boundary of the ribbon, and the largest label will always be $n+1$. We may therefore think of the bijection heuristically as ``stretching" the ribbon straight to lay atop the lattice model.

\begin{figure}[h]
	\centering
	\begin{tikzpicture}[scale=0.8]
		\draw (0,0)--(0,1)--(0,2)--(3,2)--(3,3)--(4,3)--(4,1)--(1,1)--(1,0)--(0,0);
		\coordinate (01) at (0.5,0);
		\coordinate (02) at (1,0.5);
		\coordinate (03) at (1.5,1);
		\coordinate (04) at (2.5,1);
		\coordinate (05) at (3.5,1);
		\coordinate (06) at (4,1.5);
		\coordinate (07) at (4,2.5);
		
		\coordinate (11) at (0,0.5);
		\coordinate (12) at (0,1.5);
		\coordinate (13) at (0.5,2);
		\coordinate (14) at (1.5,2);
		\coordinate (15) at (2.5,2);
		\coordinate (16) at (3,2.5);
		\coordinate (17) at (3.5,3);
		
		\node [circle, fill=blue,inner sep=0pt,minimum size=5pt] at (01) {};
		\node [circle, fill=red,inner sep=0pt,minimum size=5pt] at (02) {};
		\node [circle, fill=blue,inner sep=0pt,minimum size=5pt] at (03) {};
		\node [circle, fill=blue,inner sep=0pt,minimum size=5pt] at (04) {};
		\node [circle, fill=blue,inner sep=0pt,minimum size=5pt] at (05) {};
		\node [circle, fill=red,inner sep=0pt,minimum size=5pt] at (06) {};
        \node [circle, fill=red,inner sep=0pt,minimum size=5pt] at (07) {};
        \node [circle, fill=red,inner sep=0pt,minimum size=5pt] at (11) {};
        \node [circle, fill=red,inner sep=0pt,minimum size=5pt] at (12) {};
        \node [circle, fill=blue,inner sep=0pt,minimum size=5pt] at (13) {};
        \node [circle, fill=blue,inner sep=0pt,minimum size=5pt] at (14) {};
        \node [circle, fill=blue,inner sep=0pt,minimum size=5pt] at (15) {};
        \node [circle, fill=red,inner sep=0pt,minimum size=5pt] at (16) {};
        \node [circle, fill=blue,inner sep=0pt,minimum size=5pt] at (17) {};
        
        \node at (0.5,-0.35) {1};
        \node at (1.25,0.35){2};
        \node at (1.65,0.65) {3};
        \node at (2.5,0.65) {4};
        \node at (3.5,0.65) {5};
        \node at (4.35,1.5) {6};
        \node at (4.35,2.5) {7};
        
        \node at (-0.35,0.5) {1};
        \node at (-0.35,1.5){2};
        \node at (0.5,2.35) {3};
        \node at (1.5,2.35) {4};
        \node at (2.25,2.35) {5};
        \node at (2.65,2.75) {6};
        \node at (3.5,3.35) {7};

		\draw [rounded corners=12,color=red] (4,2.5)--(3.5,2.5) -- (3.5,1.5)--(0.5,1.5)--(0.5,0.5)--(0,0.5);
		\draw [color=blue] (12)--(02);
		\draw [color=blue] (16)--(06);	
		\node [circle, fill=red,inner sep=0pt,minimum size=5pt] at (02) {};
		 \node [circle, fill=red,inner sep=0pt,minimum size=5pt] at (12) {};
		 \node [circle, fill=red,inner sep=0pt,minimum size=5pt] at (16) {};
		 \node [circle, fill=red,inner sep=0pt,minimum size=5pt] at (06) {};
	\end{tikzpicture}
	\quad\quad\quad
	\begin{tikzpicture}[xscale=1, yscale = 0.9]
    \foreach \x in {1,...,7}{
\ribbonvertexsix{\x}{0}{\x}
}

\arrowdowncolor{1}{-1.3}{blue}
\arrowupcolor{2}{-1.3}{red}
\arrowdowncolor{3}{-1.3}{blue}
\arrowdowncolor{4}{-1.3}{blue}
\arrowdowncolor{5}{-1.3}{blue}
\arrowupcolor{6}{-1.3}{red}
\arrowupcolor{7}{-1.3}{red}

\arrowupcolor{1}{1.3}{red}
\arrowupcolor{2}{1.3}{red}
\arrowdowncolor{3}{1.3}{blue}
\arrowdowncolor{4}{1.3}{blue}
\arrowdowncolor{5}{1.3}{blue}
\arrowupcolor{6}{1.3}{red}
\arrowdowncolor{7}{1.3}{blue}

\arrowrightcolor{1-0.5}{0+1}{blue}
\arrowrightcolor{1-0.5}{0+3/5}{blue}
\arrowrightcolor{1-0.5}{0+1/5}{blue}
\arrowrightcolor{1-0.5}{0-1/5}{blue}
\arrowrightcolor{1-0.5}{-3/5}{blue}
\arrowrightcolor{1-0.5-0.02}{-1+0.04}{blue}

\arrowleftcolor{2-0.5+0.02}{0+1-0.04}{red}
\arrowrightcolor{2-0.5}{0+3/5}{blue}
\arrowrightcolor{2-0.5}{0+1/5}{blue}
\arrowrightcolor{2-0.5}{0-1/5}{blue}
\arrowrightcolor{2-0.5}{0-3/5}{blue}
\arrowrightcolor{2-0.5-0.02}{0-1+0.04}{blue}

\arrowrightcolor{3-0.5+0.02}{0+1-0.04}{blue}
\arrowleftcolor{3-0.5}{0+3/5}{red}
\arrowrightcolor{3-0.5}{0+1/5}{blue}
\arrowrightcolor{3-0.5}{0-1/5}{blue}
\arrowrightcolor{3-0.5}{0-3/5}{blue}
\arrowrightcolor{3-0.5-0.02}{0-1+0.04}{blue}

\arrowrightcolor{4-0.5+0.02}{0+1-0.04}{blue}
\arrowrightcolor{4-0.5}{0+3/5}{blue}
\arrowleftcolor{4-0.5}{0+1/5}{red}
\arrowrightcolor{4-0.5}{0-1/5}{blue}
\arrowrightcolor{4-0.5}{0-3/5}{blue}
\arrowrightcolor{4-0.5-0.02}{0-1+0.04}{blue}

\arrowrightcolor{5-0.5+0.02}{0+1-0.04}{blue}
\arrowrightcolor{5-0.5}{0+3/5}{blue}
\arrowrightcolor{5-0.5}{0+1/5}{blue}
\arrowleftcolor{5-0.5}{0-1/5}{red}
\arrowrightcolor{5-0.5}{0-3/5}{blue}
\arrowrightcolor{5-0.5-0.02}{0-1+0.04}{blue}

\arrowrightcolor{6-0.5+0.02}{0+1-0.04}{blue}
\arrowrightcolor{6-0.5}{0+3/5}{blue}
\arrowrightcolor{6-0.5}{0+1/5}{blue}
\arrowrightcolor{6-0.5}{0-1/5}{blue}
\arrowleftcolor{6-0.5}{0-3/5}{red}
\arrowrightcolor{6-0.5-0.02}{0-1+0.04}{blue}

\arrowrightcolor{7-0.5+0.02}{0+1-0.04}{blue}
\arrowrightcolor{7-0.5}{0+3/5}{blue}
\arrowrightcolor{7-0.5}{0+1/5}{blue}
\arrowrightcolor{7-0.5}{0-1/5}{blue}
\arrowrightcolor{7-0.5}{0-3/5}{blue}
\arrowleftcolor{7-0.5-0.02}{0-1+0.04}{red}

\arrowrightcolor{8-0.5+0.02}{0+1-0.04}{blue}
\arrowrightcolor{8-0.5}{0+3/5}{blue}
\arrowrightcolor{8-0.5}{0+1/5}{blue}
\arrowrightcolor{8-0.5}{0-1/5}{blue}
\arrowrightcolor{8-0.5}{0-3/5}{blue}
\arrowrightcolor{8-0.5-0.02}{0-1+0.04}{blue}

\node () at (1,-1.8) {$x$};
\node () at (2,-1.8) {$q$};
\node () at (3,-1.8) {$1$};
\node () at (4,-1.8) {$1$};
\node () at (5,-1.8) {$1$};
\node () at (6,-1.8) {$q$};
\node () at (7,-1.8) {$1$};

\node () at (1,1.8) {$1$};
\node () at (2,1.8) {$2$};
\node () at (3,1.8) {$3$};
\node () at (4,1.8) {$4$};
\node () at (5,1.8) {$5$};
\node () at (6,1.8) {$6$};
\node () at (7,1.8) {$7$};

\end{tikzpicture}
	\caption{The 6-ribbon for $\lambda = (4,4,1), \mu = (3,0,0)$ and its corresponding lattice model state with weights included.}
	\label{fig:edgeseq}
\end{figure}

Claim: there is only one state for this model. Since a ribbon may not contain any 2 by 2 squares, we may assign a particle action to the red edges of the ribbon: along each $n$-ribbon, the red dot in the upper right corner (labelled $n+1$) will travel to the lower left corner (labelled $1$), and all the remaining reds will travel up and to the left while retaining the same label. Thus, aside from column 1 and column $n+1$, all other columns have the same label on top and on bottom. Coupling this with the fact that the side boundaries are all right arrows, the inner inner labels of the lattice model are fully determined: the vertex in column 1 must be type EW, that in column $n+1$ must be type NS, and the remainder are either type NW (if column $i$ is blue in the ribbon) or type SW (if column $i$ is red in the ribbon). The remaining labels are determined by the fact that straight edges don't change label across a vertex, so the strand from column 1 to column $n+1$ is entirely left arrows and all the rest are right arrows.

Vertex types in hand, we return to the weighting. There are no SE or NE vertices in this model, and the only internal left arrows occur on type SW vertices, so the sets of weights in Figures \ref{fig:RibbonWts} and \ref{fig:columnRibbonWts} give the same partition function for this model up to choice of the spectral parameter $z$, which we specialize to $x_i$ or $-y_{i'}$ respectively. Since we have a single state, the partition function is a monomial: the EW vertex gives a power of $z$ and each SW vertex a power of $q$, while the NW and NS vertices don't change the weight, so our lattice model weight is $q^{\# SW}z$.

Recall that a lone ribbon has weight $q^{\text{height} - 1}z$, where $z$ is the desired spectral parameter. Referring to Figure \ref{fig:edgeseq}, note that the spin $q^{\text{height} - 1}$ counts the number of intersections between particles (i.e. the number of red columns between 1 and $n+1$), which are precisely the columns that become SW vertices.
\end{proof}

\begin{prop}\label{prop:hribbon}
Using the weights from Figure \ref{fig:RibbonWts}, there is a weight-preserving bijection between horizontal $n$-ribbon strips and one-row lattice models with non-zero weight.
\end{prop}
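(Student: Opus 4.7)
The plan is to extend the single-ribbon bijection of Lemma \ref{lem:ribbonbij} ribbon-by-ribbon across the strip. Given a horizontal $n$-ribbon strip $T$ of shape $\lambda/\mu$, trace the edge-sequence path along the boundary between $\mu$ and $\lambda$, marking each edge red (vertical) or blue (horizontal). This determines the top and bottom boundary labels of the one-row model, matching $\mathcal{B}_{\lambda/\mu}$. For each ribbon $r$ in $T$, I apply the one-ribbon construction locally to the $n+1$ consecutive columns spanned by $r$: the column containing $r$'s bottom-left cell becomes an EW vertex, its top-right column becomes an NS vertex, and the intermediate columns become NW or SW according to whether the corresponding path edge is blue or red. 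Columns disjoint from every ribbon are assigned NW vertices.

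To show this is a bijection, I construct an inverse: given a non-zero weight state of $\mathcal{B}_{\lambda/\mu}$, the top and bottom boundaries recover $\lambda$ and $\mu$. Identify EW vertices as ribbon heads and follow the left-arrow particle paths $n$ columns to the left along the twisted strands to recover each ribbon. Weight preservation then follows ribbon-by-ribbon from Lemma \ref{lem:ribbonbij}: each ribbon contributes $q^{\spin(r)} x_i$, with the EW vertex giving $x_i$ and $\spin(r)$ many SW vertices giving $q$'s. Columns not touched by any ribbon become NW or NS vertices of weight $q^0 = 1$, since no particle is in transit there. Multiplying over all ribbons yields exactly $q^{\spin(T)} x_i^{\#\text{ribbons}}$, which is the weight of $T$ viewed as a single horizontal row of ribbons.

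The principal obstacle is verifying that non-zero weight states correspond precisely to horizontal strips, rather than arbitrary ribbon tilings of $\lambda/\mu$; this hinges on the SE vertex having weight zero. Heuristically, an SE vertex represents a particle that enters from the north and exits to the south along a single column, which in the ribbon picture forces a ribbon whose top-right corner lies strictly below the northern boundary of $\lambda/\mu$, violating the horizontal strip condition. Conversely, when every ribbon's top-right cell lies on the northern boundary, particles only exit the twisted strand through NS columns at path-tops, and a careful case analysis on admissibility shows no SE configuration can arise. I expect this case analysis, rather than the weight calculation, to be the technical heart of the argument.
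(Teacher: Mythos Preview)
Your approach follows the same overall strategy as the paper's proof, but there is a genuine gap: you are implicitly assuming that distinct ribbons in a horizontal strip occupy disjoint sets of columns. This is false. In the paper's running example (the strip of Figure~\ref{exmesp}), four $4$-ribbons sit in only twelve columns, so many columns lie under two or more ribbons. When this happens your local rule breaks down. Specifically, the column that you would label EW for one ribbon (its bottom-left endpoint) can simultaneously be an interior column of another ribbon whose particle path runs through it; the resulting vertex is type NE, not EW. Column 5 of Figures~\ref{exmesp} and~\ref{onerowlattice} is exactly such a vertex. Your inverse map ``identify EW vertices as ribbon heads'' therefore misses ribbons whose heads are NE vertices, and your list of admissible vertex types is incomplete.

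This same overlap invalidates the ribbon-by-ribbon weight computation. In a single ribbon the $q$-power on an SW vertex counts only that ribbon's own left arrows, but in a strip the $q^s$ weights on SW, NS, EW, and NE vertices count left arrows contributed by \emph{all} ribbons crossing that column, so the state weight is not simply the product of the Lemma~\ref{lem:ribbonbij} weights. The paper fixes both issues at once: rather than assigning vertex types locally, it assigns only the left-arrow strands for each ribbon and then reads off vertex types from the global arrow configuration. The total $q$-power is then interpreted as the number of path intersections, which equals the total spin, and the $x_i$-contributions are tracked by both EW and NE vertices (the latter arising precisely when one ribbon's exiting particle continues as another ribbon's path). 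Your SE-vertex discussion is correct and is indeed the mechanism singling out horizontal strips; the missing ingredient is the treatment of NE vertices and cross-ribbon $q$-interactions.
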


\begin{proof}
Sends a horizontal ribbon strip of shape $\lambda/\mu$ to the one row model with boundary conditions $B_{\lambda/\mu}$. To show this map is a bijection, we define an assignment of internal arrows that turns a ribbon filling of this strip into an admissible state by ``peeling off" successive ribbons. Starting at the rightmost ribbon in the strip, assign left arrows as in Lemma \ref{lem:ribbonbij} for each ribbon, momentarily ignoring the right arrows. That is, a ribbon with edge sequence labels $k,...,k+n$ for some $k$ will assign left arrows to the lattice model edges $v_{k,E}(n), v_{k+1,E}(n-1),....,v_{k+n,W}(1)$, which comprise the straight edge strand connecting vertices $k$ and $k+n$. Once all ribbons have been considered, fill the remaining edges with right arrows. Since no two ribbons start (or end) with the same labels, these choices of left arrow edges will be distinct over all ribbons. Since the path of left arrows assigned by the last ribbon starts with $v_E(n)$ on $v_1$ and that of the first ribbon ends with $v_E(1)$ on the last vertex, this assignment of left arrows will not impede the assignment of straight edge right arrow paths dictated by the left and right boundary conditions. 

\begin{figure*}[h]
	\centering
	\begin{tikzpicture}[scale=0.8]
		\draw [rounded corners=12,color=orange,thick] (7,3.5)--(4.5,3.5)--(4.5,2.5)--(4,2.5);
		\draw [color=orange,thick] (4,2.5) cos (3.5,3) sin (3,3.5);
		\draw [color=red,thick] (5,2.5) cos (4.5,3) sin (4,3.5);
		\draw [color=green,thick] (3,1.5) cos (2.5,2) sin (2,2.5);
		\draw [rounded corners=12,color=red,thick] (4,3.5)--(3.5,3.5)--(3.5,2.5)--(2.5,2.5)--(2.5,1.5)--(2,1.5);
		\draw [rounded corners=12,color=green,thick] (2,2.5)--(1.5,2.5)--(1.5,0.5)--(0,0.5);
		\draw [color=yellow,thick] (2,0.5) cos (1,1.5);
		\draw [color=red,thick] (2,1.5) cos (1.5,2) sin (1,2.5);
		\draw [color=yellow,thick] (0,2.5) cos (1,1.5);
		\draw [color=red,rounded corners=12,thick] (1,2.5)--(0.5,2.5)--(0.5,1.5)--(-0.5,1.5)--(-0.5,0.5)--(-1,0.5);
		\draw [color=green,thick] (0,0.5) cos (-0.5,1) sin (-1,1.5);
		\draw (0,0)--(2,0)--(2,1)--(3,1)--(3,2)--(5,2)--(5,3)--(7,3)--(7,4)--(3,4)--(3,3)--(1,3)--(1,1)--(0,1)--cycle;
		\draw (2,1)--(2,3);
		\draw (4,2)--(4,4);
		\draw (1,3)--(0,3)--(0,2)--(-1,2)--(-1,1)--(-1,0)--(0,0);
		\node () at (-1.25,0.5) {1};
		\node () at (-1.25,1.5) {2};
		\node () at (-0.75,2.25) {3};
		\node () at (-0.25,2.75) {4};
		\node () at (0.5,3.25) {5};
		\node () at (1.5,3.25) {6};
		\node () at (2.25,3.25) {7};
		\node () at (2.75,3.75) {8};
		\node () at (3.5,4.25) {9};
		\node () at (4.5,4.25) {10};
		\node () at (5.5,4.25) {11};
		\node () at (6.5,4.25) {12};
		
		\node () at (-0.5,-0.25) {1};
		\node () at (0.5,-0.25) {2};
		\node () at (1.5,-0.25) {3};
		\node () at (2.25,0.25) {4};
		\node () at (2.75,0.75) {5};
		\node () at (3.25,1.25) {6};
		\node () at (3.75,1.75) {7};
		\node () at (4.5,1.75) {8};
		\node () at (5.25,2.25) {9};
		\node () at (5.75,2.75) {10};
		\node () at (6.5,2.75) {11};
		\node () at (7.25,3.5) {12};
	\end{tikzpicture}
	\caption{The edge sequence path of a horizontal strip.}
	\label{exmesp}
\end{figure*}

\begin{figure*}[h]

\centering
\begin{tikzpicture}[scale=0.85]
    \foreach \x in {0,1,...,11}{
\ribbonvertex{\x}{0}{\x}
}

\arrowdowncolor{0}{-1}{blue}
\arrowdowncolor{1}{-1}{blue}
\arrowdowncolor{2}{-1}{blue}
\arrowupcolor{3}{-1}{yellow}
\arrowdowncolor{4}{-1}{blue}
\arrowupcolor{5}{-1}{green}
\arrowdowncolor{6}{-1}{blue}
\arrowdowncolor{7}{-1}{blue}
\arrowupcolor{8}{-1}{red}
\arrowdowncolor{9}{-1}{blue}
\arrowdowncolor{10}{-1}{blue}
\arrowupcolor{11}{-1}{orange}

\arrowupcolor{0}{1}{red}
\arrowupcolor{1}{1}{green}
\arrowdowncolor{2}{1}{blue}
\arrowupcolor{3}{1}{yellow}
\arrowdowncolor{4}{1}{blue}
\arrowdowncolor{5}{1}{blue}
\arrowdowncolor{6}{1}{blue}
\arrowupcolor{7}{1}{orange}
\arrowdowncolor{8}{1}{blue}
\arrowdowncolor{9}{1}{blue}
\arrowdowncolor{10}{1}{blue}
\arrowdowncolor{11}{1}{blue}

\arrowrightcolor{0-0.5}{0+3/5}{blue}
\arrowrightcolor{0-0.5}{0+1/5}{blue}
\arrowrightcolor{0-0.5}{0-1/5}{blue}
\arrowrightcolor{0-0.5-0.03}{0-3/5+0.04}{blue}

\arrowleftcolor{1-0.5+0.02}{0+3/5-0.04}{red}
\arrowrightcolor{1-0.5}{0+1/5}{blue}
\arrowrightcolor{1-0.5}{0-1/5}{blue}
\arrowrightcolor{1-0.5-0.02}{0-3/5+0.04}{blue}

\arrowleftcolor{2-0.5+0.02}{0+3/5-0.04}{green}
\arrowleftcolor{2-0.5}{0+1/5}{red}
\arrowrightcolor{2-0.5}{0-1/5}{blue}
\arrowrightcolor{2-0.5-0.02}{0-3/5+0.04}{blue}

\arrowrightcolor{3-0.5}{0+3/5}{blue}
\arrowleftcolor{3-0.5}{0+1/5}{green}
\arrowleftcolor{3-0.5}{0-1/5}{red}
\arrowrightcolor{3-0.5-0.02}{0-3/5+0.04}{blue}

\arrowrightcolor{4-0.5}{0+3/5}{blue}
\arrowrightcolor{4-0.5}{0+1/5}{blue}
\arrowleftcolor{4-0.5}{0-1/5}{green}
\arrowleftcolor{4-0.5}{0-3/5}{red}

\arrowleftcolor{5-0.5+0.02}{0+3/5-0.04}{red}
\arrowrightcolor{5-0.5}{0+1/5}{blue}
\arrowrightcolor{5-0.5}{0-1/5}{blue}
\arrowleftcolor{5-0.5}{0-3/5}{green}

\arrowrightcolor{6-0.5}{0+3/5}{blue}
\arrowleftcolor{6-0.5}{0+1/5}{red}
\arrowrightcolor{6-0.5}{0-1/5}{blue}
\arrowrightcolor{6-0.5-0.02}{0-3/5+0.04}{blue}

\arrowrightcolor{7-0.5}{0+3/5}{blue}
\arrowrightcolor{7-0.5}{0+1/5}{blue}
\arrowleftcolor{7-0.5}{0-1/5}{red}
\arrowrightcolor{7-0.5-0.02}{0-3/5+0.04}{blue}

\arrowleftcolor{8-0.5+0.02}{0+3/5-0.04}{orange}
\arrowrightcolor{8-0.5}{0+1/5}{blue}
\arrowrightcolor{8-0.5}{0-1/5}{blue}
\arrowleftcolor{8-0.5}{0-3/5}{red}

\arrowrightcolor{9-0.5}{0+3/5}{blue}
\arrowleftcolor{9-0.5}{0+1/5}{orange}
\arrowrightcolor{9-0.5}{0-1/5}{blue}
\arrowrightcolor{9-0.5-0.02}{0-3/5+0.04}{blue}

\arrowrightcolor{10-0.5}{0+3/5}{blue}
\arrowrightcolor{10-0.5}{0+1/5}{blue}
\arrowleftcolor{10-0.5}{0-1/5}{orange}
\arrowrightcolor{10-0.5-0.02}{0-3/5+0.04}{blue}

\arrowrightcolor{11-0.5}{0+3/5}{blue}
\arrowrightcolor{11-0.5}{0+1/5}{blue}
\arrowrightcolor{11-0.5}{0-1/5}{blue}
\arrowleftcolor{11-0.5}{0-3/5}{orange}

\arrowrightcolor{12-0.5}{0+3/5}{blue}
\arrowrightcolor{12-0.5}{0+1/5}{blue}
\arrowrightcolor{12-0.5}{0-1/5}{blue}
\arrowrightcolor{12-0.5-0.03}{0-3/5+0.04}{blue}

\node () at (0,-1.8) {$x$};
\node () at (1,-1.8) {$qx$};
\node () at (2,-1.8) {$1$};
\node () at (3,-1.8) {$q^2$};
\node () at (4,-1.8) {$qx$};
\node () at (5,-1.8) {$q$};
\node () at (6,-1.8) {$1$};
\node () at (7,-1.8) {$qx$};
\node () at (8,-1.8) {$q$};
\node () at (9,-1.8) {$1$};
\node () at (10,-1.8) {$1$};
\node () at (11,-1.8) {$1$};

\node () at (0,1.5) {$1$};
\node () at (1,1.5) {$2$};
\node () at (2,1.5) {$3$};
\node () at (3,1.5) {$4$};
\node () at (4,1.5) {$5$};
\node () at (5,1.5) {$6$};
\node () at (6,1.5) {$7$};
\node () at (7,1.5) {$8$};
\node () at (8,1.5) {$9$};
\node () at (9,1.5) {$10$};
\node () at (10,1.5) {$11$};
\node () at (11,1.5) {$12$};
\end{tikzpicture}
\caption{The single row lattice state corresponding to the horizontal strip in Figure \ref{exmesp}, with colors indicating movements of particles. One can verify that the partition function is $q^7x^4$.}
\label{onerowlattice}
\end{figure*}

To check that this is an admissible lattice state, we need to check that each vertex is one of our admissible types: our assignment scheme ensures that straight edges won't change label (i.e. that $v_E(i) = v_W(i+1)$ for all vertices $v$), so it suffices to check the twisted edge/vertical edge combinations. As in Lemma \ref{lem:ribbonbij}, we may categorize by the coloring on the edge sequence: now we extend the edge sequence through the individual ribbons involved. Reading from bottom boundary to top boundary, if a column remains blue (no particle) throughout the entire strip, it generates a NW vertex; if remains red (particle) throughout the entire strip, it gives a SW vertex. If it begins red and ends blue, it makes a NS vertex, whereas if it begins blue and ends red it makes an EW vertex. The final option is for a label to start blue, turn red, and return to blue, which occurs when a moving path passes through an intermediate ribbon to become another ribbon's moving path (see column 5 of Figures \ref{exmesp} and \ref{onerowlattice}), which gives a NE vertex. Notice that there are no type SE vertices, so this lattice will have non-zero weight.

We may reverse this entire process to produce a ribbon filling from a lattice model: starting with the rightmost particle entering the lattice, start at the bottom boundary label for that column and step the ribbon left for each column it passes with no other particles on the vertical or twisted edge, and down if it ``encounters" another particle. Repeat this process for the remainder of the unfilled horizontal strip. Here it is important that the SE vertex is excluded, because this vertex comes from allowing the bottom-left-most square of a ribbon to touch the upper boundary of another ribbon rather than the bottom of the ribbon strip, which would violate the horizontal ribbon strip condition.

That the map is weight preserving follows naturally from the path interpretation of the ribbon lattices. In Lemma \ref{lem:ribbonbij}, the power of $x_i$ of the ribbon was assigned to the vertex where the long path exited the ribbon (type EW). In a ribbon strip, we see that path can exit the ribbon in either an EW vertex (if it is exiting the whole strip) or a NE vertex (if it is continuing into another ribbon), so both of these vertices contribute to the power of $x_i$. On the other hand, the power of $q$ counts the number of intersection of the paths, which is exactly the value of the spin as before, and all vertex types with path intersections (all except type NW) increment these powers of $q$ accordingly.
\end{proof}

\begin{prop}\label{prop:vribbon}
Using the weights from Figure \ref{fig:columnRibbonWts}, there is a weight-preserving bijection between vertical $n$-ribbon strips and one-row lattice models with non-zero weight.
\end{prop}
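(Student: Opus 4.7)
The plan is to mirror the proof of Proposition \ref{prop:hribbon} almost verbatim, with the roles of the SE and NE vertices exchanged. Define the bijection by sending a vertical $n$-ribbon strip $T$ of shape $\lambda/\mu$ to the one-row lattice state with boundary conditions $\mathcal{B}_{\lambda/\mu}$: starting from the rightmost ribbon, assign the single-ribbon left-arrow path of Lemma \ref{lem:ribbonbij} to each ribbon in turn, and then fill all remaining interior edges with right arrows. As in the horizontal case, the distinctness of the edge-sequence labels of distinct ribbons ensures the assignments never collide.

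The key structural change is that in the vertical weight table (Figure \ref{fig:columnRibbonWts}) the forbidden vertex is NE rather than SE. Geometrically, this swap is exactly the conjugate of the horizontal case: the vertical strip rule requires that the \emph{bottom-left} square of each ribbon touch the northern boundary, rather than the top-right square. Under the edge-sequence viewpoint, a column that changes color twice (the ``pass-through'' configuration, which produced NE in the horizontal case) here becomes a red-blue-red pattern, which reads as an SE vertex. I would verify admissibility by the same column-by-column casework: monochromatic blue yields NW, monochromatic red yields SW, red-then-blue yields NS, blue-then-red yields EW, and the intermediate pass-through yields SE. The absence of any NE vertex in the resulting state is precisely equivalent to the vertical strip condition, dual to how the absence of SE characterized horizontal strips.

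Bijectivity follows by the reverse construction: given an admissible state whose weight is nonzero (i.e.\ no NE vertices), trace particle paths from right to left across the row; each maximal path recovers a single vertical $n$-ribbon, and the forbidden NE vertex guarantees that no ribbon is placed in a position that violates the ``bottom-left on the northern boundary'' rule. For weight preservation, note that SW vertices contribute $q^s$, and summing $s$ over all SW vertices counts the total number of path intersections, which equals $\spin(T)$. EW vertices mark where a path exits the strip and SE vertices mark where a path continues into another ribbon; together they account for exactly one $(-y_{i'})$ factor per ribbon, producing $(-y_{i'})^{|\lambda/\mu|/n}$. Since NS, NW contribute $1$, the total weight of the state is $q^{\spin(T)}(-y_{i'})^{|\lambda/\mu|/n}$, matching the contribution of $T$ to $\mathcal{G}_{\lambda/\mu}^{(n)}$.

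The main obstacle will be showing rigorously that the pass-through configuration produces an SE vertex (not NE) and, dually, that no legitimate vertical strip ever forces an NE. A clean way to handle this is to observe that conjugating a vertical strip of $\lambda/\mu$ yields a horizontal strip of $\lambda'/\mu'$, and this conjugation swaps the roles of the two directions: a red-blue-red column pattern for the vertical strip corresponds, after reflection across the antidiagonal, to a blue-red-blue pattern for the conjugate horizontal strip. One could in principle deduce the result from Proposition \ref{prop:hribbon} through this conjugation, but the direct combinatorial argument above is the cleaner route because the lattice model boundary is not symmetric under this flip and would have to be separately reconciled.
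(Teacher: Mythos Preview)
Your proposal is correct and follows essentially the same approach as the paper: the same bijection, the same vertex-type casework (with SE arising from the red--blue--red pattern and NE excluded), and the same weight accounting via SW for the $q$-power and EW/SE for the $(-y_{i'})$ factors. One small imprecision: your description of SE as ``where a path continues into another ribbon'' is not quite how the paper phrases it---the paper notes that in the vertical case paths do \emph{not} bounce through, and instead characterizes SE and EW together as the vertices where a travelling particle exits the top boundary---but your count of one such vertex per ribbon is correct regardless, so the argument goes through.
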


\begin{proof}
Consider the same bijection as in Proposition \ref{prop:hribbon}, sending a vertical ribbon strip of shape $\lambda/\mu$ to the one row model with boundary conditions $B_{\lambda/\mu}$ and assigning left arrows according to Lemma \ref{lem:ribbonbij} for each ribbon, then filling the remaining edges with right arrows. Label colorings of the edge sequence that produce NW, SW, NS, and EW vertices remain the same, but we also see SE vertices from labels that start red (particle), become blue (no particle), and return to red. Note that there will be no NE vertices, since these come from allowing the top-right-most square of a ribbon to touch the upper boundary of another ribbon rather than the bottom boundary of the ribbon strip, which is not allowed in a vertical ribbon strip, so the state will have nonzero weight. This also ensures that the reverse map will produce a vertical ribbon strip.

\begin{figure*}[h]
	\centering
	\begin{tikzpicture}[scale=0.8]
	    \draw [color=orange,thick] (2,6.5)--(6,6.5);
		\draw [rounded corners=12,color=red,thick] (3,3.5)--(0.5,3.5)--(0.5,2.5)--(0,2.5);
		\draw [color=green,thick] (1,2.5) cos (0.5,3) sin (0,3.5);
		\draw [color=green,thick] (2,1.5) cos (1.5,2) sin (1,2.5);
		\draw [color=cyan,thick] (1,0.5) cos (0.5,1) sin (0,1.5);
		\draw [color=magenta,thick] (3,4.5) cos (2.5,5) sin (2,5.5);
		\draw [color=yellow,rounded corners=12,thick] (4,5.5)--(2.5,5.5)--(2.5,4.5)--(1,4.5);
		\draw [color=violet,rounded corners=12,thick] (2,2.5)--(1.5,2.5)--(1.5,1.5)--(0.5,1.5)--(0.5,0.5)--(0,0.5);

		\draw (0,0)--(1,0)--(1,1)--(2,1)--(2,3)--(3,3)--(3,5)--(4,5)--(4,6)--(6,6)--(6,7)--(2,7)--(2,5)--(1,5)--(1,4)--(0,4)--cycle;
		\draw (0,2)--(1,2)--(1,3)--(2,3);
		\draw (1,4)--(3,4);
		\draw (2,6)--(4,6);
		\node () at (-.25,0.5) {1};
		\node () at (-.25,1.5) {2};
		\node () at (-.25,2.5) {3};
		\node () at (-0.25,3.5) {4};
		\node () at (0.25,4.25) {5};
		\node () at (.75,4.75) {6};
		\node () at (1.25,5.25) {7};
		\node () at (1.75,5.75) {8};
		\node () at (1.75,6.5) {9};
		\node () at (2.5,7.25) {10};
		\node () at (3.5,7.25) {11};
		\node () at (4.5,7.25) {12};
		\node () at (5.5,7.25) {13};
		
		\node () at (0.5,-0.25) {1};
		\node () at (1.25,0.25) {2};
		\node () at (1.75,0.75) {3};
		\node () at (2.25,1.5) {4};
		\node () at (2.25,2.25) {5};
		\node () at (2.75,2.75) {6};
		\node () at (3.25,3.5) {7};
		\node () at (3.25,4.25) {8};
		\node () at (3.75,4.75) {9};
		\node () at (4.25,5.25) {10};
		\node () at (4.75,5.75) {11};
		\node () at (5.5,5.75) {12};
		\node () at (6.25,6.5) {13};

	\end{tikzpicture}
	\caption{The edge sequence path of a vertical strip.}
	\label{edgevertical}
\end{figure*}

\begin{figure*}[h]

\centering
\begin{tikzpicture}[scale=0.85]
    \foreach \x in {0,1,...,12}{
\ribbonvertex{\x}{0}{\x}
}

\arrowdowncolor{0}{-1}{blue}
\arrowupcolor{1}{-1}{cyan}
\arrowdowncolor{2}{-1}{blue}
\arrowupcolor{3}{-1}{green}
\arrowupcolor{4}{-1}{violet}
\arrowdowncolor{5}{-1}{blue}
\arrowupcolor{6}{-1}{red}
\arrowupcolor{7}{-1}{magenta}
\arrowdowncolor{8}{-1}{blue}
\arrowupcolor{9}{-1}{yellow}
\arrowdowncolor{10}{-1}{blue}
\arrowdowncolor{11}{-1}{blue}
\arrowupcolor{12}{-1}{orange}

\arrowupcolor{0}{1}{violet}
\arrowupcolor{1}{1}{cyan}
\arrowupcolor{2}{1}{red}
\arrowupcolor{3}{1}{green}
\arrowdowncolor{4}{1}{blue}
\arrowupcolor{5}{1}{yellow}
\arrowdowncolor{6}{1}{blue}
\arrowupcolor{7}{1}{magenta}
\arrowupcolor{8}{1}{orange}
\arrowdowncolor{9}{1}{blue}
\arrowdowncolor{10}{1}{blue}
\arrowdowncolor{11}{1}{blue}
\arrowdowncolor{12}{1}{blue}

\arrowrightcolor{0-0.5}{0+3/5}{blue}
\arrowrightcolor{0-0.5}{0+1/5}{blue}
\arrowrightcolor{0-0.5}{0-1/5}{blue}
\arrowrightcolor{0-0.5-0.03}{0-3/5+0.04}{blue}

\arrowleftcolor{1-0.5+0.02}{0+3/5-0.04}{violet}
\arrowrightcolor{1-0.5}{0+1/5}{blue}
\arrowrightcolor{1-0.5}{0-1/5}{blue}
\arrowrightcolor{1-0.5-0.02}{0-3/5+0.04}{blue}

\arrowrightcolor{2-0.5+0.02}{0+3/5-0.04}{blue}
\arrowleftcolor{2-0.5}{0+1/5}{violet}
\arrowrightcolor{2-0.5}{0-1/5}{blue}
\arrowrightcolor{2-0.5-0.02}{0-3/5+0.04}{blue}

\arrowleftcolor{3-0.5}{0+3/5}{red}
\arrowrightcolor{3-0.5}{0+1/5}{blue}
\arrowleftcolor{3-0.5}{0-1/5}{violet}
\arrowrightcolor{3-0.5-0.02}{0-3/5+0.04}{blue}

\arrowrightcolor{4-0.5}{0+3/5}{blue}
\arrowleftcolor{4-0.5}{0+1/5}{red}
\arrowrightcolor{4-0.5}{0-1/5}{blue}
\arrowleftcolor{4-0.5}{0-3/5}{violet}

\arrowrightcolor{5-0.5+0.02}{0+3/5-0.04}{blue}
\arrowrightcolor{5-0.5}{0+1/5}{blue}
\arrowleftcolor{5-0.5}{0-1/5}{red}
\arrowrightcolor{5-0.5}{0-3/5}{blue}

\arrowleftcolor{6-0.5}{0+3/5}{yellow}
\arrowrightcolor{6-0.5}{0+1/5}{blue}
\arrowrightcolor{6-0.5}{0-1/5}{blue}
\arrowleftcolor{6-0.5-0.02}{0-3/5+0.04}{red}

\arrowrightcolor{7-0.5}{0+3/5}{blue}
\arrowleftcolor{7-0.5}{0+1/5}{yellow}
\arrowrightcolor{7-0.5}{0-1/5}{blue}
\arrowrightcolor{7-0.5-0.02}{0-3/5+0.04}{blue}

\arrowrightcolor{8-0.5+0.02}{0+3/5-0.04}{blue}
\arrowrightcolor{8-0.5}{0+1/5}{blue}
\arrowleftcolor{8-0.5}{0-1/5}{yellow}
\arrowrightcolor{8-0.5}{0-3/5}{blue}

\arrowleftcolor{9-0.5}{0+3/5}{orange}
\arrowrightcolor{9-0.5}{0+1/5}{blue}
\arrowrightcolor{9-0.5}{0-1/5}{blue}
\arrowleftcolor{9-0.5-0.02}{0-3/5+0.04}{yellow}

\arrowrightcolor{10-0.5}{0+3/5}{blue}
\arrowleftcolor{10-0.5}{0+1/5}{orange}
\arrowrightcolor{10-0.5}{0-1/5}{blue}
\arrowrightcolor{10-0.5-0.02}{0-3/5+0.04}{blue}

\arrowrightcolor{11-0.5}{0+3/5}{blue}
\arrowrightcolor{11-0.5}{0+1/5}{blue}
\arrowleftcolor{11-0.5}{0-1/5}{orange}
\arrowrightcolor{11-0.5-0.02}{0-3/5+0.04}{blue}

\arrowrightcolor{12-0.5}{0+3/5}{blue}
\arrowrightcolor{12-0.5}{0+1/5}{blue}
\arrowrightcolor{12-0.5}{0-1/5}{blue}
\arrowleftcolor{12-0.5}{0-3/5}{orange}

\arrowrightcolor{13-0.5}{0+3/5}{blue}
\arrowrightcolor{13-0.5}{0+1/5}{blue}
\arrowrightcolor{13-0.5}{0-1/5}{blue}
\arrowrightcolor{13-0.5-0.03}{0-3/5+0.04}{blue}

\node () at (0,-1.8) {$-y$};
\node () at (1,-1.8) {$q$};
\node () at (2,-1.8) {$-y$};
\node () at (3,-1.8) {$q^2$};
\node () at (4,-1.8) {$1$};
\node () at (5,-1.8) {$-y$};
\node () at (6,-1.8) {$1$};
\node () at (7,-1.8) {$q$};
\node () at (8,-1.8) {$-y$};
\node () at (9,-1.8) {$1$};
\node () at (10,-1.8) {$1$};
\node () at (11,-1.8) {$1$};
\node () at (12,-1.8) {$1$};

\node () at (0,1.5) {$1$};
\node () at (1,1.5) {$2$};
\node () at (2,1.5) {$3$};
\node () at (3,1.5) {$4$};
\node () at (4,1.5) {$5$};
\node () at (5,1.5) {$6$};
\node () at (6,1.5) {$7$};
\node () at (7,1.5) {$8$};
\node () at (8,1.5) {$9$};
\node () at (9,1.5) {$10$};
\node () at (10,1.5) {$11$};
\node () at (11,1.5) {$12$};
\node () at (12,1.5) {$13$};
\end{tikzpicture}
\caption{The single row lattice state corresponding to the vertical strip in Figure \ref{edgevertical}, with colors indicating movements of particles. One can verify that the partition function is $q^4(-y)^4$.}
\label{onerowlatticevert}
\end{figure*}

Again, the preservation of weight follows naturally from the path interpretation of the ribbon lattices. The weight of the lattice model obtains a power of $(-y_{i'})$ from each time a travelling particle exits out the top boundary. These exits occur on vertices of type SE and EW, and each of them will come from a single $n$-ribbon in the vertical strip.  To match the power of $q$, notice that unlike the horizontal strip lattice, we can't ``bounce'' an exiting travelling particle of one ribbon out through another ribbon, so all the intersections contributing to the spin in the vertical ribbon lattice happen on type SW vertices, which are precisely the labels mid-ribbon at which a given ribbon takes a step up, and therefore contribute a single power of $q$ to the weight of that ribbon.
\end{proof}

\begin{proof}[Proof of Theorem \ref{mainthm}]
Consider a super ribbon tableau of shape $\lambda/\mu$ under the total order $\prec$ on alphabets $A,A'$. Stripping off ribbons label by label generates a series of skew shapes according to the total order: we define a sequence of partitions  $\lambda_{j}$ such that the ribbon strip labelled $i_j \in A\cup A'$ will have shape $\lambda_{i_j}/\lambda_{i_{j-1}}$. Using this sequence, assign vertical edge labels to the state with boundary conditions $B_{\lambda/\mu}(X/Y)$. Thus, the columns will read $\mu = \lambda_0, \lambda_1, ...., \lambda_{r+s} = \lambda$, so that the $i_j$-th row is labelled $\lambda_{i_j}$ on the bottom and $\lambda_{i_{j-1}}$ on the top. Marshalling Propositions \ref{prop:hribbon} and \ref{prop:vribbon} together, this process defines a weight-preserving bijection and thus summing over all ribbon tableaux/states, the partition function of this system equals the desired super LLT polynomial.
\end{proof}

\section{Solvability of the Ribbon Lattice Model}
\label{sec:solvability}
Lattice models satisfying a Yang-Baxter equation are called \textit{exactly solvable}, or \textit{integrable}. From the lattice model perspective, the Yang Baxter equation gives a consistent way of effectively permuting the weights of an admissible state without altering the partition function. For certain sets of weights, it is an useful tool for showing symmetry or recursion relations on the partition functions. 


	
The Yang-Baxter Equation for a $n$-ribbon lattice model requires choosing Boltzmann weights for a set of new \emph{diagonal} vertices of in-degree $2n$ and out-degree $2n$, called $R^{(n)}$-vertices, such that the partition functions of the two lattice models in (\ref{YBE}) are equal for any set of fixed boundary conditions. For a complete treatment of Yang-Baxter equations of 2-$d$ square lattice models, we refer the reader to \cite[Section 1 \& 5]{BBF09} and \cite[Chapter 8 \& 9]{yang-baxter}.
\begin{equation}\label{YBE}
	Z\left(\ \tikz[baseline=.1ex,scale=0.8]{
		\spiderR{-0.2}{0}{1}
		\ribbonvertex{1.5}{1}{1}
		\ribbonvertex{1.5}{-1}{1}
		\node () at (2.5,1) {$v_i$};
		\node () at (2.5,-1) {$v_j$};
		\node () at (-0.2,1.5) {$R^{(n)}_{i,j}$}
	}\ \right)\ =\ 
Z \left(\ \tikz[baseline=.1ex,scale=0.8]{
		\spiderR{0.2}{0}{1}
		\ribbonvertex{-1.5}{1}{1}
		\ribbonvertex{-1.5}{-1}{1}
		\node () at (-2.5,1) {$v_j$};
		\node () at (-2.5,-1) {$v_i$};
		\node () at (0.2,1.5) {$R^{(n)}_{i,j}$}
	}\ \right)
\end{equation}

We shall view these $R^{(n)}$-vertices as a combination of $n$ individual diagonal $R^{(1)}$-vertices, which allows us to define the weight of a $R^{(n)}$-vertex as a product of weights of $n$ $R^{(1)}$-vertices. This process is commonly known as \emph{fusion}, as it results from the fusion procedure defined for tensor products of quantum group modules and their subquotients by Kulish, Reshetikhin, and Sklyanin \cite{KRSfusion}. In the case of this model, the associated quantum group is $U_q (\widehat{\frak{sl}}(1|r) )$, where $r$ is maximum number of parts in $\lambda$. 

\begin{defn}\label{def:R-vertex}
Let $r$ be a $R^{(n)}$-vertex depicted as $n$ stacked $R^{(1)}$ vertices. That is, depicting $r$ as
\begin{center}
\begin{tikzpicture}[baseline = -2]
\node (i) at (1,1) {$\boldsymbol{I}$};
\node (j) at (1,-1) {$\boldsymbol{J}$};
\node (k) at (-1,-1) {$\boldsymbol{K}$};
\node (l) at (-1,1) {$\boldsymbol{L}$};
\draw[thick] (i)--(k);
\draw[thick] (j)--(l);
\node at (2,0){$=$};
\end{tikzpicture}
\begin{tikzpicture}[baseline=0]
\spiderR{0}{0}{1}
\node at (1.7,1.6) {$I_1$};
\node at (1.7,1.2) {$I_2$};
\node at (1.7,0.8) {$\cdots$};
\node at (1.7,0.4) {$I_n$};

\node at (1.7,-1.6) {$J_n$};
\node at (1.7,-1.2) {$\cdots$};
\node at (1.7,-0.8) {$J_2$};
\node at (1.7,-0.4) {$J_1$};

\node at (-1.7,1.6) {$L_1$};
\node at (-1.7,1.2) {$L_2$};
\node at (-1.7,0.8) {$\cdots$};
\node at (-1.7,0.4) {$L_n$};

\node at (-1.7,-1.6) {$K_n$};
\node at (-1.7,-1.2) {$\cdots$};
\node at (-1.7,-0.8) {$K_2$};
\node at (-1.7,-0.4) {$K_1$};

\end{tikzpicture}

\end{center}
where $I_k,J_k,K_k,L_k\in \{>,<\}$ for $k\in[n]$, we set
\[r_k=
\tikz[baseline=.1ex,scale=0.7]{
\node (i) at (1,1) {$I_k$};
\node (j) at (1,-1) {$J_k$};
\node (k) at (-1,-1) {$K_k$};
\node (l) at (-1,1) {$L_k$};
\draw[thick] (i)--(k);
\draw[thick] (j)--(l);
}.
\]
and define $\wt(r)=\prod_{k=1}^n\wt(r_k)$
where $\wt(r_k)$ is chosen from Figure \ref{fig:verticalRwts} and depends on the types of rows being crossed. 
\end{defn}


Owing to the choice of $q$-powers on the weights in Figures \ref{fig:RibbonWts} and \ref{fig:columnRibbonWts} necessary to hit the LLT polynomials precisely on the nose, this Yang Baxter equation solution is not precisely a fusion model; that is to say, the weights of certain types of $R^{(1)}$-vertices contributing to the $R^{(n)}$ depend on the other types of vertices appearing. In each of the cases of swapping rows of the same type (horizontal - horizontal or vertical - vertical) this dependence affects only the power of $q$ on only one type of vertex. To define this normalization power for the $k$-th piece $r_k$, $k\in\{1,...,n\}$, of a $R^{(n)}$-vertex swapping horizontal strip rows, let
\begin{align*} \theta&=2\cdot{\#\{r_t = \text{S}|t>k \} },\\
\sigma&=\#\{r_t=\text{SS}|t>k\}+\#\{r_t=\text{NN}|t<k\}+\#\{r_t=\text{W}\}.
\intertext{Graphically, the number $\theta$ is $2$ times the number of other S-vertices below; the number $\sigma$ is the sum of the number of SS-vertices below, the number of NN-vertices above and the number of all W-vertices. Similarly, for $r_k$ in a $R^{(n)}$-vertex swapping vertical strip rows, we will need:}
\theta'&=2\cdot\#\{r_t = \text{N}|t>k \},\\
\sigma'&=\#\{r_t=\text{SS}|t<k\}+\#\{r_t=\text{NN}|t>k\}+\#\{r_t=\text{W}\}.
\intertext{Lastly, for $r_k$ in vertices that swap rows of different type, we need the quantities}
\theta''&=2\cdot\#\{r_t = \text{E}|t>k \},\\
\sigma'' &= \#\{r_t=\text{SS}|t<k\}+\#\{r_t=\text{NN}|t>k\}+\#\{r_t=\text{S}\}.
\end{align*}

\begin{figure}[h]
\begin{center}
 \begin{tabular}{| M{1.8cm} |M{2.4cm} | M{1.8cm} | M{1.8cm} | M{2.4cm} | M{1.8cm} | M{1.8cm} |} \hline
 label& N & SS  & W &E & NN &S \\
\hline 
$r_k$ &\north &  \southsouth & \west & \east & \northnorth & \south\\
 \hline
$\wt_{HH}(r_k)$ & $0$ & $x_j$ & $x_j$ & $x_i$ & $x_i$ & $\frac{q^{\theta}x_i-x_j}{q^{\theta+\sigma}} $ \Top\Bottom \\
\hline
$\wt_{VV}(r_k)$ & ${ q^{\sigma'}(y_j - q^{\theta'}y_i)}$ & $y_j$ & $y_i$ & $y_j$ & $y_i$ & $0$  \Top\Bottom\\
\hline
$\wt_{HV}(r_k)$ & $q^{\sigma''}(-y_j)$ & $q^{\sigma''}(-y_j)$ & $0$ & $\frac{q^{2(n-1)}x_i - q^{\theta''}y_j}{q^{\theta''}}$ & $q^{\sigma''}x_i$ & $q^{\sigma''}x_i$ \Top\Bottom  \\
\hline
 \end{tabular}\end{center}
 
\caption{The weights for the $k$-th $R^{(1)}$-vertex in an $R^{(n)}$-vertex swapping rows $i,j$, where $HH$ denotes swapping horizontal strip rows, $VV$ swapping vertical strip rows, and $HV$ swapping a horizontal strip row $i$ with a vertical strip row $j$.}\label{fig:verticalRwts}
\end{figure}

\begin{theorem}\label{thm:horizontalYBE}
Together with the horizontal strip row weights in Figure \ref{fig:RibbonWts}, the $HH$ Boltzmann weights for $R^{(n)}$-vertices given in Definition \ref{def:R-vertex} give a solution to the Yang-Baxter equation for any $n\geq 1$.
\end{theorem}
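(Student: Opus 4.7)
The plan is to verify the Yang--Baxter equation by fixing arbitrary boundary data on the six external edges of the hexagonal diagram in (\ref{YBE}) and showing the resulting partition functions agree. Because the two vertical edges carry single $\{\wedge,\vee\}$-labels and each of the four horizontal edges carries an $n$-tuple of $\{<,>\}$-labels, particle conservation (left-arrows in equals left-arrows out at every admissible vertex) sharply restricts the internal configurations that contribute on either side, and in particular forces the ``particle count'' entering from the right-hand $n$-tuples to equal that exiting from the left. I would organize the case check by this particle count.

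The main structural tool is the fusion description of the $R^{(n)}$-vertex as a stack of $n$ individual $R^{(1)}$-vertices (Definition \ref{def:R-vertex}), combined with the strand rendering of the $n$-ribbon vertex from Figure \ref{fig:nvertex}. Both sides of (\ref{YBE}) can then be drawn as a single hexagonal diagram with $2n+1$ strands crossing. After fixing boundary data, summing over admissible internal states factors strand-by-strand, and the full YBE reduces to a collection of ``single-strand'' six-vertex-style identities that use the $\wt_{HH}$ weights of Figure \ref{fig:verticalRwts}. These single-strand identities are small enough to verify by a direct enumeration of the at most six $R^{(1)}$-types (N, SS, W, E, NN, S) consistent with each choice of strand boundary; the spectral-parameter content of the $S$-weight $(q^{\theta}x_i-x_j)/q^{\theta+\sigma}$ is precisely what is needed to match the $x_i,x_j$ discrepancy arising when two particles scatter in opposite directions.

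The principal obstacle is the non-locality of the $q$-exponents $\theta$ and $\sigma$: the weight of the $k$-th $R^{(1)}$-piece depends on the types of the other pieces in the same $R^{(n)}$-vertex, so the fusion is not weight-multiplicative in the naive sense and the single-strand identities do not close up individually. The key combinatorial lemma needed is that the aggregate exponent $\sum_k(\theta_k+\sigma_k)$ over all strands of an $R^{(n)}$-vertex depends only on the \emph{multiset} of $R^{(1)}$-types appearing, not on their vertical arrangement; consequently the extra $q$-power is the same on both sides of (\ref{YBE}) because both sides realize the same multiset (rows $i$ and $j$ merely switch roles). I would prove this by a direct count: S-type pieces contribute two to $\theta$ of every piece below them, while SS, NN, and W pieces each contribute one to $\sigma$ of pieces on prescribed sides, and these contributions reorganize symmetrically.

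With the $q$-normalization reconciled, the remaining content is a ``naive'' fused YBE, and I would finish by listing the few admissible scattering patterns for each fixed particle count and verifying term-by-term that the two sums in (\ref{YBE}) coincide. As a sanity check and base case, one can inspect $n=1$ (recovering the six-vertex YBE of the Curran--Yost-Wolff--Zhang--Zhang model \cite{LLTREU}), and then confirm that the inductive step from $n-1$ to $n$ amounts exactly to appending one more strand together with the combinatorial bookkeeping of $\theta,\sigma$ described above. The hard part, as noted, is the bookkeeping; the case analysis itself is long but mechanical.
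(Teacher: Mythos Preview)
Your proposal has a genuine structural gap. The claim that ``both sides realize the same multiset'' of $R^{(1)}$-types is incorrect, and this undermines the key lemma you propose. Look carefully at the strand picture in (\ref{eq:startriangle}): because of the twisted-edge shift, the $R^{(n)}$-vertex on the left has pieces $\calL_1,\calL_2,\dots,\calL_n$ while the one on the right has pieces $\calR_1,\dots,\calR_n$ with $\calL_{t+1}=\calR_t$ for $t=1,\dots,n-1$. Thus $n-1$ pieces (the ``block'') are shared, but $\calL_1$ and $\calR_n$ are genuinely different $R^{(1)}$-vertices determined by different boundary data; the two multisets agree only by accident in some cases. Moreover, even restricting to the block, your lemma that $\sum_k(\theta_k+\sigma_k)$ depends only on the multiset is false: $\sigma_k$ counts $SS$-pieces strictly below and $NN$-pieces strictly above position $k$, so the total over all $S$-pieces counts ordered $(S,SS)$ and $(NN,S)$ pairs, which changes under reordering.

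The paper's proof exploits exactly the block structure you missed. It fixes the boundary data $(a,b_n,c_n,\alpha,\beta_1,\gamma_1)$ not determined by the block, reducing to fourteen ``underlying'' one-dimensional YBE cases in the single unshared piece on each side, and then tracks how the block's left-arrows feed $q$-powers into the rectangular vertices $v_i,v_j$ versus $w_i,w_j$ (via the $q^s$ in Figure~\ref{fig:RibbonWts}) and into the $\sigma,\theta$ exponents of $\calL_1$ and $\calR_n$. In the hardest cases (two states on one side, one on the other) an honest cancellation between two different $x_i,x_j$ terms is required, using identities like $s_b=\#W+\#NN$ and $s_c=\#W+\#S+\#SS$ that relate block types to left-arrow counts; this cancellation cannot be captured by any multiset-only bookkeeping. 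Your plan to factor strand-by-strand and handle $q$-powers globally would need to be replaced by this block-plus-underlying-piece decomposition to go through.
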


\begin{proof}
	The key to solving the Yang-Baxter equation for higher ribbon lattices is using the property that arrows do not change along the straight edges of rectangular vertices. This implies that most of the interior edges are fixed and only the twisted edges are variables. Therefore given any set of boundary conditions, the star-triangle Yang-Baxter identity can be illustrated as follows,
where we label the interior edges that are not fixed by the boundary condition in red.

	\begin{equation}\label{eq:startriangle}
	\sum_{\phi,\xi,\psi}
	\ \tikz[baseline=.1ex,scale=1.2]{
		\spiderR{0.2}{0}{1}
		\ribbonvertex{-1.5}{1}{1}
		\ribbonvertex{-1.5}{-1}{1}
		
		\node at (-1.5,-2.2) {$\alpha$};
	
		\node at (-2.3,1.6) {$b_1$};
		\node at (-2.3,1.2) {$b_2$};
		\node at (-2.3,0.8) {$\cdots$};
		\node at (-2.3,0.4) {$b_n$};
		
		\node at (-2.3,-1.6) {$c_n$};
		\node at (-2.3,-1.2) {$\cdots$};
		\node at (-2.3,-0.8) {$c_2$};
		\node at (-2.3,-0.4) {$c_1$};
		
		\node at (-1.5,2.2) {$a$};
		
		\node at (1.65,-1.6) {$\beta_n$};
		\node at (1.65,-1.2) {$\cdots$};
		\node at (1.65,-0.8) {$\beta_2$};
		\node at (1.65,-0.4) {$\beta_1$};
		
		\node at (1.65,1.6) {$\gamma_1$};
		\node at (1.65,1.2) {$\gamma_2$};
		\node at (1.65,0.8) {$\cdots$};
		\node at (1.65,0.4) {$\gamma_n$};
		
		\node [fill=white, inner sep= .7]at (-0.9,1.6) {$\color{red}\phi$};
		\node [fill=white, inner sep= .7]at (-0.9,1.2) {$b_1$};
		\node [fill=white, inner sep= .7]at (-0.9,0.8) {$b_2$};
		\node [fill=white, inner sep= .7]at (-0.9,0.4) {$\cdots$};
		
		\node [fill=white, inner sep= .7]at (-0.9,-1.6) {$\cdots$};
		\node [fill=white, inner sep= .7]at (-0.9,-1.2) {$c_2$};
		\node [fill=white, inner sep= .7]at (-0.9,-0.8) {$c_1$};
		\node [fill=white, inner sep= .7]at (-0.9,-0.4) {$\color{red}\psi$};
		
		\node [fill=white, inner sep= 1.6]at (-1.5,0) {\color{red}$\xi$};

	}\ 
	=
	\sum_{\theta,\sigma,\delta}\ \tikz[baseline=.1ex,scale=1.2]{
		\spiderR{-0.2}{0}{1}
		\ribbonvertex{1.5}{1}{1}
		\ribbonvertex{1.5}{-1}{1}
		\node at (1.5,-2.2) {$\alpha$};
		
		\node at (2.3,-1.6) {$\beta_n$};
		\node at (2.3,-1.2) {$\cdots$};
		\node at (2.3,-0.8) {$\beta_2$};
		\node at (2.3,-0.4) {$\beta_1$};
		
		\node at (2.3,1.6) {$\gamma_1$};
		\node at (2.3,1.2) {$\gamma_2$};
		\node at (2.3,0.8) {$\cdots$};
		\node at (2.3,0.4) {$\gamma_n$};
		
		\node [fill=white, inner sep= .7]at (0.9,1.6) {$\gamma_2$};
		\node [fill=white, inner sep= .7]at (0.9,1.2) {$\cdots$};
		\node [fill=white, inner sep= .7]at (0.9,0.8) {$\gamma_n$};
		\node [fill=white, inner sep= .7]at (0.9,0.4) {$\color{red}\theta$};
		
		\node [fill=white, inner sep= .7]at (0.9,-1.6) {$\color{red}\sigma$};
		\node [fill=white, inner sep= .7]at (0.9,-1.2) {$\beta_n$};
		\node [fill=white, inner sep= .7]at (0.9,-0.8) {$\cdots$};
		\node [fill=white, inner sep= .7]at (0.9,-0.4) {$\beta_2$};
		
		\node [fill=white, inner sep= 1.6]at (1.5,0) {\color{red}$\delta$};
		
		\node at (1.5,2.2) {$a$};
		
		\node at (-1.65,-1.6) {$c_n$};
		\node at (-1.65,-1.2) {$\cdots$};
		\node at (-1.65,-0.8) {$c_2$};
		\node at (-1.65,-0.4) {$c_1$};
		
		\node at (-1.65,1.6) {$b_1$};
		\node at (-1.65,1.2) {$b_2$};
		\node at (-1.65,0.8) {$\cdots$};
		\node at (-1.65,0.4) {$b_n$};
		
	}\ 
\end{equation}
Notice that the last $n-1$ pieces of the $R^{(n)}$-vertex on the left hand side are identical to the first $n-1$ pieces of the $R^{(n)}$-vertex on the right hand side. We may thus treat these weights of these vertices as a block, and to prove the YBE we must examine how this block interacts with the rectangular vertices and the remaining $R^{(1)}$ vertex on each side. We do this by enumerating all possible combinations of boundary conditions $(a,b_n,c_n,\alpha,\beta_1,\gamma_1)$ not specified by the block: we will call strands connected to these edges the ``underlying YBE."

If $q=1$, this block has equal weight on both sides of the YBE and does not interact with the rest of the vertices at all on either side, so \Cref{eq:startriangle} reduces to the 1-dimensional underlying YBE. When $q=1$, the horizontal strip row model recovers an 180 degree rotation of the Schur model $S^\Delta$ of \cite{BBF09} with $t=0$, and our diagonal weights specialize to their YBE solution, accounting for this rotation. 
Therefore the underlying YBE holds, and the only difficulty is to check that the weights involving powers of $q$ are matched up. 

Since each vertex involved has an equal number of arrows going in and coming out, these boundary conditions must observe the same property in order to be admissible: we then have $\binom{6}{3}$ choices dictated by choosing the ``in" arrows. However, the configurations with $\a = $ in, $\b_1 =$ in will have weight 0, as will those with $b_n =$ out, $a = $ out, since SE vertices have weight 0. Two of these cases overlap, so we are left with 14 sets of boundary conditions to check. Notice that in the other four weight 0 cases, one side of the underlying YBE does not have a SE vertex, but does have a $R^{(1)}$ vertex of type N, which is precisely why type $N$ must have weight 0. The set of these nonzero boundary conditions is given in Appendix \ref{sec:rybe}. We will show that, in each case, we may reduce the more complicated Yang-Baxter equation to that of the  underlying YBE. Since many of our vertices have similar weight, we may group into cases.

\paragraph{Case 1: \Cref{ybe1,ybe2,ybe3,ybe4,ybe5,ybe6,ybe7,ybe8}} First we consider \Cref{ybe1,ybe2,ybe3,ybe4,ybe5,ybe6,ybe7,ybe8}, which each have one state on each side of the YBE, but no NW vertices on either side. For example, \Cref{ybe1} is given below. 
\[\ybeleft101101100\ =\ \yberight101101001 \tag{A.1}\]
Denote the $R^{(n)}$-vertex on the left hand side $\calL$ and the one on the right hand side $\calR$. Denote the ribbon vertices on the left hand side $v_j,v_i$ and those on the right hand side $w_i,w_j$ as illustrated above, where $i,j$ are row indices. Note that $\calL_1$ and $\calR_n$ are the ``underlying" $R^{(1)}$ vertices. The rest of the $R^{(1)}$ vertices are part of the ``block," so $\calL_{t+1} = \calR_{t}$ for $t = 1,....,n-1$.
The equality we would like to prove is
\[\wt(v_i)\wt(v_j)\prod_{t=1}^{n}\wt(\calL_t)=\wt(w_i)\wt(w_j)\prod_{t=1}^{n}\wt(\calR_t)\tag{$*$}\label{eq:star}\]
In \Cref{ybe1,ybe2,ybe3,ybe4,ybe5,ybe6,ybe7,ybe8}, none of $v_i,v_j,w_i,w_j$ is of type NW, meaning that every left arrow in $\{b_1,\cdots,b_{n-1},c_1,\cdots,c_{n-1}\}$ will contribute to a power of $q$ in the weight of $v_j$ or $v_i$. For the same reason every left arrow in $\{\gamma_2,\cdots,\gamma_n,\beta_2,\cdots,\beta_n\}$ will contribute to a power of $q$ in the weight of $w_i$ or $w_j$.  By \Cref{def:R-vertex}, any piece of $\calL$ and $\calR$ has the same number of in and out arrows. So in an admissible configuration, the number of left arrows in $\{b_1,\cdots,b_{n-1},c_1,\cdots,c_{n-1}\}$ must be the same as the number of left arrows in $\{\gamma_2,\cdots,\gamma_n,\beta_2,\cdots,\beta_n\}$, so we have equal factors of $q$ on both sides coming from the rectangular vertices. Furthermore, since neither $\calL_1$ nor $\calR_n$ are of type S, we know that the underlying YBE does not depend on the vertices in the block. Therefore we only need to prove the following:
\[\prod_{t=2}^{n}\wt(\calL_t)=\prod_{t=1}^{n-1}\wt(\calR_t)\]
Since $\calL_{t+1}=\calR_{t}$ for all $t=1,...,n-1$, the above equality is true when there is no S-vertex in the block to be affected by $\calL_1$ or $\calR_n$. If there is, in \Cref{ybe1,ybe2,ybe3,ybe4}, $\calL_1$ contributes to a power of $q^{-1}$ for any S-vertex below, and $\calR_n$ contributes to a power of $q^{-1}$ for any S-vertex above. In \Cref{ybe5,ybe6,ybe7,ybe8}, $\calL_1$ and $\calR_1$ do not affect the weight of other pieces of $\calL$ and $\calR$.
Therefore, if we have an $S$ vertex in the block, i.e. if $\calL_{k+1}$ and $\calR_k$ are S-vertices for some $1\leq k\leq n-1$, their weight must be the same. This proves \Cref{ybe1,ybe2,ybe3,ybe4,ybe5,ybe6,ybe7,ybe8}.

\paragraph{Case 2: \Cref{ybe11}} Next we look at \Cref{ybe11}, which has one state on each side, each with one NW vertex. \[\ybeleft001001001\ =\ \yberight001001100\tag{A.9}\] 

We examine the different types of vertices that might appear in the block and how each affects the rectangular vertices. If $\calL_{t+1},\calR_{t}$ are SS-vertices for some $1\leq t\leq n-1$, then $\calL_t$ contributes to a power of $q$ in $\wt(v_i)$ as well as a power of $q^{-1}$ to $\wt(\calL_1)$, while $\calR_{t-1}$ has no effect on the weight of other vertices on the right hand side. 

Similarly, a type NN vertex has no effect on the left hand side, but contributes a power of $q$ to $\wt(w_i)$ and a power of $q^{-1}$ to $\wt(\calR_n)$ on the right hand side. A type W vertex affects both sides: it contributes to a power of $q$ in $\wt(v_i)$ and a $q^{-1}$ in $\wt(\calL_1)$ on the left hand side, as well as a matching power of $q$ in $\wt(w_i)$ and $q^{-1}$ in $\wt(\calR_n)$ on the right. A type E vertex has no effect on either side.

It remains to check type S: if $\calL_t,\calR_{t-1}$ are S-vertices for some $1\leq t\leq n-1$, then 
$\calL_t$ contributes to a $q$ in $\wt(v_i)$ while $\calR_{t-1}$ contributes to a $q$ in $\wt(w_j)$. Moreover, the $\sigma$-coefficients must be the same because they have the same pattern of NN, SS or W-vertices around them. We are only left to check the $\theta$-coefficients of the S-vertices. Supposing there are $m$ S-vertices in $\calL$ and $\calR$ (including $\calL_1$ and $\calR_n$), then the weight after ignoring the $\sigma$-coefficients will be
\[\prod_{t=1}^{m}(x_i-q^{-2(t-1)}x_j)\]
on both sides. Note that this type does technically affect the underlying YBE, in that the bottom-most S-vertex on each side plays the role of the $R^{(1)}$ S-vertex from the $n=1$ case, but the factor of $x_i$  coming from vertices $v_i,w_i$ respectively is unaffected by this change.
It follows that \Cref{ybe11} is true.

\paragraph{Case 3: \Cref{ybe14}} This is the last case with only one state per side, each with two NW vertices. 
\[\ybeleft000000000\ =\ \yberight000000000\tag{A.10}\]
This is the easiest case to check, because the underlying YBE and the block $R$-vertices have no interaction with each other, and none of the rectangular weights admit additional powers of $q$. Since the block's weight is then the same on both sides and the underlying YBE holds, we are done.

\paragraph{Case 4: \Cref{ybe9,ybe12}} The last four sets of boundary conditions each have one side with two states and one side with one. They split into two cases, depending on whether the additional state is on the left side or the right. First, we tackle the ones on the left. Consider \Cref{ybe9}.
\[\ybeleftb100001100+\ybeleft100001011\ =\ \yberight100001000	 \tag{A.11}\]
As in Case 2, most types of vertices that could appear in the block will only affect the partition function by contributing powers of $q$. For any $1\leq t\leq n-1$, if $\calK_{t+1}, \calL_{t+1}, \calR_t$ is an E-vertex, it doesn't contribute additional factors to either side. If it is a $W$-vertex, it contributes a factor of $q$ to $\wt(u_j)$ in the first term on the left hand side and a factor of $q$ to  $\wt(v_j)$, a factor of $q$ to $\wt(v_i)$, and a factor of $q^{-1}$ to $\wt(\calL_1)$ in the second term. On the right hand side, it contributes a power of $q$ to $\wt(w_i)$. A type NN vertex will contribute a power of $q$ to each term, specifically to vertices $\wt(u_j), \wt(v_j),$ and $\wt(w_i)$. Lastly, a type SS vertex contributes nothing to the right hand side or the first term on the left, but a power of $q$ to $\wt(v_i)$ and a power of $q^{-1}$ to $\wt(\calL_1)$ in the second term on the left.

Unlike in the rest of our considerations, having S-vertices in the block in this case affects the underlying YBE more than just superficially. Unless $q= 1$, we must consider the original factors involving $x_i,x_j$ as well as new factors created by the interaction of the block with the rectangular vertices, $\calK_1, \calL_1,$ and $\calR_n$. Let $s_\g$ be the number of left arrows in $\g_2,...,\g_n$, $s_b$ be the number of left arrows in $b_1,...,b_{n-1}$, and $s_c$ be the number of left arrows in $c_1,...,c_{n-1}$. Let $\#(-)$ return the number of vertices of a given type in the block.

We consider the right hand side first: ignoring weights that are internal to the block and thus constant across all states, we have a factor of $q^{s_\g}x_i$ from $w_i$ and a factor of $x_i$ from $\calR_n$.

On the left hand side, the first term garners a factor of $q^{s_b}x_i$ from $u_j$ and a factor of $q^{-\#(S)}x_j$ from $\calK_1$, since it is type NN and therefore contributes a power of $q^{-1}$ to every type S vertex below it. In the second term, all three non-block vertices contribute: $\wt(v_j) = q^{s_b}$, $\wt(v_i) = q^{s_c}x_i$, and  $\wt(\calL_1) = \frac{q^{2\#(S)}x_i - x_j}{q^{2\#(S) + \#(W) + \#(SS)}}$.

So, the new factor in the weight on the left hand side is
\begin{align*}
    x_i\left( q^{s_b - \#(S)}x_j + q^{s_b+s_c - \#(W) - \#(NN)}x_i - q^{s_b+s_c - 2\#(S) - \#(W) - \#(NN)}x_j\right).
\end{align*}
Noting that $s_b = \#(W) + \#(NN)$ and $s_c = \#(W) + \#(S) + \#(SS)$, we see that the $x_j$ terms within the parentheses cancel and the power of $q$ on the $x_i$ term becomes $\#(W) + \#(NN) + \#(S)$, which is precisely $s_\g$, so our YBE is satisfied.

Since rectangular vertex types SW and NS have the same weight, as do types EW and NE, \Cref{ybe12} can be proved using the same argument.

\paragraph{Case 5: \Cref{ybe10,ybe13}}
The last two sets of boundary conditions work very similarly to Case 4, except that the additional state falls on the right hand side. In \Cref{ybe10}, denote the vertices as follows:
\[\ybeleft001100000\ =\ \yberightb001100001+\yberight001100110\tag{A.13}\]

We again consider first the vertex types that affect the partition function only by a power of $q$. Type E has no effect on either side. Type W adds a power of $q$ to $\wt(v_i)$ on the left hand side, a power of $q$ to $\wt(y_j)$ in the first term on the right hand side, and powers $q, q,$ and $q^{-1}$ respectively to $\wt(w_i),\wt(w_j),$ and $\wt(\calR_n)$ in the second term on the right hand side. Types NN and SS switch places from Case 4: here, type SS adds a power of $q$ to every term, specifically to $\wt(v_i), \wt(y_j),$ and $\wt(w_j)$, whereas type NN has no effect on the left hand side or the first vertex on the right, but contributes a power of $q$ to $\wt(w_i)$ and $q^{-1}$ to $\wt(\calR_n)$ in the second term on the right.

As in the last case, having an S-vertex in the block does affect the underlying YBE: we will again consider all the weights that are not wholly internal to the block. Let $s_c, s_\g, \#(-)$ be as in Case 4 and let $s_\b$ similarly be the number of left arrows in $\b_2,...,\b_n$. 

We begin with the left hand side: we see contributing factors from $\wt(v_i) = q^{s_c}$ and $\wt(\calL_1) = x_i$.

On the right hand side, the first term is fairly straightforward: vertex $y_j$ contributes a power of $q^{s_\b}$ and $\calQ_n$ a power of $x_j$. As well, $\calQ_n$ adds a power of $q^{-1}$ to any S-vertex in the block, so we obtain an additional factor of $q^{-\#(S)}$.

The second term on the right is more complicated: since $\calR_n$ is type S, it affects the weight of any S-vertex in the block. This seems to run contrary to our desire to group the block vertices into a set of weights constant on every term. However, we may reuse the same trick from Case 2:  a S-vertex in the block that had $k$ S-vertices below it in any other term has $k+1$ below it in this second term, so its weight changes from $q^{-\sigma}\frac{q^{2k}x_i - x_j}{q^{2k}}$ to $q^{-\sigma}\frac{q^{2(k+1)}x_i - x_j}{q^{2(k+1)}}$. Note that these both have the same $\sigma$, which depends only on vertices in the block. Ignoring this factor, we see that the weight of all the S-vertices in $\calR$, including $\calR_n$ is $\prod_{k=0}^{\#(S)} (x_i - q^{-2k}x_j)$. In contrast, the corresponding part of the weight of S-vertices in any other term is  $\prod_{k=0}^{\#(S)-1} (x_i - q^{-2k}x_j)$, so our new factor is $x_i - q^{-2\#(S)}x_j$. We may think of this as cycling the $\theta$-parts of the weights of S-vertices in $\calR$: $\calR_n$ takes the place of the S-vertex above it, and so on, until our new factor comes from the top S-vertex in the block. We also obtain a factor of $q^{-\#(W)- \#(NN)}$ from the $\sigma$-part of $\wt(\calR_n)$, a factor of $q^{s_\g}$ from vertex $w_i$ and a factor of $q^{s_\b}$ from $w_j$.

Thus, the new factor in the weight on the right hand side is
\[q^{s_\b - \#(S)}x_j + q^{s_\b + s_\g -\#(W) - \#(NN)}(x_i - q^{-2\#(S)}x_j). \]
Noting that $s_\b = \#(W) + \#(SS)$ and $s_\g = \#(W) + \#(S) + \#(NN)$, we see that the $x_j$ terms in this factor cancel and power of $q$ on the remaining $x_i$ term is $\#(W) + \#(S) + \#(SS)$, which matches $s_c$. Thus the YBE holds.

As in Case 4, our two sets of boundary conditions \Cref{ybe10} and \Cref{ybe13} work very similarly. To swap to \Cref{ybe13}, we notice that rectangular vertex types NS and NE differ by a power of $x_i$ on the left hand side and $\calL_1$ swaps from type E (weight $x_i$) to type SS (weight $x_j)$, so we modify the weight from \Cref{ybe10} by an additional factor of $x_j$ on the left hand side. On the right hand side, we see only this same factor appear in the change from vertex type NS to NE and SW to EW in the two terms, respectively, so our YBE holds by the same arguments as above.

\end{proof}

\begin{theorem}\label{thm:verticalYBE}

Together with the weights in Figure \ref{fig:columnRibbonWts}, the $VV$ Boltzmann weights for $R^{(n)}$-vertices given in Figure \ref{fig:verticalRwts} give a solution to the Yang-Baxter equation for any $n\geq 1$.
\end{theorem}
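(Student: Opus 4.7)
The plan is to mirror the argument of Theorem \ref{thm:horizontalYBE} with the roles of N-vertices and S-vertices exchanged, corresponding to the fact that the vertical strip row weights in Figure \ref{fig:columnRibbonWts} send NE to $0$ (rather than SE) and the $VV$ diagonal weights in Figure \ref{fig:verticalRwts} send the S-vertex to $0$ (rather than N). I would set up the star-triangle form of the YBE exactly as in \eqref{eq:startriangle}, again exploiting the fact that straight edges of the ribbon vertices do not change label, so that the only interior edges that vary are the twisted edges and the red-labeled diagonal edges.

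For the underlying one-dimensional YBE (the case $n=1$), I would observe that at $q=1$ the vertical strip row weights together with the $VV$ $R^{(1)}$-weights match the same Schur-type five-vertex model as in \cite{BBF09}, now reflected across the other diagonal so that N plays the role S did above. Admissibility forces equal numbers of in- and out-arrows at the degree-$6$ external boundary, giving $\binom{6}{3}=20$ a priori configurations, and the killing conditions (a weight-$0$ NE rectangular vertex or a weight-$0$ S diagonal vertex forced on one side only) eliminate the same six configurations as in the horizontal case (up to vertical reflection), leaving 14 nonzero boundary cases. These I would group in exactly the five cases of the horizontal proof.

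For Cases 1--3, corresponding to the analogues of \Cref{ybe1}--\Cref{ybe14}, the underlying YBE does not interact with the block of shared $R^{(1)}$-vertices, so it suffices to track powers of $q$. I would verify that any W-vertex in the block contributes matching factors of $q$ to the appropriate $v_i, w_i$ and to $\mathcal{L}_1, \mathcal{R}_n$ on both sides; that NN and SS-vertices swap roles relative to the horizontal case (here SS is ``inert'' and NN is the one whose $\sigma'$-coefficient gets picked up); and that N-vertices in the block, being the non-trivially-weighted diagonal type, must have their $\theta'$-coefficients cycled correctly as in Case 2 of Theorem \ref{thm:horizontalYBE}.

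The main obstacle is Cases 4--5, where one side has two states and the underlying YBE is genuinely perturbed by the presence of the $\mathcal{L}_1$ or $\mathcal{R}_n$ N-vertex. Here I would carry out the same accounting as in Theorem \ref{thm:horizontalYBE}, letting $s_\beta, s_\gamma, s_b, s_c$ count left arrows in the relevant edge segments and $\#(\text{type})$ count each vertex type in the block. The identity to verify takes the schematic form
\[
y_i\bigl(q^{s_b - \#(N)}y_j + q^{s_b+s_c-\#(W)-\#(SS)}y_i - q^{s_b+s_c-2\#(N)-\#(W)-\#(SS)}y_j\bigr) = (\text{RHS factor}),
\]
and the cancellation of the $y_j$ terms together with $s_b = \#(W)+\#(SS)$ and $s_c = \#(W)+\#(N)+\#(NN)$ should produce the matching $q$-power on $y_i$. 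The bookkeeping hazard is keeping the $\sigma'$ and $\theta'$ normalizations oriented consistently (they reverse the ``above/below'' conventions from $\sigma,\theta$), and tracking the overall signs produced by the $-y_{i'}$ weights on SE and EW vertices; once these are aligned, each case reduces as before.
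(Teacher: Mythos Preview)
Your approach is essentially the paper's: mirror the proof of Theorem \ref{thm:horizontalYBE} with the $N\leftrightarrow S$ swap in the diagonal weights and the $NE\leftrightarrow SE$ swap in the rectangular weights, reducing to the same underlying one-dimensional YBE and a block of shared $R^{(1)}$-vertices. The paper's own argument is only a sketch, and your plan captures its structure.

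Two points of execution where the paper's sketch differs from yours, worth flagging. First, the case organization is governed not by the absence of NW (as in the horizontal proof) but by the \emph{presence} of SW: in the vertical rectangular weights of Figure \ref{fig:columnRibbonWts}, SW is the \emph{only} vertex carrying $q^s$, so the $q$-accounting is inverted relative to what a straight translation of the horizontal argument suggests. Your schematic identity for Cases 4--5 appears to assume the horizontal-style accounting (left arrows in $b_i,c_i$ always contributing $q$), which will not hold here; the relations among $s_b,s_c,\#(W),\#(NN),\#(SS),\#(N)$ need to be redone from the vertical weights. Second, the 14 nonzero boundary conditions are not the reflections of the horizontal 14: six of the horizontal cases (A2, A3, A4, A9, A12, A14) become weight-zero and are replaced by six new ones, and two horizontal single-state cases (A6, A8) each acquire an additional state on one side, landing them in Cases 4--5 rather than Case 1. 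Your framework absorbs this once you enumerate carefully, but the direct ``same six killed, same grouping'' claim is not quite right.
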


\begin{proof}
The proof is nearly identical to that of Theorem \ref{thm:horizontalYBE}, so we sketch the broad strokes and leave the details to the reader. Since the weights in Figure \ref{fig:columnRibbonWts} set type NE vertices to have weight 0 instead of type SE as before, the set of boundary conditions to check is slightly different. The six weight 0 conditions in the proof of Theorem \ref{thm:horizontalYBE} will be nonzero cases, which the reader may find in Appendix \ref{sec:rybeC} as B1,...,B4, B5, and B7. Cases A2, A3, A4, A9, A12, and A14 replace them as the weight 0 cases, and cases A6 and A8 both acquire an additional state on one side, which we reproduce as cases B6 and B8. From there, splitting into cases, we divide by number of states per side of the YBE and how many SW vertices appear, as SW is now the ``unusual" vertex in terms of power of $q$:
\begin{itemize}
    \item Case 1 (one state per side, no SW vertices): conditions A1, A7, A10, A11, A13, B1, B2, B3.
    \item Case 2 (one state per side, one SW vertex per side): condition B4.
    \item Case 3 (one state per side, two SW vertices per side): condition A5.
    \item Case 4 (two states on the left, one on the right): conditions B5, B6, which have the same weights.
    \item Case 5 (one state on the left, two on the right): conditions B7, B8, and B7 = $y_i\cdot$B8.
\end{itemize}
Checking the interaction of the block with the underlying YBE in each case, we verify that the YBE does indeed hold.
\end{proof}

\begin{theorem}\label{thm:mixedYBE}

Together with the weights in Figures \ref{fig:RibbonWts} and \ref{fig:columnRibbonWts}, the $HV$ Boltzmann weights for $R^{(n)}$-vertices given in Figure \ref{fig:verticalRwts} give a solution to the Yang-Baxter equation for any $n\geq 1$.
\end{theorem}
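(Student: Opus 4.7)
The plan is to follow the same strategy used for Theorems \ref{thm:horizontalYBE} and \ref{thm:verticalYBE}, treating the shared $n-1$ pieces of the two $R^{(n)}$-vertices as a ``block'' whose weights are constant across corresponding states on the two sides of the equation. The Yang-Baxter equation then reduces to checking how this block interacts with the remaining single $R^{(1)}$-vertex and the two rectangular vertices (the ``underlying YBE'') on each side, for every admissible assignment of the six boundary edges $(a, b_n, c_n, \alpha, \beta_1, \gamma_1)$ not fixed by the block. Because arrows do not change across straight edges, these six labels together with the block type determine the entire configuration up to choices of at most a handful of internal edges.

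First I would enumerate which admissible boundary conditions give nonzero weight in the HV case. The zero-weight vertex types are now different: on the rectangular side, horizontal rows kill type SE and vertical rows kill type NE, while in the diagonal $R^{(1)}$-weights $\wt_{HV}$ from Figure \ref{fig:verticalRwts} the type W vertex has weight $0$ (replacing the role played by type N in the HH case and type S in the VV case). Using the arrow-conservation constraint on every vertex and eliminating configurations that force any of the zero vertices, I expect to again reduce to roughly a dozen nonzero boundary-condition tables, analogous to those enumerated in Appendices \ref{sec:rybe} and \ref{sec:rybeC}, which I would then bundle into groups: (i) one-state/one-state cases with no ``two-term'' E-vertices on either side, (ii) one-state/one-state cases involving one or two E-vertices, and (iii) cases where one side has two admissible states against a single state on the other.

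In each group I would verify the equation in two stages. The underlying YBE at $q=1$ should reduce to a known mixed-spectral YBE (the $R^{(1)}$ weights $\wt_{HV}$ at $q=1$ are, up to sign conventions, the $t = 0$ Schur-type $R$-matrix with one parameter $x_i$ and one parameter $-y_j$), so the $q=1$ specialization is immediate. For the $q$-dependent bookkeeping, I will use the same counting trick as in the horizontal proof: track the number of left/down arrows on the exposed block edges $\{b_1,\dots,b_{n-1},c_1,\dots,c_{n-1}\}$ and $\{\gamma_2,\dots,\gamma_n,\beta_2,\dots,\beta_n\}$, observe that admissibility forces these counts to match, and then show that the $\theta''$ and $\sigma''$ corrections built into the HV weights are precisely designed so that powers of $q$ contributed by the block cancel with those contributed by the remaining rectangular and $R^{(1)}$ vertices. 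The ``cycling'' argument used in Case 5 of the proof of Theorem \ref{thm:horizontalYBE} (shifting the role of the topmost two-term vertex in the block onto $\mathcal{R}_n$ or $\mathcal{L}_1$) should translate directly, with E-vertices playing the role previously played by S-vertices.

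The main obstacle will be the asymmetry of the mixed case: unlike HH or VV, the two swapped rows contribute different rectangular weights (one set from Figure \ref{fig:RibbonWts} in $x_i$, the other from Figure \ref{fig:columnRibbonWts} in $-y_j$), so the balance between $x_i$-factors, $y_j$-factors, and powers of $q$ is no longer symmetric under reflection. In particular, the two-state cases on one side will involve mixing a pure $x_i$ term with a pure $-y_j$ term whose sum must equal a single two-term weight $\tfrac{q^{2(n-1)}x_i - q^{\theta''}y_j}{q^{\theta''}}$ from an E-vertex on the other side; checking this requires carefully identifying $\theta''$ with the number of intervening E-vertices in the block. I expect the exponent $2(n-1)$ in the $x_i$-term of the E-weight (which is what makes the HV case genuinely different from a naive fusion) to be exactly what is needed to absorb the cumulative $q$-powers coming from the block when the underlying YBE is reassembled, and verifying this identification on the worst case will be the substantive computation.
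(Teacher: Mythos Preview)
Your proposal is correct and follows essentially the same approach as the paper: reduce to the underlying $R^{(1)}$ YBE plus block interactions, enumerate the admissible boundary conditions using the new zero-weight vertices (rectangular SE for the horizontal row, rectangular NE for the vertical row, and $R^{(1)}$ type~W for $\wt_{HV}$), and then check case by case with the E-vertex playing the role of the two-term vertex (formerly S in HH and N in VV). The paper's own proof is likewise a sketch that simply lists the resulting case groupings (its Cases~0--5, indexed by the entries of Appendices~\ref{sec:rybe} and~\ref{sec:rybeC}).

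One point the paper stresses that you pass over: the HV $R^{(n)}$-vertex can only be attached when the horizontal row sits on the $i$-strand and the vertical row on the $j$-strand, so the two rectangular vertices on each side of the YBE are of \emph{fixed, opposite} types (on the left, top is vertical and bottom is horizontal; on the right, top is horizontal and bottom is vertical). This asymmetry is exactly what determines which boundary conditions are killed and how the $q^{2(n-1)}$ exponent enters; you anticipate this in your final paragraph, but it is worth building it in from the start of the enumeration rather than treating it as an obstacle to be resolved later.
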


\begin{proof}
As in the vertical case, we abbreviate the proof, since it is very similar to that of Theorem \ref{thm:horizontalYBE}. However, it is important to note the order of the row types: we can only attach a type HV vertex if the horizontal and vertical strip rows are in the correct order, i.e. horizontal to connect to the ``$i$"-strand of the $R^{(n)}$-vertex and vertical to connect to the ``$j$"-strand of the vertex. With this in mind, our boundary conditions split into the following behaviors, as indexed in Appendices \ref{sec:rybe} and \ref{sec:rybeC}:
\begin{itemize}
\item Case 0 (weight 0): A2, A3, A4, B1, B2, B3.
\item Case 1 (one state per side, no unusual vertices): A1, A7, A9, A12, A14, B4, B5, B7.
\item Case 2 (one state per side, a horizontal NW): A10.
\item Case 3 (one state per side, a vertical SW): A5.
\item Case 4 (two states on the left, one on the right): conditions A11, B6, which have the same weights.
\item Case 5 (one state on the left, two on the right): conditions A13, B8, which have the same weights.
\end{itemize}
\end{proof}

\begin{prop}[\cite{lam-ribbon-heisenberg}, Prop 30]
The function $\mathcal{G}_{\lambda/\mu}(X/Y;q)$ is symmetric in each of $X$ and $Y$ and does not depend on the total order fixed between $A$ and $A'$.
\end{prop}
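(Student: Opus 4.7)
The plan is to use the three Yang--Baxter equations established in Theorems \ref{thm:horizontalYBE}, \ref{thm:verticalYBE}, and \ref{thm:mixedYBE}, together with the identification $\mathcal{G}_{\lambda/\mu}(X/Y;q) = Z(\mathcal{B}_{\lambda/\mu}(X/Y))$ from Theorem \ref{mainthm}, to carry out a standard \emph{train argument} that permutes adjacent rows of the lattice without altering the partition function. Symmetry in $X$ will follow from swapping two adjacent horizontal-strip rows (Theorem \ref{thm:horizontalYBE}), symmetry in $Y$ from swapping two adjacent vertical-strip rows (Theorem \ref{thm:verticalYBE}), and independence of the total order from swapping a horizontal-strip row with an adjacent vertical-strip row (Theorem \ref{thm:mixedYBE}); since adjacent transpositions generate the relevant symmetric groups and any permutation of the combined alphabet, this will suffice.

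For concreteness, consider the $X$-symmetry. Between two adjacent horizontal rows carrying parameters $x_i$ and $x_{i+1}$, attach an $R^{(n)}$-vertex of type HH on the left boundary, extending the outer edges compatibly with the original all-$>$ left boundary of $\mathcal{B}_{\lambda/\mu}(X/Y)$. A direct check from Figure \ref{fig:verticalRwts} shows that this boundary condition forces the attached $R^{(n)}$-vertex into a unique admissible configuration, with some monomial weight $w_L$ in $x_i, x_{i+1}, q$. Iteratively invoking Theorem \ref{thm:horizontalYBE} slides this $R^{(n)}$-vertex one column at a time from the left boundary to the right boundary; each application preserves the total weight, and collectively the two rows of the underlying lattice end up interchanged. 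Once the $R^{(n)}$-vertex is on the right, the all-$>$ right boundary again forces a unique admissible configuration with some weight $w_R$. This yields
\[ w_L \cdot Z(\mathcal{B}_{\lambda/\mu}(s_i \cdot X / Y)) = w_R \cdot Z(\mathcal{B}_{\lambda/\mu}(X/Y)), \]
where $s_i$ denotes the transposition $x_i \leftrightarrow x_{i+1}$. Verifying $w_L = w_R$ directly from Figure \ref{fig:verticalRwts} then gives the symmetry in $X$. The $Y$-symmetry and the order-independence arguments proceed in exactly the same way, using Theorem \ref{thm:verticalYBE} (with VV weights) and Theorem \ref{thm:mixedYBE} (with HV weights), and taking the relevant adjacent transposition within $A'$ or across the boundary between $A$ and $A'$, respectively.

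The principal obstacle is the cap-weight comparison $w_L = w_R$. The computation is elementary but must be carried out separately in each of the three settings, and is complicated by the $q$-normalization powers $\theta, \sigma, \theta', \sigma', \theta'', \sigma''$ appearing in Figure \ref{fig:verticalRwts}: because an $R^{(n)}$-vertex is a product of $n$ individual $R^{(1)}$-pieces whose weights can depend on the types of the other pieces, one must verify that both the unique left-cap configuration and the unique right-cap configuration yield the same monomial after accounting for all such interactions. The HV case is the most delicate: the two rows being swapped carry different types, so one must confirm that after the train argument the resulting lattice model--with its rows in the opposite order of type--still has partition function equal to $\mathcal{G}_{\lambda/\mu}(X/Y;q)$ computed with respect to the alternate total order on $A \cup A'$, rather than some twisted or rescaled variant.
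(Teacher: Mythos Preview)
Your proposal is correct and follows the same train argument as the paper's proof. The only notable difference is that the paper makes the cap computation completely explicit: with the all-$>$ boundary on both the left and right sides of $\mathcal{B}_{\lambda/\mu}(X/Y)$, each of the $n$ pieces of the attached $R^{(n)}$-vertex is forced to be of type~E, so the two caps are literally the same $R^{(n)}$-vertex and $w_L=w_R$ holds trivially without any case analysis. In particular, the normalization powers $\theta,\sigma,\theta',\sigma',\theta'',\sigma''$ that you flag as the ``principal obstacle'' do not interact nontrivially here, since only type~E pieces occur; the paper dispatches all three settings (HH, VV, HV) in a single sentence once this is observed.
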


\begin{proof}
For a variable set $X = (x_1,...,x_m)$, $Y = (y_{1'},...,y_{r'})$, consider the total ordering $1 < 2 < \cdots < m < 1' < 2' \cdots < r'.$ To prove each of these symmetries, we will use train arguments with the Yang-Baxter equation for different row types proven in Theorems \ref{thm:horizontalYBE}, \ref{thm:verticalYBE}, and \ref{thm:mixedYBE}. For symmetry in $X$, consider the partition function of the following lattice model, where the $R^{(n)}$-vertex is attached to horizontal strip rows $i, i+1$:

\begin{equation*}
	\tikz[baseline=.1ex,scale=0.8]{
		\spiderR{7.2}{0}{1}
		\ribbonvertex{1.5}{1}{1}
		\ribbonvertex{1.5}{-1}{1}
		\ribbonvertex{2.5}{1}{1}
		\ribbonvertex{2.5}{-1}{1}
		\ribbonvertex{3.5}{1}{1}
		\ribbonvertex{3.5}{-1}{1}
		\ribbonvertex{5.5}{1}{1}
		\ribbonvertex{5.5}{-1}{1}
		\node () at (4.5,1) {$\cdots$};
		\node () at (4.5,-1) {$\cdots$};
		\node () at (0.5,1) {$v_i$};
		\node () at (0.25,-1) {$v_{i+1}$};
		\node () at (3.5,3) {$\mu + \rho$};
		\node () at (3.5,-3) {$\lambda + \rho$};
		\node () at (3.5,2.5) {$\vdots$};
		\node () at (3.5,-2.25) {$\vdots$};
		
        \arrowrightcolor{1}{1.6}{blue}
        \arrowrightcolor{1}{1.2}{blue}
        \path[fill=white] (1,0.95) circle (.2);
        \node () at (1, 0.95) {\vdots};
        \arrowrightcolor{1}{0.425}{blue}
        
        \arrowrightcolor{1}{-0.4}{blue}
        \arrowrightcolor{1}{-0.8}{blue}
        \path[fill=white] (1,-1.05) circle (.2);
        \node () at (1, -1.05) {\vdots};
        \arrowrightcolor{1}{-1.55}{blue}
        
        \arrowrightcolor{8.45}{1.6}{blue}
        \arrowrightcolor{8.45}{1.2}{blue}
        \path[fill=white] (8.45,0.95) circle (.2);
        \node () at (8.45, 0.95) {\vdots};
        \arrowrightcolor{8.45}{0.4}{blue}
        
        \arrowrightcolor{8.45}{-0.4}{blue}
        \arrowrightcolor{8.45}{-0.8}{blue}
	\path[fill=white] (8.45,-1.05) circle (.2);
        \node () at (8.45, -1.05) {\vdots};
        \arrowrightcolor{8.45}{-1.6}{blue}}
\end{equation*}

Since all of the edges on the right boundary point right, the only choice for each $R^{(1)}$-vertex is type E, which gives this system a total weight of $x_i^n \cdot \mathcal{G}_{\lambda/\mu}^{(n)}(X/Y;q)$. As in Theorem \ref{thm:horizontalYBE}, push the diagonal vertex all the way to the right, column by column, until it emerges into the right boundary. This process gives the following system:

\begin{equation*}
	\tikz[baseline=.1ex,scale=0.8]{
		\spiderR{-0.2}{0}{1}
		\ribbonvertex{1.5}{1}{1}
		\ribbonvertex{1.5}{-1}{1}
		\ribbonvertex{2.5}{1}{1}
		\ribbonvertex{2.5}{-1}{1}
		\ribbonvertex{3.5}{1}{1}
		\ribbonvertex{3.5}{-1}{1}
		\ribbonvertex{5.5}{1}{1}
		\ribbonvertex{5.5}{-1}{1}
		\node () at (4.5,1) {$\cdots$};
		\node () at (4.5,-1) {$\cdots$};
		\node () at (6.5,-1) {$v_i$};
		\node () at (7,1) {$v_{i+1}$};
		\node () at (3.5,3) {$\mu + \rho$};
		\node () at (3.5,-3) {$\lambda + \rho$};
		\node () at (3.5,2.5) {$\vdots$};
		\node () at (3.5,-2.25) {$\vdots$};
		
        \arrowrightcolor{6}{1.55}{blue}
        \arrowrightcolor{6}{1.2}{blue}
        \path[fill=white] (6,0.95) circle (.2);
        \node () at (6, 0.95) {\vdots};
        \arrowrightcolor{6}{0.4}{blue}
        
        \arrowrightcolor{6}{-0.45}{blue}
        \arrowrightcolor{6}{-0.8}{blue}
        \path[fill=white] (6,-1.05) circle (.2);
        \node () at (6, -1.05) {\vdots};
        \arrowrightcolor{6}{-1.6}{blue}
        
        \arrowrightcolor{-1.45}{1.6}{blue}
        \arrowrightcolor{-1.45}{1.2}{blue}
        \path[fill=white] (-1.45,0.95) circle (.2);
        \node () at (-1.45, 0.95) {\vdots};
        \arrowrightcolor{-1.45}{0.4}{blue}
        
        \arrowrightcolor{-1.45}{-0.4}{blue}
        \arrowrightcolor{-1.45}{-0.8}{blue}
	\path[fill=white] (-1.45,-1.05) circle (.2);
        \node () at (-1.45, -1.05) {\vdots};
        \arrowrightcolor{-1.45}{-1.6}{blue}}
\end{equation*}

Similarly, the $R^{(n)}$-vertex in this system must be $n$ copies of the type $E$ vertex, so this side has weight $\mathcal{G}_{\lambda/\mu}^{(n)}(s_iX/Y;q) \cdot x_i^n$, where $s_iX = (x_1,...,x_{i+1},x_i,...,x_m)$. By Theorem \ref{thm:horizontalYBE}, these weights are equal, so $\mathcal{G}_{\lambda/\mu}^{(n)}(X/Y;q)$ is symmetric in the $X$ variables. The proof of symmetry in the $Y$ variables is nearly identical, excepting only that we attach the $R^{(n)}$-vertex to rows $i'$ and $(i+1)'$. To prove total order, we use a similar train argument to move horizontal strip rows gradually below vertical strip rows; since any total order respects the internal orderings on $\{1,...,m\}$ and $\{1',...,r'\}$, it is possible to achieve any total ordering desired using only the mixed-type $R^{(n)}$-vertices (i.e. type HV), and the resulting braid of all $E$ vertices will have the same weight on either side and thus cancel off as in previous cases.
\end{proof}

\section{Branching Rules}

 The structure of the ribbon lattice gives rise to combinatorial proofs of super LLT polynomial identities. For example, splitting along any row and computing the partition function of a grid in two different ways, we obtain branching rules for the super LLT polynomials.

\begin{prop} Given alphabets $A, A'$ and a total order, let $z_i$ stand in for the $x$ or $y$ variable at position $i$ in the total order. Let $r=|A| + |A'|$ and fix some $k=1,...., r$. Then the super LLT polynomials satisfy the following general branching rule:
\begin{align*}
    \mathcal{G}^{(n)}_{\lambda/\mu}(z_1,\ldots, z_r;q) &= \sum_{\gamma} \mathcal{G}^{(n)}_{\lambda/\gamma}(z_{k+1},\cdots, z_r;q)\cdot \mathcal{G}^{(n)}_{\g/\mu}(z_1,\cdots, z_k;q).
\end{align*}
where the sum runs over all partitions $\gamma$. Note that the only nonzero terms in the sum will be those $\gamma$ for which both $\lambda/\gamma$ and $\gamma/\mu$ admit super $n$-ribbon tableaux in the respective subsets of the total order.

Specifically, we may describe these $\gamma$ as partitions for which there exists a sequence of compositions $\gamma_0 = \mu+\rho, \gamma_1,..., \gamma_k = \gamma+\rho,\gamma_{k+1},...,\gamma_r=\lambda+\rho$ such that:
\begin{enumerate}
    \item $\gamma_i - \gamma_{i-1} \in nZ^{\ell(\lambda)}$ for all $i$.
    \item If row $i$ is a horizontal strip row, $(\gamma_i)_j \neq (\gamma_{i-1})_k$ for all $j\neq k$.
    \item If row $i$ is a vertical strip row, $(\gamma_i - \gamma_{i-1})_j \in\{0,n\}$ for all $j$.
\end{enumerate}
\end{prop}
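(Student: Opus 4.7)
The plan is to apply Theorem \ref{mainthm} to both sides and prove the identity at the level of partition functions. The key operation is to slice the lattice for $\mathcal{B}_{\lambda/\mu}(z_1,\ldots,z_r)$ horizontally between row $k$ and row $k+1$, then sum over all admissible labelings of the resulting cut of $\lambda_1+r$ vertical edges.

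First I would observe that because all edges on the left and right boundaries are labeled $>$, the number of $\wedge$-labeled vertical edges on any horizontal slice of the lattice is constant. Consequently, for any given state $\mathfrak{s}$ the labels on the cut between rows $k$ and $k+1$ determine a strict composition of length $\ell(\mu+\rho)$, which is $\gamma + \rho$ for a unique partition $\gamma$. Since the Boltzmann weight of a state is the product of the weights of its vertices, $\mathfrak{s}$ factors into a state of $\mathcal{B}_{\gamma/\mu}(z_1,\ldots,z_k)$ on the top $k$ rows and a state of $\mathcal{B}_{\lambda/\gamma}(z_{k+1},\ldots,z_r)$ on the bottom $r-k$ rows, with $\wt(\mathfrak{s})$ the product of the two. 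Grouping states by the cut composition $\gamma$ and summing, Theorem \ref{mainthm} applied to each factor yields the identity.

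To extract the explicit characterization of admissible $\gamma$, I would iterate this slicing argument after each of the $r$ rows, producing a sequence $\gamma_0,\ldots,\gamma_r$ of strict compositions recording arrow positions between consecutive rows, with $\gamma_0 = \mu+\rho$, $\gamma_k = \gamma+\rho$, and $\gamma_r = \lambda+\rho$. Condition (1) then follows from the analysis of Lemma \ref{lem:ribbonbij}: admissibility forces arrows to travel in strands of $n$ vertices without changing label, so each particle's column position shifts by a multiple of $n$. Conditions (2) and (3) translate the forbidden-vertex constraints of each row type. The SE vertex has weight zero in horizontal strip rows (Figure \ref{fig:RibbonWts}), while the NE vertex has weight zero in vertical strip rows (Figure \ref{fig:columnRibbonWts}). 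In a vertical strip row, the prohibition on NE vertices prevents any particle from ``bouncing'' through a ribbon, forcing each particle to either stay in place or shift by exactly $n$; in a horizontal strip row, the prohibition on SE vertices precisely rules out the coincidences $(\gamma_i)_j = (\gamma_{i-1})_k$ with $j \neq k$, since such a coincidence would correspond to one particle entering a column just vacated by another within the same row.

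The main obstacle is this last translation step: verifying rigorously that the vertex-level prohibitions on SE and NE vertices are equivalent to the combinatorial conditions (2) and (3) on the sequence $\{\gamma_i\}$. For this I would revisit the path interpretation developed in the proofs of Propositions \ref{prop:hribbon} and \ref{prop:vribbon}, where each vertex type is directly read off from the red/blue coloring of the edge-sequence path of the ribbon strip. From that perspective, the restriction on admissible transitions between consecutive compositions can be extracted by tracking precisely when a particle may share a column with a particle from the adjacent row, and matching that to the SE/NE prohibition in each row type.
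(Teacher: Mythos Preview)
Your proposal is correct and follows essentially the same approach as the paper: slice the lattice model $\mathcal{B}_{\lambda/\mu}$ between rows $k$ and $k+1$, identify the cut labels with $\gamma+\rho$, and invoke Theorem~\ref{mainthm} on each piece. Your derivation of conditions (1)--(3) from the particle description and the vanishing of SE (horizontal) and NE (vertical) vertices matches the paper's argument exactly.
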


\begin{proof}
Consider the boundary conditions $\mathcal{B}_{\lambda/\mu}(X/Y)$ and let $z_i$ denote the spectral parameter on row $i$. Slicing the diagram horizontally along the vertical edges between rows $k$ and $k+1$, consider all choices of labels for the sliced edges, which will each be of the form $\gamma+\rho$ for some partition $\gamma$. 

\begin{center}
\begin{tikzpicture}[scale = 0.8]
		\ribbonvertex{1.5}{1.25}{1}
		\ribbonvertex{1.5}{-1.25}{1}
		\ribbonvertex{2.5}{1.25}{1}
		\ribbonvertex{2.5}{-1.25}{1}
		\ribbonvertex{3.5}{1.25}{1}
		\ribbonvertex{3.5}{-1.25}{1}
		\ribbonvertex{5.5}{1.25}{1}
		\ribbonvertex{5.5}{-1.25}{1}
		\draw (3.5,3) ellipse (3cm and 0.2cm);
		\draw (3.5,0) ellipse (3cm and 0.2cm);
		\draw (3.5,-3) ellipse (3cm and 0.2cm);		
		\node () at (4.5,1.25) {$\cdots$};
		\node () at (4.5,-1.25) {$\cdots$};
		\node () at (6.5,1.25) {$k$};
		\node () at (6.75,-1.25) {$k+1$};
		\node () at (3.5,3) {$\mu + \rho$};
		\node () at (3.5,0) {$\gamma + \rho$};
		\node () at (3.5,-3) {$\lambda + \rho$};
		\node () at (3.5,-2.4) {$\vdots$};
		\node () at (3.5,2.6) {$\vdots$};
		
        \arrowrightcolor{6}{1.8}{blue}
        \arrowrightcolor{6}{1.45}{blue}
        \path[fill=white] (6,1.2) circle (.2);
        \node () at (6, 1.2) {\vdots};
        \arrowrightcolor{6}{0.65}{blue}
        
        \arrowrightcolor{6}{-0.65}{blue}
        \arrowrightcolor{6}{-1.05}{blue}
        \path[fill=white] (6,-1.3) circle (.2);
        \node () at (6, -1.3) {\vdots};
        \arrowrightcolor{6}{-1.85}{blue}
        
         \arrowrightcolor{1}{1.8}{blue}
        \arrowrightcolor{1}{1.45}{blue}
        \path[fill=white] (1,1.2) circle (.2);
        \node () at (1, 1.2) {\vdots};
        \arrowrightcolor{1}{0.65}{blue}
        
        \arrowrightcolor{1}{-0.65}{blue}
        \arrowrightcolor{1}{-1.05}{blue}
        \path[fill=white] (1,-1.3) circle (.2);
        \node () at (1, -1.3) {\vdots};
        \arrowrightcolor{1}{-1.85}{blue}
\end{tikzpicture}
\end{center}

Then rows $1,...,k$ will have boundary conditions $\mathcal{B}_{\gamma/\mu}$ and rows $k+1,...,r$ will have boundary conditions $\mathcal{B}_{\lambda/\mu}$. If either Note that the only nonzero terms in this sum will be those for $\gamma$ such that $\gamma/\mu, \lambda/\gamma$ are both tileable with $n$-ribbons such that ribbons labelled $i$ form a horizontal strip if $z_i = x_i$ and a vertical strip if $z_i = y_i$.

Using the particle description of the model, the composition $\gamma_i$ labels the columns between row $i-1$ and row $i$ containing a particle. The first condition follows from the fact that particles can only move along a row in multiples of $n$ vertices. Within this condition, horizontal rows require that the same label only appear in $\gamma_{i-1}$ and $\gamma_i$ if it comes from a particle passing straight up through the row, i.e. a type SW vertex as opposed to a type SE. On the other hand, vertical rows require that each particle must either pass straight through or travel one single step of $n$ particles, since the type NE vertex which allows a particle to travel multiple sets of $n$ particles has weight 0.
\end{proof}

\section{Implications of the Cauchy Identity for the Lattice Model}\label{sec:cauchyish}


A slight adaptation of the super LLT lattice model, obtained by reversing the direction of travel of the particles from left to right, allows us to give a combined \emph{Cauchy model}, suggestively named to relate to the Cauchy identity proved in Section \ref{sec:heisenberg}. In the manner of similar Cauchy identities for similar polynomials, we will stack the new adapted model together with the original model.

We first present the adapted model, whose partition function also gives the super LLT polynomials.

\begin{figure}[H]
\centering 
 \begin{tabular}{| M{1.1cm} |M{1.47cm} | M{1.47cm} | M{1.47cm} | M{1.47cm} | M{1.47cm} | M{1.47cm} |} 
 \hline
Label & SW & NS & SE & NW & EW & NE \\
 \hline 
Vertex&\qquad
\begin{tikzpicture}[scale=0.8]
	\goodlookingvertex{-4.5}{1.5}{3}
	\arrowup{-4.5}{-2.5+3}
		\arrowright{-4.5+0.67}{1.5+0.64}
		\arrowup{-4.5}{-0.5+3}
		\arrowright{-4.5-0.67}{1.5-0.64}
		
		\draw [blue](-4.5+0.67,1.5-0.16) ellipse (0.22cm and 0.64cm);
\end{tikzpicture}     &

\qquad \begin{tikzpicture}[scale=0.8]
\goodlookingvertex{-1.5}{1.5}{1}
	\arrowdown{-1.5}{2.5}
		\arrowup{-1.5}{0.5}
		\arrowright{-1.5+0.67}{1.5+0.64}
		\arrowleft{-1.5-0.67}{1.5-0.64}
		\draw [blue](-1.5+0.67,1.5-0.16) ellipse (0.22cm and 0.64cm);
\end{tikzpicture}  & 

\qquad \begin{tikzpicture}[scale=0.8]
\goodlookingvertex{-1.5}{1.5}{1}
	\arrowup{-1.5}{2.5}
		\arrowup{-1.5}{0.5}
		\arrowleft{-1.5+0.67}{1.5+0.64}
		\arrowleft{-1.5-0.67}{1.5-0.64}
\end{tikzpicture} & 
\qquad \begin{tikzpicture}[scale=0.8]
\goodlookingvertex{-1.5}{1.5}{1}
	\arrowdown{-1.5}{2.5}
		\arrowdown{-1.5}{0.5}
		\arrowright{-1.5+0.67}{1.5+0.64}
		\arrowright{-1.5-0.67}{1.5-0.64}
\end{tikzpicture} & 
\qquad \begin{tikzpicture}[scale=0.8]
\goodlookingvertex{1.5}{1.5}{2}
	\arrowup{1.5}{2.5}
		\arrowleft{1.5+0.67}{1.5+0.64}
		\arrowdown{1.5}{0.5}
		\arrowright{1.5-0.67}{1.5-0.64}
		\draw [blue](1.5+0.67,1.5-0.16) ellipse (0.22cm and 0.64cm);
\end{tikzpicture} & 
\qquad \begin{tikzpicture}[scale=0.8]
\goodlookingvertex{4.5}{1.5}{2}
\arrowdown{4.5}{1.5+1}
		\arrowleft{4.5+0.67}{1.5+0.64}
		\arrowdown{4.5}{1.5-1}
		\arrowleft{4.5-0.67}{1.5-0.64}
	\draw [blue](4.5+0.67,1.5-0.16) ellipse (0.22cm and 0.64cm);
\end{tikzpicture}
\\

\hline
$wt_{\widetilde{H}}$ & $0$ & $q^tx_i$ & $q^t$ & $q^tx_i$ & $q^t$ & $1$  \\
\hline
$wt_{\widetilde{V}}$ & $-y_i$ & $-y_i$ & $q^t$ & 0 & $1$ & $1$\\
\hline

 \end{tabular}
 \caption{The weights of vertices lying in row $i$ for the alternate $n$ super ribbon lattice model, where $t$ is the number of \emph{right} arrows in the blue circled area.}
 \label{fig:CauchyWts}
\end{figure} 

\begin{defn}
Given a total ordering on alphabets $A, A'$, a skew partition $\lambda/\mu$, and a positive integer $r$, let $\widetilde{\mathcal{B}}_{\lambda/\mu}(X/Y)$ be family of lattice models with weights determined in Figure \ref{fig:CauchyWts} and boundary conditions:
\begin{itemize}
\item edges on the left and right boundaries are labelled $<$,
\item edges on the top boundary are labelled $\wedge$ on parts of $\lambda + \rho$  and $\vee$ else (numbering columns left to right starting with 1 as before),
\item edges on the bottom boundary are labelled $\wedge$ on parts of $\mu + \rho$ and $\vee$ else,
    \item \emph{horizontal} strip rows labelled $i\in A$ have spectral parameters $x_i$,
    \item \emph{vertical} strip rows labelled $i' \in A'$ have spectral parameters $y_{i'}$, and
    \item reading from top to bottom, rows are labelled in increasing order according to the total order.
\end{itemize}
\end{defn}

\begin{figure}
\centering
\begin{tikzpicture}[scale=0.85]
    \foreach \x in {0,1,...,11}{
\ribbonvertex{\x}{0}{\x}
}

\arrowdowncolor{0}{1}{blue}
\arrowdowncolor{1}{1}{blue}
\arrowdowncolor{2}{1}{blue}
\arrowupcolor{3}{1}{yellow}
\arrowdowncolor{4}{1}{blue}
\arrowupcolor{5}{1}{green}
\arrowdowncolor{6}{1}{blue}
\arrowdowncolor{7}{1}{blue}
\arrowupcolor{8}{1}{cyan}
\arrowdowncolor{9}{1}{blue}
\arrowdowncolor{10}{1}{blue}
\arrowupcolor{11}{1}{BurntOrange}

\arrowupcolor{0}{-1}{cyan}
\arrowupcolor{1}{-1}{green}
\arrowdowncolor{2}{-1}{blue}
\arrowupcolor{3}{-1}{yellow}
\arrowdowncolor{4}{-1}{blue}
\arrowdowncolor{5}{-1}{blue}
\arrowdowncolor{6}{-1}{blue}
\arrowupcolor{7}{-1}{BurntOrange}
\arrowdowncolor{8}{-1}{blue}
\arrowdowncolor{9}{-1}{blue}
\arrowdowncolor{10}{-1}{blue}
\arrowdowncolor{11}{-1}{blue}

\arrowleftcolor{0-0.5}{0+3/5}{BrickRed}
\arrowleftcolor{0-0.5}{0+1/5}{BrickRed}
\arrowleftcolor{0-0.5}{0-1/5}{BrickRed}
\arrowleftcolor{0-0.5-0.03}{0-3/5+0.04}{BrickRed}

\arrowrightcolor{1-0.5+0.02}{0+3/5-0.04}{cyan}
\arrowleftcolor{1-0.5}{0+1/5}{BrickRed}
\arrowleftcolor{1-0.5}{0-1/5}{BrickRed}
\arrowleftcolor{1-0.5-0.02}{0-3/5+0.04}{BrickRed}

\arrowrightcolor{2-0.5+0.02}{0+3/5-0.04}{green}
\arrowrightcolor{2-0.5}{0+1/5}{cyan}
\arrowleftcolor{2-0.5}{0-1/5}{BrickRed}
\arrowleftcolor{2-0.5-0.02}{0-3/5+0.04}{BrickRed}

\arrowleftcolor{3-0.5}{0+3/5}{BrickRed}
\arrowrightcolor{3-0.5}{0+1/5}{green}
\arrowrightcolor{3-0.5}{0-1/5}{cyan}
\arrowleftcolor{3-0.5-0.02}{0-3/5+0.04}{BrickRed}

\arrowleftcolor{4-0.5}{0+3/5}{BrickRed}
\arrowleftcolor{4-0.5}{0+1/5}{BrickRed}
\arrowrightcolor{4-0.5}{0-1/5}{green}
\arrowrightcolor{4-0.5}{0-3/5}{cyan}

\arrowrightcolor{5-0.5+0.02}{0+3/5-0.04}{cyan}
\arrowleftcolor{5-0.5}{0+1/5}{BrickRed}
\arrowleftcolor{5-0.5}{0-1/5}{BrickRed}
\arrowrightcolor{5-0.5}{0-3/5}{green}

\arrowleftcolor{6-0.5}{0+3/5}{BrickRed}
\arrowrightcolor{6-0.5}{0+1/5}{cyan}
\arrowleftcolor{6-0.5}{0-1/5}{BrickRed}
\arrowleftcolor{6-0.5-0.02}{0-3/5+0.04}{BrickRed}

\arrowleftcolor{7-0.5}{0+3/5}{BrickRed}
\arrowleftcolor{7-0.5}{0+1/5}{BrickRed}
\arrowrightcolor{7-0.5}{0-1/5}{cyan}
\arrowleftcolor{7-0.5-0.02}{0-3/5+0.04}{BrickRed}

\arrowrightcolor{8-0.5+0.02}{0+3/5-0.04}{BurntOrange}
\arrowleftcolor{8-0.5}{0+1/5}{BrickRed}
\arrowleftcolor{8-0.5}{0-1/5}{BrickRed}
\arrowrightcolor{8-0.5}{0-3/5}{cyan}

\arrowleftcolor{9-0.5}{0+3/5}{BrickRed}
\arrowrightcolor{9-0.5}{0+1/5}{BurntOrange}
\arrowleftcolor{9-0.5}{0-1/5}{BrickRed}
\arrowleftcolor{9-0.5-0.02}{0-3/5+0.04}{BrickRed}

\arrowleftcolor{10-0.5}{0+3/5}{BrickRed}
\arrowleftcolor{10-0.5}{0+1/5}{BrickRed}
\arrowrightcolor{10-0.5}{0-1/5}{BurntOrange}
\arrowleftcolor{10-0.5-0.02}{0-3/5+0.04}{BrickRed}

\arrowleftcolor{11-0.5}{0+3/5}{BrickRed}
\arrowleftcolor{11-0.5}{0+1/5}{BrickRed}
\arrowleftcolor{11-0.5}{0-1/5}{BrickRed}
\arrowrightcolor{11-0.5}{0-3/5}{BurntOrange}

\arrowleftcolor{12-0.5}{0+3/5}{BrickRed}
\arrowleftcolor{12-0.5}{0+1/5}{BrickRed}
\arrowleftcolor{12-0.5}{0-1/5}{BrickRed}
\arrowleftcolor{12-0.5-0.03}{0-3/5+0.04}{BrickRed}

\node () at (0,-1.8) {$x$};
\node () at (1,-1.8) {$qx$};
\node () at (2,-1.8) {$1$};
\node () at (3,-1.8) {$q^2$};
\node () at (4,-1.8) {$qx$};
\node () at (5,-1.8) {$q$};
\node () at (6,-1.8) {$1$};
\node () at (7,-1.8) {$qx$};
\node () at (8,-1.8) {$q$};
\node () at (9,-1.8) {$1$};
\node () at (10,-1.8) {$1$};
\node () at (11,-1.8) {$1$};

\node () at (0,1.5) {$1$};
\node () at (1,1.5) {$2$};
\node () at (2,1.5) {$3$};
\node () at (3,1.5) {$4$};
\node () at (4,1.5) {$5$};
\node () at (5,1.5) {$6$};
\node () at (6,1.5) {$7$};
\node () at (7,1.5) {$8$};
\node () at (8,1.5) {$9$};
\node () at (9,1.5) {$10$};
\node () at (10,1.5) {$11$};
\node () at (11,1.5) {$12$};
\end{tikzpicture}
\caption{The single horizontal strip row lattice state for $\mu = (4,1), \lambda = (8,6,4,3)$, and $n=4$ in the alternate model. Note that the weight is $q^7x^4$, the same as that of the state in Figure \ref{onerowlattice}.}
\label{onerowlatticeAlt}
\end{figure}

\begin{theorem}\label{altmainthm}
Given a skew partition $\lambda/\mu$ and a total order on alphabets $A,A'$, we have
 \[Z\left(\widetilde{\mathcal{B}}_{\l/\mu}(X/Y)\right)=\G_{\l/\mu}^{(n)}(X/Y;q).\] 
\end{theorem}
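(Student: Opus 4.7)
The plan is to mirror the proof of Theorem \ref{mainthm} by constructing a weight-preserving bijection between super $n$-ribbon tableaux of shape $\lambda/\mu$ and admissible states of $\widetilde{\mathcal{B}}_{\lambda/\mu}(X/Y)$. The key structural difference between the two models is that the boundary conditions are flipped: $\lambda+\rho$ now sits on top and $\mu+\rho$ on the bottom, and the side arrows point inward from the right rather than from the left. Heuristically, this corresponds to reversing the direction of particle travel, so that particles now flow upward and to the right rather than upward and to the left.

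I would first prove the analog of Lemma \ref{lem:ribbonbij} for a single ribbon. The argument is parallel: assigning an edge-sequence coloring to the ribbon, the boundary labels together with the straight-edge-constancy rule again force a unique admissible one-row state, and its weight matches $q^{\spin(r)}z_i$ with $z_i \in \{x_i, -y_{i'}\}$. The role played by the SE vertex (weight $0$) in Figure \ref{fig:RibbonWts} is taken over by the SW vertex in the horizontal row weights, and analogously NE and NW swap roles in the vertical-row weights; this is precisely the algebraic manifestation of the reversed particle direction.

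Extending to horizontal strips and vertical strips should follow exactly as in Propositions \ref{prop:hribbon} and \ref{prop:vribbon}: one peels off ribbons label by label (now reading in the reversed direction), assigns right arrows to the twisted-edge strands dictated by each ribbon, and fills in the remaining edges with left arrows coming from the left/right boundary conditions. The forbidden vertex types (SW in a horizontal row, NW in a vertical row) are precisely those that would allow a ribbon configuration violating the horizontal- or vertical-strip condition, just as in the original model. Stitching together the strip bijections row by row in the fixed total order on $A \cup A'$ then yields the theorem, exactly as in the proof of Theorem \ref{mainthm}.

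The main obstacle will be verifying that the power-of-$q$ convention in Figure \ref{fig:CauchyWts}, which now counts right arrows rather than left arrows in the blue region, still recovers the spin of each individual ribbon under the reversed bijection. One must track which vertex types carry powers of $q$ (now NS, NW, SE, EW in horizontal rows, and SW, NS, SE in vertical rows) and show that along each ribbon's edge-sequence path the accumulated $q$-power equals $\spin(r)$; this reduces to checking that each internal path intersection contributes exactly one factor of $q$ under the reversed particle convention, just as in the original. A cleaner alternative would be to construct an explicit involution on admissible states carrying $\mathcal{B}_{\lambda/\mu}(X/Y)$ to $\widetilde{\mathcal{B}}_{\lambda/\mu}(X/Y)$ — for instance, a horizontal reflection combined with a corresponding vertex-type relabeling — thereby transferring the statement of Theorem \ref{mainthm} directly; however, because the reflection swaps the positions of $\lambda$ and $\mu$ nontrivially, this route requires care and the direct bijective argument outlined above is likely to be more transparent.
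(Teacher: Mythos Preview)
Your proposal is correct and follows exactly the approach the paper takes: the paper's own ``proof'' consists of a single sentence stating that the argument is entirely analogous to that of Theorem~\ref{mainthm}, with particles now travelling up and to the right rather than up and to the left. Your sketch is in fact more detailed than what the paper provides, and the one minor slip (only SE carries a $q^t$ in the $\widetilde{V}$ weights, not SW or NS) is in your list of anticipated checks rather than in the argument itself.
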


The proof of Theorem \ref{altmainthm} is entirely analogous to that of Theorem \ref{mainthm}, viewing particles in the system as travelling up and right as opposed to up and left. It is perhaps helpful to think of this alternate model as adding $n$ ribbon strips onto $\mu$ to reach $\lambda$, whereas our original model removed $n$ ribbon strips from $\lambda$ to obtain $\mu$.

Notably, this interpretation allows us to stack our original model atop or below this new one and evaluate the results to give the sum sides of the generic Cauchy identity for super LLT polynomials. If the original model is all horizontal strip rows and the alternate all vertical strip rows (or vice versa), only finitely many columns are necessary to account for all possible partitions that could appear on the boundary between original and alternate models. However, if there is a mix of row types, we need to account for the fact that horizontal strip rows may be arbitrarily long (i.e., their skew partitions have arbitrarily large parts) and vertical strip rows may be arbitrarily tall (i.e., their partitions have arbitrarily many parts). We do this by taking inspiration from the Fock space: given infinitely many columns indexed in $\Z$ and a partition $\mu$, place particles (up arrows) on parts of $\mu + \rho$ as well as on non-positive columns. This method produces an infinite sea of particles to the left of the columns involved in $\mu + \rho$ and an infinite void of holes to the right, mimicking the particle-hole interaction of Hamiltonian operators on the Fock space.

\begin{defn}
Consider four ordered alphabets $A,A',B,B'$ together with total orderings $\prec_A$ on $A\cup A'$ and $\prec_B$ on $B\cup B'$ that respect the orderings on $A,A'$ and $B,B'$ respectively. A \emph{overarching total order} $I$ is a total ordering on $A,A',B,B'$ that respects the individual orderings on each alphabet as well as the intermediate total orderings $\prec_A$ and $\prec_B$. 

For example, let $I_A$ be the overarching total order where $a_i < b_j$ for any $a_i \in A \cup A', b_j \in B\cup B'$, and $I_B$ be the overarching total order where $a_i > b_j$ for all $a_i \in A \cup A', b_j \in B\cup B'$.
\end{defn}


\begin{defn}
Consider two pairs of totally ordered alphabets $A/A'$ and $B/B'$, with spectral parameters $X/Y$ and $W/Z$, respectively. Let $\mu, \nu$ be partitions, padded with zero parts to have the same length, and define an overarching total order $I$ on $A,A',B,B'$ that respects all existing orderings. We define the \emph{Cauchy system} $\mathcal{C}_{\mu,\nu, I}(X/Y, W/Z)$ to have the following boundary conditions and spectral parameters on a semi-infinite lattice:
\begin{itemize}
\item columns are labelled left to right with indices in $\Z$, and designating a fixed column 1,
\item edges on the top boundary are labelled by $\mu+\rho$ in the manner described above,
\item edges on the bottom boundary are labelled by $\nu+\rho$,
\item rows are labelled in increasing order from top to bottom according to the overarching total order $I$,
\item horizontal strip rows labelled by $A$ (respectively $B$) take weights $wt_H$ (respectively $wt_{\widetilde{H}}$) and spectral parameters $x_i$ (respectively $w_i$), and
\item vertical strip rows labelled by $A'$ (respectively $B'$) take weights $wt_V$ (respectively $wt_{\widetilde{V}}$) and spectral parameters $y_{i'}$ (respectively $z_{i'}$).
\item side boundary edges on $A/A'$ rows are labelled $>$ and those on $B/B'$ rows are labelled $<$.
\end{itemize}
\end{defn}

Here, we consider the spectral parameters as formal variables in order to remove issues of convergence of the partition function, since the semi-infinite model will produce infinitely many states for some choices of boundary conditions on the Cauchy model.

\begin{figure}
    \centering
    \begin{tikzpicture}
		\ribbonvertex{1.5}{1}{1}
		\ribbonvertex{1.5}{-1}{1}
		\ribbonvertex{2.5}{1}{1}
		\ribbonvertex{2.5}{-1}{1}
		\ribbonvertex{3.5}{1}{1}
		\ribbonvertex{3.5}{-1}{1}
		\ribbonvertex{5.5}{1}{1}
		\ribbonvertex{5.5}{-1}{1}
		\ribbonvertex{1.5}{3}{1}
		\ribbonvertex{2.5}{3}{1}
		\ribbonvertex{3.5}{3}{1}
		\ribbonvertex{5.5}{3}{1}
		\draw (3.5,4.25) ellipse (3cm and 0.2cm);
		\draw (3.5,-2.25) ellipse (3cm and 0.2cm);		
		\node () at (4.5,1) {$\cdots$};
		\node () at (4.5,-1) {$\cdots$};
		\node () at (4.5,3) {$\cdots$};

		\node () at (6.5,1) {$a_1$};
		\node () at (6.5,-1) {$a_{1'}$};
		\node () at (6.5,3) {$b_1$};
		\node () at (3.5,4.25) {$\mu + \rho$};
		\node () at (3.5,-2.25) {$\nu + \rho$};
		
        \arrowrightcolor{6}{1.55}{blue}
        \arrowrightcolor{6}{1.2}{blue}
        \path[fill=white] (6,0.95) circle (.2);
        \node () at (6, 0.95) {\vdots};
        \arrowrightcolor{6}{0.4}{blue}
        
        \arrowrightcolor{6}{-0.45}{blue}
        \arrowrightcolor{6}{-0.8}{blue}
        \path[fill=white] (6,-1.05) circle (.2);
        \node () at (6, -1.05) {\vdots};
        \arrowrightcolor{6}{-1.6}{blue}
        
         \arrowrightcolor{1}{1.6}{blue}
        \arrowrightcolor{1}{1.2}{blue}
        \path[fill=white] (1,0.95) circle (.2);
        \node () at (1, 0.95) {\vdots};
        \arrowrightcolor{1}{0.4}{blue}
        
        \arrowrightcolor{1}{-0.4}{blue}
        \arrowrightcolor{1}{-0.8}{blue}
        \path[fill=white] (1,-1.05) circle (.2);
        \node () at (1, -1.05) {\vdots};
        \arrowrightcolor{1}{-1.6}{blue}
        
        \arrowleftcolor{1}{3.6}{red}
        \arrowleftcolor{1}{3.2}{red}
        \path[fill=white] (1,2.75) ellipse (.15 and .15);
        \node () at (1, 2.95) {\vdots};
        \arrowleftcolor{1}{2.4}{red}

        \arrowleftcolor{6}{3.6}{red}
        \arrowleftcolor{6}{3.2}{red}
        \path[fill=white] (6,2.75) ellipse (.15 and .15);
        \node () at (6, 2.95) {\vdots};
        \arrowleftcolor{6}{2.4}{red}
\end{tikzpicture}
    \caption{An example of boundary conditions for the Cauchy lattice model, under the overarching order $b_1 < a_1 < a_{1'}$ for $A = \{a_1\},A' = \{a_{i'}\}, B = \{b_1\}$.}
    \label{fig:cauchybdry}
\end{figure}

\begin{prop}\label{cauchysides}
If we choose the overarching order $I_A$, then 
\[ Z(\mathcal{C}_{\mu,\nu,I_A}(X/Y, W/Z)) = \sum_{\lambda} \mathcal{G}_{\lambda/\mu}(X/Y;q) \cdot \mathcal{G}_{\lambda/\nu}(W/Z;q),\]
where the sum runs over all partitions $\lambda$.
If instead we choose the order $I_B$, then
\[ Z(\mathcal{C}_{\mu,\nu,I_B}(X/Y, W/Z)) = \sum_{\lambda} \mathcal{G}_{\mu/\lambda}(X/Y;q) \cdot \mathcal{G}_{\nu/\lambda}(W/Z;q),\]
where the sum runs over all $\lambda$.
\end{prop}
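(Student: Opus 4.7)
Proof proposal. The plan is a horizontal branching argument. Fix the overarching order $I_A$ (the argument for $I_B$ is symmetric). Cut the Cauchy lattice along a horizontal line separating the top block of $A/A'$ rows from the bottom block of $B/B'$ rows. Because the Boltzmann weight of any state factors as a product over vertices, and because the two blocks share only the edges along this cut, we may write
\[ Z(\mathcal{C}_{\mu,\nu,I_A}(X/Y,W/Z)) \;=\; \sum_{\sigma} Z_{\text{top}}(\sigma) \cdot Z_{\text{bot}}(\sigma), \]
where $\sigma$ ranges over admissible labelings of the vertical edges on the slice, and $Z_{\text{top}}(\sigma)$, $Z_{\text{bot}}(\sigma)$ are the partition functions of the two half-models with the slice labels serving as an internal boundary.

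First I would show that every admissible $\sigma$ is indexed by a partition $\lambda$: the slice has $\wedge$ on all non-positive columns and on the columns indexed by the parts of $\lambda+\rho$, and $\vee$ elsewhere. This is a particle-conservation argument. In both halves, admissibility forces each vertex to have equal in- and out-degree, and the side boundaries prevent particles from exiting laterally. Thus the total (formal) count of up-arrows on the slice must match that on the top boundary (and on the bottom boundary), so the labels $\wedge$ along the slice appear precisely on some column-set of the form $\lambda+\rho$ together with all non-positive columns, for some partition $\lambda$. Any admissible state arises this way, with $\lambda$ determined by the slice.

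Next, for each such $\lambda$, I would identify the two half-models. The top block is an $A/A'$ rows-only model whose top boundary is $\mu+\rho$ (extended by the sea), whose slice boundary is $\lambda+\rho$ (extended by the sea), and whose right boundary is all $>$. Away from the support of $\lambda\cup\mu$, the non-positive columns of the sea force every vertex in those columns to have $\wedge$ on both vertical edges and $>$ on every straight edge, i.e.\ to be of type NW with weight $1$; similarly, all positive columns to the right of $\max(\lambda_1,\mu_1)+r$ have $\vee$ on both vertical edges and contribute only weight-$1$ vertices. Hence $Z_{\text{top}}(\sigma)$ collapses to the partition function of $\mathcal{B}_{\lambda/\mu}(X/Y)$, which is $\mathcal{G}_{\lambda/\mu}(X/Y;q)$ by Theorem \ref{mainthm}. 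Mirroring this analysis on the bottom block, using the alternate weights of Figure \ref{fig:CauchyWts}, the left-pointing side boundaries, and Theorem \ref{altmainthm}, gives $Z_{\text{bot}}(\sigma)=\mathcal{G}_{\lambda/\nu}(W/Z;q)$. Summing over $\lambda$ yields the first identity, and the $I_B$ case follows by the same argument with the roles of the two halves (and the two alternate models) reversed, so that the top portion produces $\mathcal{G}_{\mu/\lambda}(W/Z;q)$ and the bottom produces $\mathcal{G}_{\nu/\lambda}(X/Y;q)$.

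The principal obstacle is the bookkeeping around the semi-infinite extension: one needs to verify carefully that the infinite sea on the non-positive columns and the infinite void far to the right contribute only weight-$1$ vertices in both the original and alternate weight schemes, so that the half-models really do reduce to the finite models of Theorems \ref{mainthm} and \ref{altmainthm}. Once this stabilization is pinned down, the branching decomposition and the two theorems combine to give the claim, with no convergence concerns since the spectral parameters are formal.
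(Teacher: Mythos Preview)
Your approach is essentially identical to the paper's: slice between the two blocks, index the intermediate boundary by a partition $\lambda$, and apply Theorems~\ref{mainthm} and~\ref{altmainthm} to each half, with the semi-infinite bookkeeping handled by observing that far-away columns contribute only weight-$1$ vertices (the paper phrases this equivalently as shifting the designated ``1'' column left to accommodate all travelling particles, which amounts to padding $\mu,\nu$ with zero parts). One small correction: the sea vertices with $\wedge$ on both vertical edges and $>$ on the horizontal edges are type SW, not NW, but since $s=0$ there the weight is still $1$, so your conclusion stands.
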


\begin{proof}
For the first case, the overarching order places the $A$ and $A'$ rows above the $B$ and $B'$ rows. Slicing along the lattice model in between these two pieces, the cut edge is labelled by $\lambda + \rho$ for some partition $\lambda$. Note: it is possible that $\lambda$ has more parts than $\mu$ or $\nu$, in which case some of the particles travelling to fill parts of $\lambda + \rho$ will be coming from the $-\infty,...,0$ columns.  Since the partition function does not depend on the column numbers at all, shifting the designated ``1" column left to accommodate all travelling particles results in the same partition function and is equivalent to padding $\mu,\nu$ with zero parts to have the same number of parts as $\lambda$. The top half then has boundary conditions $B_{\lambda/\mu}(X/Y)$ and gives $\mathcal{G}_{\lambda/\mu}(X/Y;q)$, whereas the bottom half has boundary conditions $\widetilde{B}_{\lambda/\nu}(W/Z)$ and gives $\mathcal{G}_{\lambda/\nu}(W/Z)$. The second case follows similarly, with the additional note that any partition $\lambda$ such that $\mu/\lambda $ and $\nu/\lambda$ are both skew shapes will have at most the same number of parts as $\mu,\nu$. If $\lambda \not\subset \mu$ or $\lambda\not\subset\nu$ then one of the super LLT polynomials in the sum will be zero, i.e. there will be no non-zero filling of the lattice model, so we may sum over all partitions $\lambda$.
\end{proof}

\begin{theorem}\label{CauchymixedYBE}
Along with the $R$-vertex weights in Figure \ref{fig:CauchyRwts} and Appendix \ref{restofCauchyRwts}, the $H,\widetilde{H}, V$ and $\widetilde{V}$ weights satisfy mixed YBEs in all combinations.
\end{theorem}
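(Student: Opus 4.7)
The plan is to follow the proof template of Theorem~\ref{thm:horizontalYBE} (star--triangle enumeration plus fusion), while exploiting a reflection symmetry to cut down the number of cases requiring fresh work. First, I would observe that replacing every horizontal-edge label by its opposite ($<\leftrightarrow\,>$) and reflecting the lattice across a vertical axis sends the weights of Figures~\ref{fig:RibbonWts} and~\ref{fig:columnRibbonWts} to those of Figure~\ref{fig:CauchyWts}. This involution is compatible with the $R^{(n)}$-vertex (it interchanges the two sides of~\eqref{YBE} and the roles of $\calL$ and $\calR$), so each of the three ``same-direction'' identities $\widetilde H\widetilde H$, $\widetilde V\widetilde V$, and $\widetilde H\widetilde V$ is immediate from Theorems~\ref{thm:horizontalYBE}, \ref{thm:verticalYBE}, and \ref{thm:mixedYBE} respectively, once one checks that the $R^{(n)}$-weights in Figure~\ref{fig:CauchyRwts} are the expected images of the $R$-weights in Figure~\ref{fig:verticalRwts} under this involution.

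The remaining cases --- $H\widetilde H$, $H\widetilde V$, $V\widetilde H$, $V\widetilde V$, together with their reverses --- have particles traveling in opposite horizontal directions in the two rows of~\eqref{eq:startriangle}, and must be verified directly. For each such pair, I would proceed in three steps. First, enumerate the admissible choices of $(a,b_n,c_n,\alpha,\beta_1,\gamma_1)$ at the six ``exposed'' edges of the star--triangle diagram; because the side boundaries on the two strands of the $R^{(n)}$-vertex are now mismatched ($>$ on one strand and $<$ on the other), the admissible configurations differ from those tabulated in Appendices~\ref{sec:rybe}--\ref{sec:rybeC}, but there are still only finitely many. Second, treat the inner $n-1$ pieces of each $R^{(n)}$-vertex as a fusion block, and show, exactly as in Theorem~\ref{thm:horizontalYBE}, that the label identification $\calL_{t+1}=\calR_t$ continues to hold, so that the block's weight factors out uniformly across states once any interaction with S-type vertices is tracked. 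Third, verify the reduced underlying $R^{(1)}$ star--triangle identity, which becomes a direct algebraic check on the weights of Figure~\ref{fig:CauchyRwts}.

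I expect the hard part to be the third step in the cases $H\widetilde H$ and $V\widetilde V$, where the two rows carry the same \emph{type} of spectral parameter but with reversed particle flow. There, an S-vertex in the underlying $R^{(1)}$ may interact with S-vertices inside the block in a telescoping pattern different from Cases~4 and~5 of Theorem~\ref{thm:horizontalYBE}: one expects the collapse
\[
\prod_{k=0}^{\#(S)} (x_i - q^{-2k}w_j) \;=\; (x_i - q^{-2\#(S)}w_j)\prod_{k=0}^{\#(S)-1} (x_i - q^{-2k}w_j),
\]
but with the precise power of $q$ dictated by the mismatched direction of the two rows. Pinning down the correct normalization in this identity is precisely what forces the specific form (with shifts analogous to the $q^{2(n-1)}$ appearing in the $HV$ row of Figure~\ref{fig:verticalRwts}) of the $R^{(1)}$-weights listed in Appendix~\ref{restofCauchyRwts}. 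Once those powers are correctly calibrated, each remaining boundary condition reduces to a short polynomial identity in $x_i, w_j$ (or $y_i, z_j$, etc.) which one can check one case at a time, organized by the number of states on each side of the star--triangle in exact parallel with the five-case analysis of Theorem~\ref{thm:horizontalYBE}.
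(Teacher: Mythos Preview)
Your overall strategy---reduce to the star--triangle picture of~\eqref{eq:startriangle}, treat the shared $n-1$ pieces of $\calL$ and $\calR$ as a block, and then chase powers of $q$ across a finite case analysis---is exactly what the paper has in mind; its own proof simply says ``analogous to Theorems~\ref{thm:horizontalYBE}--\ref{thm:mixedYBE}'' and omits details. The reflection symmetry you invoke for the $\widetilde H\widetilde H$, $\widetilde V\widetilde V$, $\widetilde H\widetilde V$ cases is a clean shortcut the paper does not spell out.

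There is, however, a genuine structural feature of the mixed original/alternate cases that your plan does not anticipate. You write that the block's weight ``factors out uniformly across states once any interaction with S-type vertices is tracked,'' expecting a telescoping in the S-vertices analogous to Cases~4--5 of Theorem~\ref{thm:horizontalYBE}. In the Cauchy YBEs this is not what happens: as the paper explains immediately after the theorem, the $R^{(n)}$-weight is \emph{not} the ordinary product of $R^{(1)}$-weights. Instead, for type $\widetilde A B$ (resp.\ $A\widetilde B$) vertices, the first NN appearing above (resp.\ below) the first SS pre-fuses into a single ``NN/SS'' vertex whose weight is the trinomial listed in Figure~\ref{fig:CauchyRwts} and Appendix~\ref{restofCauchyRwts}, while the remaining NN and SS pieces keep their usual weights. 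So the block does not carry a uniform weight across the two sides of~\eqref{eq:startriangle}; the case analysis must track whether this pre-fusion occurs and, if so, where. The telescoping you describe for S-vertices does not capture this, and without it the reduced $R^{(1)}$ identity you plan to check in step three will not balance---the extra additive term in the NN/SS weight (e.g.\ $q^{3\tau}-q^{3\tau+2\#W}xy-q^{\tau}$ in the $\widetilde V H$ table) is precisely what absorbs the discrepancy.
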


\begin{proof}
These proofs are analogous to those of Theorems \ref{thm:horizontalYBE}, \ref{thm:verticalYBE}, and \ref{thm:mixedYBE} so we omit them for length. 
\end{proof}

As in the Yang-Baxter equation for the original super model, similar factors of $q$ appear on many of these vertices: these factors adjust for the fact that in each of our models, there is one vertex with a different spin than the others. For an $R^{(1)}$ vertex $r_k$, set 
\begin{align*}
\tau &= \#\{r_t =SS | t>k\} + \#\{r_t = NN | t<k\}, \text{ i.e.}\\ &=\# SS \text{ below} + \# NN \text{ above }, \text{ and }\\
\kappa &= \#\{r_t =SS | t<k\} + \#\{r_t = NN | t>k\}, \text{ i.e.}\\
&=\# SS \text{ above} + \# NN \text{ below }.
\end{align*} 
If we equidistributed the spins across the Boltzmann weights (for instance, so all vertices in the $\widetilde{H}$ model garnered a power of $q^t$, including type NE), this power appear in every piece of the  $R^{(n)}$ vertex as $q^{c\tau}$ for Cauchy model YBEs of type $\widetilde{A}B$ and as $q^{c\kappa}$ for those of type $A\widetilde{B}$, where $c\in \{0,1,2\}$ is the number of horizontal strip rows of either model interacting in the YBE. However, this modification would remove all interesting information coming from the tableau spin statistic in the individual models, so we will simply note that it is unsurprising to see this factor show up on so many of our vertices. Also note that this is nearly the same as the power $\sigma$ appearing in Section \ref{sec:solvability}, as  $\sigma = \tau + \#\{r_t = W\}$, and as in that section, each Cauchy model YBE contains one vertex type with binomial weight. 

\emph{However}, in the case of the Cauchy model YBEs, rather than the standard fusion of multiplying the $R^{(1)}$ weights together to obtain the $R^{(n)}$ weight, as described in Section \ref{sec:solvability}, a curious ``pre-fusion" of two vertices occurs. For Cauchy model YBEs of type $\widetilde{A}B$, if the $R^{(n)}$ vertex contains at least one type $NN$ vertex appearing above at least one type $SS$ vertex, the first instances of each of these vertices combine into a ``type NN/SS" vertex, whose weight is not merely wt(NN)wt(SS), while any remaining NN and SS vertices retain their usual weight. Fusion into the $R^{(n)}$ vertex weight then proceeds by multiplication, using this new vertex. If this condition does not occur, for instance if all type SS vertices appear above any type NN vertex, the normal fusion process occurs.
Similarly, for Cauchy model YBEs of type $A\widetilde{B}$, this ``pre-fusion" combines the first instance of a type SS vertex with the first NN vertex below it to create ``type SS/NN" vertex.

\begin{rmk}
Unlike the R-vertices considered in Section \ref{sec:solvability}, whose weights arose from fusion on tensor products of quantum group modules, this set of mixed R-vertices is fascinating because \emph{there is no currently known quantum algebraic object that corresponds to this sort of ``pre-fusion" procedure.}
\end{rmk}

\begin{figure}[h]
\begin{center}
\begin{tabular}{|c|c|c|}
\hline \multicolumn{3}{|c|}{Type $\widetilde{V}H$ weights}\\
\hline Label & $r_k$ & Weight\\
\hline W& \scalebox{0.7}{\west} &$(1-q^{2 \# W \text{ above}}xy)q^{\tau}$\\
\hline S& \scalebox{0.7}{\south} & $1$\\
\hline SS*& \scalebox{0.7}{\southsouth} & $-xy\cdot q^{\# SS \text{ below } +\#W}$\\
\hline NN*& \scalebox{0.7}{\northnorth} &$q^{\# NN\text{ above } + \#W}$\\
\hline N& \scalebox{0.7}{\north} & $-q^{n-1-\#S}$\\
\hline E& \scalebox{0.7}{\east} & $q^{\tau + \#W}$\\
\hline\hline$\displaystyle{\frac{NN}{SS}}$& & $q^{3\tau} - q^{3\tau + 2\#W}xy - q^{\tau}$\\\hline
\end{tabular}
\end{center}
\caption{The weights for the $k$-th $R^{(1)}$-vertex in an $R^{(n)}$-vertex swapping rows $i,j$, where tilde vertical strip row $i$ is in the alternate model and horizontal strip row $j$ is in the original model, as well as the ``pre-fusion" weight of the NN/SS vertex, if one arises.}\label{fig:CauchyRwts}
\end{figure}

In the spirit of previous lattice model proofs for Cauchy identities, such as that for factorial Schur functions given in \cite{BMN}, we attach a braid of $R^{(n)}$ vertices to one side of the Cauchy model, swapping all the rows of the alternate model past those of the original model (see Figure \ref{cauchycartoon}). We may also consider the partition function obtained by affixing this braid on the other side of the model: in a finite model, these partition functions would be equal due to the Yang-Baxter equation. Here, however, on each side, we need to consider whether we may truncate to a finite model with the same partition function by removing infinite swathes of columns with weight one. It would also be interesting to consider whether there is a generalization of the ``infinite source" style of model used in \cite{BMP} that would apply to superLLT polynomials, in effect collecting all non-positive columns into one column on which infinitely many particles are allowed to travel.

The situation is further complicated by the fact that the two types of R-vertices that ``pre-fuse" (types SS and NN) do appear in the Cauchy braids for either side if a particle is travelling through a sufficiently large braid. If the $R^{(n)}$ vertices fused completely, it would be possible to compute the partition function on each of the $n$ copies of the braid individually and multiply the results together. However, since this is not the case, calculating the partition function for the braid involves careful bookkeeping of all possible combinations of vertex types on each copy of the braid.

\begin{lemma}\label{particlesinbraid}
Thinking of a Cauchy braid of mixed YBEs of either type $\widetilde{A}B$ or type $A\widetilde{B}$ in terms of particles passing through the lattice from bottom to top, the number of particles in the system must remain constant. That is, for the $n$-strand Cauchy braid given by
\[\begin{tikzpicture}[scale=0.8]
\draw [red](2,3.25) -- (4,1.75);
\draw[red] (2,3.1) -- (4,1.6);
\draw [blue](2,2.75) -- (4,1.25);
\draw[blue] (2,2.6) -- (4,1.1);
\draw[Green](2,1.75) -- (4,3.25);
\draw[Green] (2,1.6) -- (4,3.1);
\draw (2,1.25) -- (4,2.75);
\draw (2,1.1) -- (4,2.6);
\draw (4.2,3) ellipse (0.2cm and 0.7cm);		
\node () at (4.2,3) {$\boldsymbol{\beta}$};
\draw (4.2,1.5) ellipse (0.2cm and 0.7cm);		
\node () at (4.2,1.5) {$\boldsymbol{\gamma}$};
\draw (1.8,3) ellipse (0.2cm and 0.7cm);		
\node () at (1.8,3) {$\boldsymbol{b}$};
\draw (1.8,1.5) ellipse (0.2cm and 0.7cm);		
\node () at (1.8,1.5) {$\boldsymbol{c}$};
\end{tikzpicture}\]
we must have $\#\{\text{particles entering from }\boldsymbol{c} \text{ or }\boldsymbol{\gamma}\} = \#\{\text{particles exiting from }\boldsymbol{b} \text{ or }\boldsymbol{\beta}\}$.
\end{lemma}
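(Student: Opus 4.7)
The plan is to prove this by establishing a local particle-conservation law at each individual $R^{(1)}$-vertex in the braid, and then propagating it globally by a standard telescoping/boundary-cancellation argument.

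First, I would fix a uniform notion of ``particle'' on every edge of the braid. On the strands running from lower-left to upper-right (whose boundaries carry the tuples $\boldsymbol{c}$ and $\boldsymbol{\beta}$) and on the strands running from upper-left to lower-right (whose boundaries carry $\boldsymbol{b}$ and $\boldsymbol{\gamma}$), each edge carries a binary label, and I would declare ``particle'' to be the label inherited from the row-type conventions set up in Sections~\ref{sec:latticemodel} and~\ref{sec:cauchyish} (so that the meaning of ``particle'' on a $\widetilde{H}$ or $\widetilde{V}$ strand matches the rightward/upward arrow convention used in Figure~\ref{fig:CauchyWts}, and similarly for the original-model strands). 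This pins down unambiguously how many particles sit on each of the four edges at every $R^{(1)}$-vertex in the braid.

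Next, I would run through the admissible $R^{(1)}$-vertex types appearing in Figure~\ref{fig:CauchyRwts} (and its companion appendix), namely W, S, SS, NN, N, E, together with the ``pre-fused'' vertex $\mathrm{NN}/\mathrm{SS}$, and verify directly that for each type the number of particles on the two southern edges (SW, SE) equals the number of particles on the two northern edges (NW, NE). This is a finite and mechanical ice-rule check; the pre-fused vertex requires no extra work, since its edge configuration is simply the superposition of an NN and an SS vertex, and the count is additive. The upshot is the local conservation law
\[
\#\{\text{particles on SW}(v)\} + \#\{\text{particles on SE}(v)\} \;=\; \#\{\text{particles on NW}(v)\} + \#\{\text{particles on NE}(v)\}
\]
at every vertex $v$ in the braid.

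Finally, I would sum this identity over all $R^{(1)}$-vertices in the $n$-strand Cauchy braid. Every internal edge of the braid is a northern edge of exactly one vertex and a southern edge of exactly one other, so its contribution cancels between the two sides of the sum. What survives on each side is purely boundary: on the bottom, the edges making up $\boldsymbol{c}$ and $\boldsymbol{\gamma}$; on the top, the edges making up $\boldsymbol{b}$ and $\boldsymbol{\beta}$. This yields exactly the stated equality of particle counts.

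The main obstacle is really just the bookkeeping of the first paragraph: aligning the particle convention across strands of different provenance (original-model rows versus alternate-model rows, horizontal-strip versus vertical-strip), so that the local check in the middle step reads uniformly at every crossing. Once the conventions are consistent, the vertex-by-vertex verification is a short enumeration and the global conclusion follows immediately from cancellation of internal edges.
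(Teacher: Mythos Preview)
Your proposal is correct and follows essentially the same approach as the paper's proof: fix the particle convention on each strand type (left arrows carry particles on original-model strands, right arrows on alternate-model strands), verify the local in-equals-out conservation at each of the six admissible $R^{(1)}$-vertex types, and conclude global conservation for the braid. The paper presents this more tersely, simply displaying the six colored vertex types and asserting that each satisfies the condition, without spelling out the telescoping step; your version is more explicit but structurally identical.
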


\begin{proof}
Considering first the case of type $\widetilde{A}B$, color the six diagonal vertices with particles according to the rule established for rows: particles travel along left arrows on original model (i.e. $B$) strands and right arrows on alternate model (i.e. $\widetilde{A}$) strands.
\[
\begin{tabular}{|c|c|c|c|c|c|}
\hline W&S&SS&NN&N&E\\ 
\hline \scalebox{1}{\westp} &\scalebox{1}{\southp}& \scalebox{1}{\southsouthp} &  \scalebox{1}{\northnorthp} & \scalebox{1}{\northp} & \scalebox{1}{\eastp}\\
\hline
\end{tabular}
\]
Since each individual vertex obeys the desired condition, a Cauchy braid built out of admissible vertices will as well. The statement follows similarly for type $A\widetilde{B}$.
\end{proof}

\begin{figure}
    \centering
\begin{tikzpicture}[scale = 1]
\draw (0,0.75) -- (2,0.75) -- (2,3.5) -- (0,3.5) -- cycle;
\draw [Green](0,3.25) -- (-2,1.75);
\draw[Green] (0,3.1) -- (-2,1.6);
\draw (0,2.75) -- (-2,1.25);
\draw (0,2.6) -- (-2,1.1);
\draw[red](0,1.75) -- (-2,3.25);
\draw[red] (0,1.6) -- (-2,3.1);
\draw [blue](0,1.25) -- (-2,2.75);
\draw[blue] (0,1.1) -- (-2,2.6);
\draw[dotted] (0,2.25) -- (2,2.25);
\draw (-2.2,3) ellipse (0.2cm and 0.7cm);		
\node () at (-2.2,3) {$\boldsymbol{b}$};
\draw (-2.2,1.5) ellipse (0.2cm and 0.7cm);		
\node () at (-2.2,1.5) {$\boldsymbol{c}$};
\node() at (1,1.5) {\text{alternate}};
\node() at (1,2.875) {\text{original}};
\end{tikzpicture}   
\hspace{0.5cm}
\begin{tikzpicture}[scale=1]
\draw (0,0.75) -- (2,0.75) -- (2,3.5) -- (0,3.5) -- cycle;
\draw [red](2,3.25) -- (4,1.75);
\draw[red] (2,3.1) -- (4,1.6);
\draw [blue](2,2.75) -- (4,1.25);
\draw[blue] (2,2.6) -- (4,1.1);
\draw[Green](2,1.75) -- (4,3.25);
\draw[Green] (2,1.6) -- (4,3.1);
\draw (2,1.25) -- (4,2.75);
\draw (2,1.1) -- (4,2.6);
\draw[dotted] (0,2.25) -- (2,2.25);
\node() at (1,2.875) {\text{alternate}};
\node() at (1,1.5) {\text{original}};
\draw (4.2,3) ellipse (0.2cm and 0.7cm);		
\node () at (4.2,3) {$\boldsymbol{\beta}$};
\draw (4.2,1.5) ellipse (0.2cm and 0.7cm);		
\node () at (4.2,1.5) {$\boldsymbol{\gamma}$};
\end{tikzpicture}

\caption{Braids of $R^{(n)}$ vertices attached to the Cauchy model.}
\label{cauchycartoon}
\end{figure}

\begin{conj}\label{infside}
Set $\boldsymbol{\beta},\boldsymbol{c} = $ ``right" and $\boldsymbol{\gamma},\boldsymbol{b} =$ ``left".  Attaching a braid to the right hand side of the Cauchy model with ordering $I_B$ (as in the right hand figure of Figure \ref{cauchycartoon}) gives the partition function
\[\prod_{i,j,k,\ell} \prod_{t = 0}^{n-1}\frac{(1- q^{2t}x_iz_\ell)(1-q^{2t}y_jw_k)}{(1-q^{2t}x_iw_k)} \cdot \sum_\lambda \mathcal{G}_{\mu/\lambda}(X/Y;q) \cdot \mathcal{G}_{\nu/\lambda}(W/Z;q),\]
On the other hand, attaching a braid to the left hand side of the Cauchy model with ordering $I_A$ gives the partition function
\[\left(\prod_{j,\ell} \prod_{t = 0}^{n-1} (1-q^{2t}y_jz_\ell) . \sum_{\lambda} \mathcal{G}_{\lambda/\mu}(X/Y;q) \cdot \mathcal{G}_{\lambda/\nu}(W/Z;q)\right) + O(Y^\infty)O(Z^\infty),\]
where the notation $O(Y^\infty)$ denotes terms with infinitely many powers of spectral parameters in the set $Y$.
\end{conj}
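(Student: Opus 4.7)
The plan is to evaluate each partition function on the left- and right-hand sides of the conjecture by direct state enumeration, exploiting the fact that a Cauchy model with a braid attached decomposes (along the internal edges where the braid meets the bulk lattice) into the Cauchy subsystem---whose partition function is given by Proposition \ref{cauchysides}---and the braid itself. By Theorem \ref{CauchymixedYBE}, the two configurations in Conjecture \ref{infside} are YBE-equivalent, so taken together these two identities would be equivalent to Corollary \ref{cor:cauchy} in the $n$-core case. Consequently, I aim to verify each side independently by computing the braid partition function and then factoring it off.

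For the right-side braid attached to the $I_B$-ordered model, the far-right boundary labels are completely forced by the Cauchy boundary conditions, and Lemma \ref{particlesinbraid} constrains admissible states by conserving the number of particles entering from $\boldsymbol{c},\boldsymbol{\gamma}$ and exiting through $\boldsymbol{b},\boldsymbol{\beta}$. Because the original-model rows send particles strictly rightward (through the braid) while the alternate rows send them leftward (into the braid), the number of admissible states per strand crossing is finite, and a geometric-series summation (used to absorb the binomial weights of the $NN$/$SS$ pre-fusion in each $\widetilde{H}H$ crossing) yields the factor $\prod_{t=0}^{n-1}(1-q^{2t}x_iw_k)^{-1}$. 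The crossings of type $\widetilde{V}H$, $V\widetilde{H}$, and $\widetilde{V}V$ contribute the numerators $(1-q^{2t}x_iz_\ell)$, $(1-q^{2t}y_jw_k)$, and $1$ respectively. Multiplying over all row pairs and applying Proposition \ref{cauchysides} produces the claimed expression.

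For the left-side braid attached to the $I_A$-ordered model, the approach is parallel, but one must contend with the infinite sea of particles living on the non-positive columns. Finite-support configurations in the braid, where each particle crossing originates in a $\mu+\rho$ or $\nu+\rho$ boundary column, give the $\widetilde{V}V$ contribution $\prod_{t,j,\ell}(1-q^{2t}y_jz_\ell)$; the $\widetilde{H}H$ crossings contribute trivially in this regime since no particles pass through them. Any state with a particle sourced from the infinite sea must cross the braid along a $V$-type or $\widetilde V$-type strand at least once, so it contributes at least one factor of $y_j$ and at least one factor of $z_\ell$; configurations with unboundedly many such particles produce terms of arbitrarily high degree in both $Y$ and $Z$, which collect into the remainder $O(Y^\infty)O(Z^\infty)$.

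The hard part will be controlling the $NN$/$SS$ pre-fusion, which obstructs the naive factorization of an $R^{(n)}$-vertex into a product of $R^{(1)}$-weights precisely when an $NN$ sits above an $SS$ within the same strand. I expect that careful bookkeeping of the mixed vertex types in each braid crossing---organized so that geometric series in pairs $(x_iw_k)$ and $(y_jz_\ell)$ collapse the sum over the possible numbers of $NN$/$SS$ pre-fusion events---will telescope into the stated product. A secondary difficulty is making rigorous sense of the error $O(Y^\infty)O(Z^\infty)$ as a formal series: one must check that the remainder genuinely has no contributions of finite total degree in $Y$ or in $Z$, which should follow from verifying that each infinite-sea particle necessarily crosses an edge weighted by some $y_j$ and some $z_\ell$ before it can exit.
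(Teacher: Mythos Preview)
This statement is a \emph{conjecture} in the paper, not a theorem. The paper only proves the second displayed identity (the $I_A$ side with the error term) and explicitly leaves the first open, saying ``the first claim is the more difficult, because it will require precisely evaluating the Cauchy braid in circumstances where there may be particles travelling through the braid \ldots\ taking into account the possibility of pre-fusion as well.'' Your proposal for the $I_B$ side is a plan, not a proof: you assert that a geometric-series summation over $NN/SS$ pre-fusion events telescopes to $\prod_t(1-q^{2t}x_iw_k)^{-1}$, but you also acknowledge this is ``the hard part'' and only say you ``expect'' it to work. That is exactly the computation the paper declines to attempt, and your sketch does not supply the missing bookkeeping. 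In particular, when particles loop through the braid, the pre-fusion weight of an $NN/SS$ pair is not multiplicative in the individual $R^{(1)}$ pieces, so the geometric-series heuristic does not apply strand by strand.

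For the $I_A$ side your argument for the error term is also not the paper's, and it is weaker. You argue that a particle sourced from the infinite sea picks up ``at least one factor of $y_j$ and at least one factor of $z_\ell$,'' and that ``unboundedly many such particles produce terms of arbitrarily high degree.'' But $O(Y^\infty)O(Z^\infty)$ means each such term has \emph{infinite} degree in $Y$ and in $Z$, not merely that there are terms of arbitrarily large finite degree. The paper's argument is different: a single looping particle must traverse every non-positive column before it can climb from the alternate block to the original block; since horizontal-strip rows assign weight $0$ to the relevant $SE$/$SW$ vertices, infinitely many of those column crossings must occur in vertical-strip rows, each contributing a factor of $y$ (alternate side) or $z$ (original side). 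That forces infinite degree per state, not just unbounded degree across states. Finally, your opening claim that the two configurations are YBE-equivalent via Theorem~\ref{CauchymixedYBE} is exactly what the paper flags as problematic in the semi-infinite setting: the train argument requires truncation to a finite model, and whether that truncation preserves the partition function is part of what is at issue.
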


\begin{rmk}
Setting $\boldsymbol{\beta},\boldsymbol{c} = $ ``left" and $\boldsymbol{\gamma},\boldsymbol{b} =$ ``right" and swapping sides on which the braid attaches (i.e. using type $\widetilde{A}B$ instead of type $A\widetilde{B}$ R-vertices) produces an analogous result.
\end{rmk}

The latter statement is actually fairly straightforward to prove: if no particles pass through the Cauchy braid, all its vertices are of type N, so it has weight $\prod_{j,\ell} \prod_{t = 0}^{n-1} (1-q^{2t}y_jz_\ell)$ and the remaining piece of the lattice has boundary conditions  $\mathcal{C}_{\mu,\nu,I_B}$, so Proposition \ref{cauchysides} provides the rest of that term. On the other hand, suppose a particle enters the Cauchy braid: since all of the particles entering from boundary conditions are required to travel away from the braid in the alternate model, we may think of this particle as originating in the braid, travelling right through the alternate model, up into the original model, then back left into the braid to create a loop. Since columns can only carry one particle at a time, this particle must travel through all non-positive columns in the alternate model before it can travel up into the original; however, horizontal strip rows have weight 0 on SE (respectively SW) vertices in the original (respectively alternate) model, so infinitely many of these steps must occur in vertical strip rows, amassing a weight of $O(Y^\infty)$ in the alternate model section and $O(Z^\infty)$ in the original model section.

The first claim is the more difficult, because it will require precisely evaluating the Cauchy braid in circumstances where there may be particles travelling through the braid (in fact, looping as in the previous case), taking into account the possibility of pre-fusion as well. Excepting this difficulty, notice that in any state, cutting off the loops and considering the remaining boundary conditions, we see that they match $\mathcal{C}_{\mu,\nu,I_A}$, so again Proposition \ref{cauchysides} gives the desired sum over superLLT polynomials. 

\begin{rmk}
For circumstances where we only need finitely many columns to represent all possible $\lambda$ that could appear on the boundary (i.e., when the original model is all horizontal strip rows and the alternate all vertical strip rows, or vice versa), the error terms in $O(Y^\infty)O(Z^\infty)$ appearing in Conjecture \ref{infside} are zero, so truncating this Cauchy lattice model recovers precisely the dual Cauchy identity for LLT polynomials discussed in Section \ref{sec:heisenberg}. 
\end{rmk}

\bibliographystyle{alpha}
\bibliography{main.bib}

\begin{appendices}
\section{Reduced Yang-Baxter Equation}\label{sec:rybe}
\setcounter{equation}{0}
\numberwithin{equation}{section}
After eliminating the $0$-weighted terms, the Yang-Baxter Equation can be simplified to the following system of equations.

\begin{center}
\begin{longtable}{cc}\begin{minipage}[c]{0.45\linewidth}{
\begin{equation}\label{ybe1}
\ybeleftn101101100\ =\ \yberightn101101001	
\end{equation}}\end{minipage}
& 
\begin{minipage}[c]{0.45\linewidth}{\begin{equation}\label{ybe2}
\ybeleftn101011101\ =\ \yberightn101011001	
\end{equation}}\end{minipage}
\\
\begin{minipage}[c]{0.45\linewidth}
\begin{equation}\label{ybe3}
\ybeleftn011101100\ =\ \yberightn011101101
\end{equation}
\end{minipage}
& 
\begin{minipage}[c]{0.45\linewidth}
\begin{equation}\label{ybe4}
\ybeleftn011011101\ =\ \yberightn011011101
\end{equation}
\end{minipage}
\\
\begin{minipage}[c]{0.45\linewidth}
\begin{equation}\label{ybe5}
\ybeleftn100100010\ =\ \yberightn100100010	
\end{equation}
\end{minipage} 
& 
\begin{minipage}[c]{0.45\linewidth}
\begin{equation}\label{ybe6}
\ybeleftn100010011\ =\ \yberightn100010010	
\end{equation}
\end{minipage}
\\
\begin{minipage}[c]{0.45\linewidth}
\begin{equation}\label{ybe7}
\ybeleftn010010011\ =\ \yberightn010010110	
\end{equation}
\end{minipage}
&
\begin{minipage}[c]{0.45\linewidth}
\begin{equation}\label{ybe8}
\ybeleftn010100010\ =\ \yberightn010100110	
\end{equation}
\end{minipage}
\\
\begin{minipage}[c]{0.45\linewidth}
\begin{equation}\label{ybe11}
\ybeleftn001001001\ =\ \yberightn001001100
\end{equation}
\end{minipage}
&
\begin{minipage}[c]{0.45\linewidth}
\begin{equation}\label{ybe14}
\ybeleftn000000000\ =\ \yberightn000000000
\end{equation}
\end{minipage}
\\
\multicolumn{2}{c}{
\begin{minipage}[c]{0.7\linewidth}
\begin{equation}\label{ybe9}
\ybeleftn100001100+\ybeleftn100001011\ =\ \yberightn100001000
\end{equation}
\end{minipage}}
\\
\multicolumn{2}{c}{
\begin{minipage}[c]{0.7\linewidth}
\begin{equation}\label{ybe12}
\ybeleftn010001100+\ybeleftn010001011\ =\ \yberightn010001100
\end{equation}
\end{minipage}}
\\
\multicolumn{2}{c}{
\begin{minipage}[c]{0.7\linewidth}
\begin{equation}\label{ybe10}
\ybeleftn001100000\ =\ \yberightn001100001+\yberightn001100110
\end{equation}
\end{minipage}}
\\
\multicolumn{2}{c}{
\begin{minipage}[c]{0.7\linewidth}
\begin{equation}\label{ybe13}
\ybeleftn001010001\ =\ \yberightn001010001+\yberightn001010110
\end{equation}
\end{minipage}}
\end{longtable}
\end{center}

\section{Reduced Yang-Baxter Equation for Column Weights}\label{sec:rybeC}
\setcounter{equation}{0}
\numberwithin{equation}{section}
When considering the YBE for the Vertical-Strip Weights, we replace boundary conditions A2,3,4,9,12, and 14, which all contain a NE vertex, with the following new boundary conditions, which were excluded from consideration before on account of containing a SE vertex. We also have two sets of boundary conditions from before (A6 and A8) which acquire an extra state: these are B6 and B8, respectively.

\begin{center}
\begin{longtable}{cc}\begin{minipage}[c]{0.45\linewidth}{
\begin{equation}\label{ybe1b}
\ybeleftn111111111\ =\ \yberightn111111111	
\end{equation}}\end{minipage}
& 
\begin{minipage}[c]{0.45\linewidth}{\begin{equation}\label{ybe2b}
\ybeleftn011110011\ =\ \yberightn011110111	
\end{equation}}\end{minipage}
\\
\begin{minipage}[c]{0.45\linewidth}{
\begin{equation}\label{ybe3b}
\ybeleftn110011111\ =\ \yberightn110011110	
\end{equation}}\end{minipage}
& 
\begin{minipage}[c]{0.45\linewidth}{\begin{equation}\label{ybe4b}
\ybeleftn110110110\ =\ \yberightn110110011	
\end{equation}}\end{minipage}
\\
\multicolumn{2}{c}{
\begin{minipage}[c]{0.7\linewidth}
\begin{equation}\label{ybe5b}
\ybeleftn101110011+\ybeleftn101110100\ =\ \yberightn101110011
\end{equation}
\end{minipage}}
\\
\multicolumn{2}{c}{
\begin{minipage}[c]{0.7\linewidth}
\begin{equation}\label{ybe6b}
\ybeleftn100010011 + \ybeleftn100010100\ =\ \yberightn100010010	
\end{equation}
\end{minipage}}
\\
\multicolumn{2}{c}{
\begin{minipage}[c]{0.7\linewidth}
\begin{equation}\label{ybe7b}
\ybeleftn110101110\ =\ \yberightn110101110+\yberightn110101001
\end{equation}
\end{minipage}}
\\
\multicolumn{2}{c}{
\begin{minipage}[c]{0.7\linewidth}
\begin{equation}\label{ybe8b}
\ybeleftn010100010\ =\ \yberightn010100110+\yberightn010100001
\end{equation}
\end{minipage}}
\end{longtable}
\end{center}

\section{$R^{(1)}$ Vertex Weights for Cauchy Mixed YBES}\label{restofCauchyRwts}

Let $\kappa = \#NN \text{ below } + \#SS\text{ above}$, and let $\tau = \#SS\text{ below } +\# NN\text{ above}$.

\begin{multicols}{2}

\begin{tabular}{|c|c|}
\hline \multicolumn{2}{|c|}{$\widetilde{H}H$}\\
\hline Type& Weight\\
\hline W& $q^\tau$\\
\hline S& $(1-q^{2n-2-2\#S \text{ below}}xy)$\\
\hline SS*& $q^{n-1-\#S}xy$\\
\hline NN*& $q^{n-1-\#S}$\\
\hline N& $-q^{n-1-\#S}$\\
\hline E& $q^{\tau - \#W}$\\
\hline$\frac{NN}{SS}$ & $-q^{2n-2}xy + q^{2\tau + 2\#S} - q^{2\#S}$\\\hline
\end{tabular}

\vspace{0.5cm}

\begin{tabular}{|c|c|}
\hline \multicolumn{2}{|c|}{$\widetilde{H}V$}\\
\hline Type& Weight\\
\hline W& $q^\tau$\\
\hline S& $1$\\
\hline SS*& $(-xy)q^{\# SS \text{ below} + \#E}$\\
\hline NN*& $q^{\# NN\text{ above} + \#E}$\\
\hline N& $-q^{n-1-\#S}$\\
\hline E& $(1-q^{2\#E \text{ below}}xy)q^{\tau + \#W}$\\
\hline$\frac{NN}{SS}$ & $q^{3\kappa}xy - q^{3\tau + 2\#E} - q^{\tau}$\\\hline
\end{tabular}

\vspace{0.5cm}

\begin{tabular}{|c|c|}
\hline \multicolumn{2}{|c|}{$\widetilde{V}H$}\\
\hline Type& Weight\\
\hline W& $(1-q^{2 \# W \text{ below}}xy)q^{\tau}$\\
\hline S& $1$\\
\hline SS*& $-xy\cdot q^{\# SS \text{ below } +\#W}$\\
\hline NN*& $q^{\# NN\text{ above } + \#W}$\\
\hline N& $-q^{n-1-\#S}$\\
\hline E& $q^{\tau + \#W}$\\
\hline$\frac{NN}{SS}$ & $q^{3\tau} - q^{3\tau + 2\#W}xy - q^{\tau}$\\\hline
\end{tabular}

\vspace{0.5cm}

\begin{tabular}{|c|c|}
\hline \multicolumn{2}{|c|}{$\widetilde{V}V$}\\
\hline Type& Weight\\
\hline W& $q^\tau$\\
\hline S& $1$\\
\hline SS*& $xy$\\
\hline NN*& $1$\\
\hline N& $(xy - q^{2\#N \text{ below}})q^{\#W + \#E}$\\
\hline E& $q^{\tau +\#W}$\\
\hline$\frac{NN}{SS}$ & $xy + q^{2\tau + 2\#N} - q^{2\#N}$\\\hline
\end{tabular}

\vspace{0.5cm}

\begin{tabular}{|c|c|}
\hline \multicolumn{2}{|c|}{$H\widetilde{H}$}\\
\hline Type& Weight\\
\hline W& $q^{-\tau}$\\
\hline S& $-q^{1-n+2\kappa}(q^{2n-2+2\#S \text{ below}}xy - 1)$\\
\hline SS*& $q^{n-1-\# NN \text{ above}}$\\
\hline NN*& $xy\cdot q^{n-1-\# SS\text{ below}}$\\
\hline N& $q^{\kappa - \tau + \#S}$\\
\hline E& $q^{-\tau - \#W}$\\
\hline$\frac{SS}{NN}$ & $\left(q^{2n-2 + 2\kappa}xy - q^{2\kappa - 2\#S} + q^{-2\#S}\right)\cdot q^{n-1-\tau}$\\\hline
\end{tabular}

\vspace{0.5cm}

\begin{tabular}{|c|c|}
\hline \multicolumn{2}{|c|}{$H\widetilde{V}$}\\
\hline Type& Weight\\
\hline W& $q^{\kappa - \#E}$\\
\hline S& $-q^{1-n+2\kappa + \#N}$\\
\hline SS*& $q^{\# SS \text{ above}}$\\
\hline NN*& $-xy\cdot q^{\# NN\text{ below}}$\\
\hline N& $q^{2\kappa}$\\
\hline E& $(1-q^{2n - 2 - 2 \# E \text{ below}}xy)q^{-\tau}$\\
\hline$\frac{SS}{NN}$ & $-q^{3\kappa}xy - q^{2(\#E - (n-1)) + 5\kappa} + q^{2(\#E - (n-1)) + 3\kappa)}$\\\hline
\end{tabular}

\vspace{0.5cm}

\begin{tabular}{|c|c|}
\hline \multicolumn{2}{|c|}{$V\widetilde{H}$}\\
\hline Type& Weight\\
\hline W& $(1-q^{2n - 2 - 2 \# W \text{ below}}xy)q^{-\tau}$\\
\hline S& $-q^{1-n+2\kappa + \#N}$\\
\hline SS*& $q^{\# SS \text{ above}}$\\
\hline NN*& $-xy\cdot q^{\# NN\text{ below}}$\\
\hline N& $q^{2\kappa}$\\
\hline E& $q^{\kappa - \#W}$\\
\hline$\frac{SS}{NN}$ & $-q^{3\kappa}xy - q^{2(\#W - (n-1)) + 5\kappa} + q^{2(\#W - (n-1)) + 3\kappa)}$\\\hline
\end{tabular}

\vspace{0.5cm}

\hspace{2cm} \begin{tabular}{|c|c|}
\hline \multicolumn{2}{|c|}{$V\widetilde{V}$}\\
\hline Type& Weight\\
\hline W& $q^{-\tau}$\\
\hline S& $-q^{1-n+\kappa + \#N - \tau}$\\
\hline SS*& $q^{-\# NN \text{ above}}$\\
\hline NN*& $xy\cdot q^{-\# SS\text{ below}}$\\
\hline N& $(1- q^{2\#N \text{ below}}xy)q^{2\kappa}$\\
\hline E& $q^{\tau - \#W}$\\
\hline$\frac{SS}{NN}$ & $\left(q^{2\kappa}xy - q^{2\kappa - 2\#N} + q^{-2\#N}\right)\cdot q^{-\tau} $\\\hline
\end{tabular}
\end{multicols}
\end{appendices}

\end{document}